\colorlet{inbox}{lightgray!20}
\colorlet{outbox}{lightgray!50}
  \tikzset{errorstyle/.style={thick,red,solid}}
  \tikzset{yrefstyle/.style={thick,black,dashed}}
  \tikzset{safetystyle/.style={thick,blue,dotted}}
  \tikzset{funnelstyle/.style={thick,blue,densely dotted}}
  \tikzset{funnelbackground/.style={black!20,opacity=0.5}}
  \tikzset{funneldstyle/.style={thin,blue,dashed}}
  \tikzset{qstyle/.style={green!50!black,ultra thick}}
  \tikzset{qhelpstyle/.style={green!50!black,very thin}}
  \tikzset{funnelfillstyle/.style={blue!20!white,opacity=0.8}}
  \tikzset{safetyfillstyle/.style={blue,opacity=0.1}}
  \tikzset{funnelthinfillstyle/.style={blue!5!white,opacity=0.8}}
  \tikzset{safetythinfillstyle/.style={blue!50!white,opacity=0.1}}
\newtheorem{thm}{Theorem}[section]
 \newtheorem{cor}[thm]{Corollary}
 \newtheorem{lem}[thm]{Lemma}
 \newtheorem{prop}[thm]{Proposition}
 \theoremstyle{definition}
 \newtheorem{defn}[thm]{Definition}
 \newtheorem{rem}[thm]{Remark}
 \numberwithin{equation}{section}
\newcommand{\cA}{\mathcal{A}}
\newcommand{\cB}{\mathcal{B}}
\newcommand{\cC}{\mathcal{C}}
\newcommand{\cD}{\mathcal{D}}
\newcommand{\cG}{\mathcal{G}}
\newcommand{\cI}{\mathcal{I}}
\newcommand{\cL}{\mathcal{L}}
\newcommand{\cN}{\mathcal{N}}
\newcommand{\cP}{\mathcal{P}}
\newcommand{\cR}{\mathcal{R}}
\newcommand{\cS}{\mathcal{S}}
\newcommand{\cU}{\mathcal{U}}
\newcommand{\cV}{\mathcal{V}}
\newcommand{\cW}{\mathcal{W}}
\newcommand{\cF}{\mathcal{F}}
\newcommand{\fB}{\mathfrak{B}}
\newcommand{\eps}{\varepsilon}
\newcommand{\satu}{\text{sat}_{\widehat u}}
\newcommand{\fT}{\mathbf{T}}
\newcommand{\fN}{\mathbf{N}}
\newcommand{\esup}{{\rm ess\,}\sup}
\newcommand{\cTT}{{\mathbb T}^{n,q}_h}
\newcommand{\cCC}{\cC([-h,\infty),\R^n)}
\newcommand{\cLL}{\cL_{\rm loc}^\infty(\R_{\ge 0},\R^q)}
\newcommand{\cNN}{{\mathbf N}^{p,q,m}}
\renewcommand{\Re}{\operatorname{Re}}
\renewcommand{\Im}{\operatorname{Im}}
\newcommand{\Skdef}{\langle\raisebox{0.5 ex}{.},\raisebox{0.5 ex}{.}\rangle}
\newcommand{\ds}{\displaystyle}
\newcommand{\fL}{{\mathfrak L}}
\newcommand{\half}{\tfrac{1}{2}}
\newcommand{\third}{\tfrac{1}{3}}
\newcommand{\ddt}{\tfrac{\text{\normalfont d}}{\text{\normalfont d}t}}
\newcommand{\N}{\mathbb{N}}
\newcommand{\Z}{\mathbb{Z}}
\newcommand{\R}{\mathbb{R}}
\newcommand{\Gl}{\mathbf{Gl}}
\DeclareMathOperator{\dist}{dist}
\DeclareMathOperator{\rk}{rk}
\DeclareMathOperator{\im}{im}
\newcommand{\mA}{\mathrm{A}}
\newcommand{\dd}{\textrm d}
\newcommand{\bxi}{{\boldsymbol \xi}}
\renewcommand{\imath}{\mathrm{i}}
\numberwithin{equation}{section}
\DeclareMathOperator{\ran}{ran}
\newcommand{\setdef}[2]{\left\{\ #1\ \left|\ \vphantom{#1} #2\ \right.\right\}}
\newcommand{\setd}[2]{\left\{\, #1 \left|\, \vphantom{#1} #2\right.\right\}}
\DeclareMathOperator{\sgn}{sgn}
\DeclareMathOperator{\loc}{loc}
\def\tb#1{\textcolor[rgb]{0.00,0.80,0.20}{#1}}
\def\ai#1{\textcolor[rgb]{0.00,0.00,1.0}{#1}}
\newenvironment{smallpmatrix}
{\left(\begin{smallmatrix}}
{\end{smallmatrix}\right)}
\newenvironment{smallbmatrix}
{\left[\begin{smallmatrix}}
{\end{smallmatrix}\right]}
\newcommand{\epr}{\color{red}}
\begin{document}

\begin{frontmatter}

\title{Funnel control -- a survey\tnoteref{funding}}

\tnotetext[funding]{T. Berger acknowledges funding by the Deutsche Forschungsgemeinschaft (DFG, German Research Foundation) -- Project-IDs 362536361, 471539468 and 524064985.}

\author[1]{Thomas Berger}\ead{thomas.berger@math.upb.de}
\author[2]{Achim Ilchmann}\ead{achim.ilchmann@tu-ilmenau.de}
\author[3]{Eugene~P.~Ryan}\ead{masepr@bath.ac.uk}
\address[1]{Institut f\"ur Mathematik, Universit\"at Paderborn, Warburger Str.~100, 33098~Paderborn, Germany}
\address[2]{Institut f\"ur Mathematik, Technische Universit\"{a}t Ilmenau, Weimarer Stra{\ss}e 25, 98693~Ilmenau, Germany}
\address[3]{Department of Mathematical Sciences, University of Bath, Bath BA2 7AY, UK}

\begin{keyword}
nonlinear systems;
adaptive control;
funnel control;
stabilization;
tracking.
\end{keyword}

\begin{abstract}
The methodology of funnel control was introduced in the early~2000s, and it has developed since then in many respects
 achieving  a level of mathematical maturity balanced by practical applications.
 Its fundamental tenet is the attainment of prescribed transient and asymptotic behaviour for continuous-time controlled dynamical processes
 encompassing  linear and nonlinear systems described by functional differential equations,  differential-algebraic systems, and
 partial differential equations.
Considered are classes of systems specified only by structural properties~--~such as  relative degree and stable internal dynamics.
Prespecified are: a funnel shaped through the choice of a function (absolutely continuous),
freely selected by the designer, and a class of (sufficiently smooth) reference signals.
The aim is to design a single `simple' feedback strategy (using only input and output information) ~-- called the \textit{funnel controller}~-- which, applied to any system of the given class and for any reference signal of the given class, achieves the \textit{funnel control objective}:
that is, the closed-loop system is well-posed in the sense that all signals (both internal and external) are bounded and globally defined,
 and~-- most importantly~-- the error between the system's output
and the reference signal evolves within the prespecified funnel.

The survey is organized as follows.  In the Introduction, the genesis of funnel control is outlined via the most simple class of systems: the linear prototype of
scalar, single-input, single-output systems.   Generalizing the prototype, there follows an exposition of diverse system classes (described by linear, nonlinear, functional, partial
 differential equations, and differential-algebraic equations) for which funnel control is feasible.
The structure and properties of funnel control -- in its various guises attuned to available output information --  are described and analysed.
Up to this point, the treatment is predicated on an implicit assumption that system inputs are unconstrained.  Ramifications of input constraints and their incorporation in the
funnel methodology are then discussed.  Finally, practical applications and implementations of funnel control are highlighted.
\end{abstract}
\end{frontmatter}

\section{Introduction}\label{Sec:intro}\setcounter{equation}{0}
%
\noindent
A fundamental question in systems and control theory is: ``To what extent does one need to know a dynamical process in order to influence benignly
its behaviour through choice of input?"  Imprecision is inevitable in mathematically modelling any such process~-- be it biological, economic, electrical, mechanical, social, or
any other environment that evolves with time.  Given a process~-- known to belong to a specific class~-- can one control  its behaviour
knowing only the class but not which particular member of the class one happens to be dealing with?  In other words, is there a single control strategy that ``works" for every
member of the underlying class?   In essence, the broad field of adaptive control addresses  this question~-- the term ``adaptive" carrying the connotation of some
adjustment contrivance (explicit or implicit) to counter the lack of precise knowledge of the process to be controlled.

Roughly speaking, adaptive control can be compartmentalised into two categories: {\it identifier-based} strategies and
 its complement,  {\it non-identifier-based} strategies.
The former category has its origins in the early~1950s when the design of autopilots for high-performance aircraft triggered research in this area.
Development continued in the~1960s through the application of state space methods and Lyapunov's stability theory.   The underlying methodology
applies in the context of a parametrized class $\{P_\theta|~\theta \in\Theta\}$ to which the particular process $P_\theta$ to be controlled is known to belong
(but the associated parameter $\theta$ is not known).  An identifier-based strategy explicitly incorporates a mechanism which seeks to identify
the unknown parameter by generating, from input-output data, an estimate $\hat\theta\simeq\theta$ and applying control appropriate to the
estimated process $P_{\hat\theta}$.  However, according to \textit{{{\AA}}str{\"o}m}~(1983)~\cite{Astr83}, the early years showed a
``lot of enthusiasm, bad hardware and nonexisting theory''.

Identifier-based adaptive control is outside the scope of the present article.
Instead, the focus of attention is non-identifier-based adaptive control which emerged in the~1980s in response to two basic questions:~\\[-3ex]
\begin{itemize}
\item
What structural assumptions on the process to be controlled are sufficient (and/or necessary) to ensure the attainment of prescribed performance objectives in some appropriate sense?
\item
Assuming feasibility, is there a ``simple" controller that achieves the requisite performance without parameter identification or estimation?
\end{itemize}
The central concern of the present paper is an exposition of the theory of {\it funnel control} in the context of continuous-time nonlinear dynamical processes, with control input~$u$ and output~$y$,
encompassing  linear and nonlinear systems described by functional differential equations and differential-algebraic systems.

In its essence, the control problem to be addressed is the following: given a class $\Sigma$ of dynamical systems,  with $\R^m$-valued input~$u$ and~$\R^m$-valued output~$y$, and
a class of reference signals~${\mathcal Y}_{\text{\rm ref}}$, determine an output-feedback strategy which ensures that, for every system of class~$\Sigma$ and any reference signal~$y_{\text{\rm ref}}$ of
class~${\mathcal Y}_{\text{\rm ref}}$, the output $y$ approaches the reference~$y_{\text{\rm ref}}$ with prescribed transient behaviour and asymptotic accuracy.  The twin objectives of
``prescribed transient behaviour and asymptotic accuracy" are reflected in the adoption of a so-called ``performance funnel'' (see Fig.\,\ref{Fig:funnel}), defined by
\begin{equation} \label{eq97:funnel}
   \mathcal{F}_{\varphi}
    :=
 \setd{ (t,e) \in \R_{\ge 0}\times\R^m }{ \ \varphi(t) \, \|e\| < 1 }\,,
\end{equation}
in which the error function $t \mapsto e(t):=y(t)-y_{\text{\rm ref}}(t)$ is required to evolve.

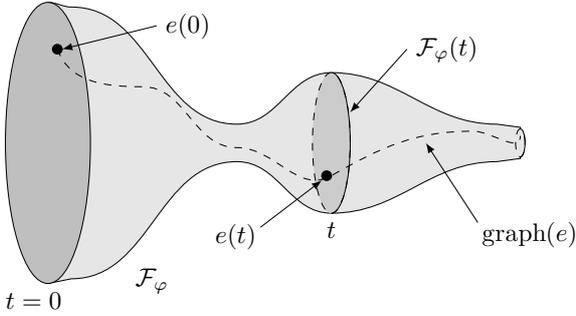
\begin{figure}[h!t]
\begin{center}
\resizebox{0.9\columnwidth}{!}{
        \begin{tikzpicture}[x=6.5mm, y=6.5mm]
            \tikzset{>=latex}

            \definecolor{lightlightgray}{gray}{0.9}
            \definecolor{lightergray}{gray}{0.8}

            \tikzstyle{dot}=[circle, fill, inner sep=1.5pt]

            \filldraw[fill=lightlightgray] (0,3) to[out=0, in=180] (0.5,2.9) to[out=0, in=180] (4,0.4) to[out=0, in=180] (6,1.5) to[out=0, in=180] (9.5,0.4) -- (10,0.333) -- (10,-0.333) -- (9.5,-0.4) to[out=180, in=0] (6,-1.5) to[out=180, in=0] (4,-0.4) to[out=180, in=0] (0.5,-2.9) -- (0,-3) -- cycle;

            \filldraw[fill=lightgray] (0,0) ellipse (0.9 and 3);

            \filldraw[fill=lightergray,dashed] (6,0) ellipse (0.4 and 1.5);

           \draw (6,-1.5) arc(270:360:0.4 and 1.5);
           \draw (6.4,0) arc(0:90:0.4 and 1.5);

            \filldraw[fill=lightlightgray,dashed] (10,0) ellipse (0.1 and 0.3);
            \draw (10,-0.3) arc(270:360:0.1 and 0.3);
           \draw (10.1,0) arc(0:90:0.1 and 0.3);

            \draw[dashed] (0.2,2) to[out=-70, in=180] (2,1.2) to[out=0, in=180] (4,-0.1) to[out=0, in=-150] (6,-0.7) to[out=30, in=180] (9,0.25) to[out=0, in=-90] (10,0.15);

            \draw (0.2,2) node[dot] {} -- (0.3,2.0);
            \draw[<-] (0.3,2)  -- (2.3,2.5) node[right] {$e(0)$};
            \draw[<-] (5.8,-0.8) -- (4.5,-1.8);
            \draw[<-] (6.395,0.65) -- (7.6,2) node[right] {$\cF_\varphi(t)$};
            \draw[->] (9.1,-1.7) -- (8,0);

            \node[below] at (-0.3,-3) {$t=0$};
            \node[below] at (2.2,-2.5) {$\mathcal{F}_\varphi$};
            \node[below] at (6,-1.5) {$t$};
            \node[left] at (4.6,-2) {$e(t)$};
            \node[dot] at (5.9,-0.7) {};
            \node[below right] at (9,-1.5) {$\text{graph}(e)$};

        \end{tikzpicture}
}
\end{center}
    \caption{Performance funnel $\mathcal{F}_\varphi$.} \label{Fig:funnel}
\end{figure}
\noindent
 The only {\it a priori} assumption on~$\varphi\colon\R_{\ge 0}\to\R$ is that it belongs to the following class of locally absolutely functions
\begin{equation}\label{Phi}
\Phi :=
\setdef{ \!\!  \varphi\in\mathcal{AC}_{\rm loc}(\R_{\ge 0},\R) \!\!}{\!\! \begin{array}{l}
\varphi (t) >0\ \forall\ t>0,\\ \liminf_{t\to\infty}\varphi(t) >0, \\
\exists\, c>0 \ \text{for a.a.}\ t\geq 0:\\
     |\dot\varphi (t)|\leq c\big(1+\varphi(t)\big)\  \end{array}\!\!\!\!}.
\end{equation}
The funnel is shaped -- through the choice of the function $\varphi\in\Phi$ -- in accordance with the specified transient behaviour and asymptotic accuracy. The funnel may be identified with the
graph of the map~$t\mapsto \cF_{\varphi}(t):=\big(-1/\varphi(t),1/\varphi(t)\big)$ from~$\R_{> 0}$ to the open intervals of~$\R$.
For~$t>0$, we refer to~$\cF_\varphi (t)$ as the {\it funnel~$t$-section}, see also Fig.\,\ref{Fig:funnel}.
We stress that, in \eqref{eq97:funnel}, $\varphi (0)=0$ is possible, in which case the funnel ${0}$-section is the whole space~$\R^m$
and so there is no restriction on the initial value~$e(0)$: with slight abuse of terminology, in this case we refer to~$\cF_\varphi$ as
an ``infinite funnel''.
As an example of an infinite funnel consider, for $\varepsilon >0$ and $T >0$,
the choice $\varphi_1(t)=\varepsilon^{-1}\min\{t/T,1\}$ for all $t\ge 0$, which  accords  with the aim of attaining a tracking accuracy quantified by $\varepsilon$ in prescribed time $T$ for all initial data. We also stress that, through the choice of unbounded $\varphi \in\Phi$, the radius of the funnel $t$-section shrinks to zero as $t\to\infty$, thus enabling asymptotic tracking; see Remark~\ref{Rem:IMP} for more details.
Whilst it is often convenient to choose a monotonically
decreasing funnel boundary, it might be advantageous to
widen the funnel over some later time intervals, for instance
in the presence of periodic disturbances or strongly-varying
reference signals. The formulation \eqref{eq97:funnel} encompasses a wide variety of funnel boundaries,
see also~\cite[Sec.~3.2]{Ilch13}.

We will  frequently use the phrase ``structural assumptions''~--  albeit without precise definition.
What we have in mind, roughly speaking, is that various components (functions, matrices, operators, etc.) of the differential equations governing the evolution of
the process to be controlled  do not need to be precisely known but are required only to exhibit certain properties (continuity, invertibility, causality, etc.). In particular, these properties should be preserved under state space transformation.\\[2ex]

\noindent
\textbf{Nomenclature} \\[2ex]
\begin{supertabular}[ht]{p{47pt}p{183pt}}
$\Re \lambda$, $\Im \lambda$
& the real, imaginary part of a complex number $\lambda\in{\mathbb{C}}$, respectively.\\
$\R_{\ge \alpha}$, $\R_{>\alpha}$, $\R_{\le\alpha}$, $\R_{<\alpha}$ & $[\alpha,\infty)$, $(\alpha,\infty)$, $(-\infty,\alpha]$, $(-\infty,\alpha)$, $\alpha\in\R$.\\
$\mathbb{C}_{\ge \alpha}$, ${\mathbb{C}}_{>\alpha}$, ${\mathbb C}_{\le\alpha}$, ${\mathbb C}_{< \alpha}$
   & complex numbers with real part in $\R_{\ge \alpha}$, $\R_{>\alpha}$, $\R_{\le\alpha}$, $\R_{<\alpha}$, respectively.\\
$\langle \cdot,\cdot\rangle$ & inner product  on a Hilbert space.\\
$\|\cdot \|$
   & norm on a normed space.\\
 $\Gl_n(\R)$
   & the general linear group of invertible real $n \times n$ matrices \\
$\R[s]$, $\R(s)$
   &  the ring of polynomials with coefficients in~$\R$ and  indeterminate~$s$,
    the quotient  field of $\R[s]$, respectively. \\
    ${\mathfrak L}(N_1,N_2)$  & the space of bounded linear operators $A:N_1\to N_2$, for normed spaces $N_1$ and $N_2$.\\
${\mathcal L}^\infty(I,  \R^\ell) $
  &  the Lebesgue space of measurable essentially bounded  functions $f:I\to\R^n$
with $\|f\|_\infty := {\text{ess\,sup}}_{t\in I}\|f(t)\|$, where $I\subseteq\R$ is some interval.\\
${\mathcal L}^\infty_{\rm loc} (I, \R^n)$ &
 the set  of  measurable locally  essentially bounded functions  $f:    I  \to \R^n$\, where $I\subseteq\R$ is some interval.\\
$\cL^p(I, \R^n)$ & the Lebesgue space of measurable and $p$th power integrable functions $f:I\to\R^n$, where $I\subseteq\R$ is some interval and $p\in[1,\infty)$.\\
$\mathcal{C}(I,  \R^n) $
  &  the set of continuous functions  $f:    I  \to \R^n$, where $I\subseteq\R$ is some interval.\\
 $\mathcal{C}^k(I,  \R^n) $
  &  the set of  $k$-times continuously differentiable functions  $f:    I  \to \R^n$, where $I\subseteq\R$ is some interval.\\
$\mathcal{AC}_{\rm loc}(I,  \R^n) $
  &  the set of locally absolutely continuous functions  $f: I  \to \R^n$, where $I\subseteq\R$ is some interval.\\
$\cW^{k,\infty}(I,  \R^n)$ &
the space of functions $f\in\cL^\infty (I,\R^n)$ with derivatives $f^{(i)}\in\cL^\infty (I,\R^n)$, $i=1,\ldots,k$,
where $I\subseteq\R$ is some interval and $k\in\N$.\\
$N\succ M $ &  $\langle x, (N-M) x\rangle >0$ for all $x\in\R^n\!\setminus\!\{0\}$, \ $N,M\in \R^{n\times n}$.\\
\end{supertabular}
%
\subsection{The genesis of funnel control: the scalar linear  prototype}
\noindent
By way of motivation, we seek to illustrate the salient characteristics of non-identifier-based adaptive control in the context of the simplest class
of continuous-time dynamical systems with control, namely, scalar linear systems of the form
\begin{equation}\label{abc}
\begin{aligned}
\dot x(t)&=ax(t)+bu(t), &&x(0)=x^0,\\
y(t)& =cx(t)  &&\text{output},
\end{aligned}
\end{equation}
or, equivalently,
$\dot y(t)=ay(t)+cb\,u(t)$, $y(0)=cx^0$,
where the parameters $a,b,c,x^0\in\R$ are arbitrary and unknown to the controller.  Only the output~$y$ is available for control purposes.
The quantity~$cb$ amplifies/attenuates and assigns a polarity to the input~$u(t)$.  In the spirit of the latter observation, we will refer to~$\sgn (cb)$ as the {\it control direction}.
(We disregard the trivial case of~$cb=0$ in which the control has no influence on the output  -- a circumstance that has
neither practical nor mathematical interest.)
\\

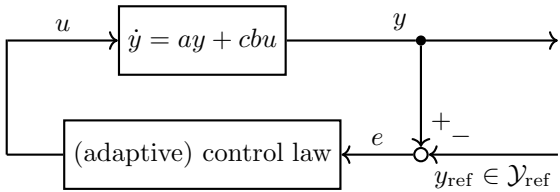
\begin{figure}[h!tb]
\begin{center}
  \begin{tikzpicture}[thick,node distance = 12ex, box/.style={fill=white,rectangle, draw=black}, blackdot/.style={inner sep = 0, minimum size=3pt,shape=circle,fill,draw=black},plus/.style={fill=white,circle,inner sep = 0, minimum size=5pt,thick,draw},metabox/.style={inner sep = 3ex,rectangle,draw,dotted,fill=gray!20!white}]

  \node (box1)[box, minimum size = 6ex]{$\begin{aligned}
\dot  {y}  =  a y+ cbu
\end{aligned}$};
  \node (yfork)[blackdot,right of = box1, xshift = 7ex] {};
  \node (end1)[right of = yfork, minimum size=0pt, inner sep = 0pt] {};
  \node (circ)[plus, below of = yfork, yshift = 2ex] {};
  \node (end2)[right of = circ, minimum size=0pt, inner sep = 0pt] {};
  \node (box2)[box, minimum size = 6ex, below of = box1, yshift = 2ex]{(adaptive) control law};
  \node (ucorner1)[left of = box2, minimum size=0pt, inner sep = 0pt, xshift = -5ex] {};
  \node (ucorner2)[above of = ucorner1, minimum size=0pt, inner sep = 0pt, yshift = -2ex] {};

  \draw(box1) -- (yfork) node[very near end,above]{$y$};
  \draw[->](yfork) -- (end1);
  \draw[->](yfork) -- (circ) node[near end,right]{$+$};
  \draw[->](end2) -- (circ) node[near end,above]{$-$}node[midway,below]{$y_{\rm \scriptsize ref}\in {\cal Y}_{\rm ref}$};
  \draw[->](circ) -- (box2) node[midway,above]{$e$};
  \draw(box2) -| ($(ucorner1)$);
  \draw(ucorner1) -| ($(ucorner2.north)$);
  \draw[->](ucorner2) -- (box1) node[midway,above]{$u$};

  \end{tikzpicture}
\end{center}
            \vspace{-0.5cm}
\caption{Closed-loop system}
\label{Fig:abc-cl-l}
\end{figure}

The
overall scenario is shown in Fig.\,\ref{Fig:abc-cl-l}, wherein $y_{\text{ref}}$ is some reference signal which the system output should emulate (in some appropriate sense).
In this simple setting, we trace the development of funnel control through two of its antecedents, namely, high-gain adaptive stabilization and $\lambda$-tracking.

\subsubsection{High-gain adaptive stabilization}\label{hgas}
\noindent
First, consider the problem of output feedback stabilization of~\eqref{abc}, that is, determine an output feedback strategy $u(t)=f(y(t))$ (if one exists)
 which ensures that, for
each $x^0\in\R$, every solution of the feedback-controlled initial-value problem
$\dot x(t)=ax(t)+bf(cx(t))$, $x(0)=x^0$, is global (i.e., exists on $\R_{\ge 0}$) and is such that
$y(t)\to 0$ as~$t\to\infty$  (in the context of Fig.\,\ref{Fig:abc-cl-l}, $y_{\text{ref}}=0$). If we assume that~\eqref{abc} satisfies the \textit{structural property}
\begin{equation}\label{cb}
cb >0,
\end{equation}
(that is, the control direction is positive)
then, given any~$\mu >0$ and setting $k^*:= (\mu+a)/(cb)$, we see that the linear output feedback $u(t)=-k^*y(t)$ gives the exponentially stable system $\dot x(t)=-\mu x(t)$.
Thus, arbitrarily fast exponential decay is achievable by output feedback $u(t)=-k^* y(t)$ with sufficiently large $k^*$ (the control {\it gain} in engineering parlance, whence
the terminology {\it high-gain} control).  This observation is referred to as the {\it high-gain property} of the system~\eqref{abc}.
In summary, the structural property~\eqref{cb} is sufficient for feasibility of stable behaviour by output feedback.    However, in the absence of any further knowledge of the parameters $a,b,c$, it is not possible
to compute a value~$k^*$ with the requisite property that $k^*$ should be larger than the threshold value $a/(cb)$.   Can this impasse be circumvented?
This question is answered in the affirmative if, instead of fixed-gain feedback, linear output feedback with \textit{variable} gain
\begin{equation}\label{vark}
u(t)=-k(t)y(t)
\end{equation}
is adopted and the monotone non-decreasing gain~$k(\cdot)$
is generated via the differential equation
\begin{equation}\label{kdot}
\dot k(t)=y(t)^2,\quad k(0)=k^0\in\R,
\end{equation}
where $k^0$ is arbitrary.  The combination of \eqref{abc}, \eqref{vark} and~\eqref{kdot} yields the initial-value problem
\begin{equation}\label{abck}
\begin{aligned}
\dot y(t)& =-(k(t)cb-a)y(t), & y(0)= y^0,\\
\dot k(t)& =y(t)^2, &    k(0)=k^0.
\end{aligned}
\end{equation}
Let $(y^0,k^0)\in\R^2$ be arbitrary.
The standard theory of ordinary differential equations applies
to conclude that~\eqref{abck} has a
unique maximal solution $(y,k)\colon [0,\omega)\to\R^2$, $0 < \omega\leq \infty$.  (Here, by ``standard theory", we mean basic results that can be found in elementary textbooks as, for example,
\cite{Walt98} or~\cite{LogeRyan14}.)
Differentiation of the  positive-definite form $z\mapsto z^2$ along the component $y(\cdot )$ of the
solution of~\eqref{abck} yields, for almost all $t\in[0,\omega)$,
\begin{multline*}
\ddt \big(y(t)^2\big)
 = 2 y(t) \dot y(t) = 2 y(t) \big(a-cb k(t)\big)\, y(t) \\
 =  -2cb \,k(t) \dot k(t) + 2a \,\dot k(t)
  =-cb\, \ddt \big(k(t)^2\big) + 2a \,\dot k(t)
\end{multline*}
which, on integration, gives
\begin{equation} \label{eq:y^2-int}
0 \le  y(t)^2  = (y^0)^2 - cb\,   \big(k(t)^2 -(k^0)^2\big)
                  +  2 a  \big(k(t) -(k^0)\big).
\end{equation}
In view of~\eqref{cb}, it immediately follows from~\eqref{eq:y^2-int} that $k\in{\mathcal L}^\infty([0,\omega),\R)$.
By boundedness of~$k$
we may infer from~\eqref{abck} that~$y$ is exponentially bounded.  Suppose~$\omega<\infty$, then the closure of the graph of
$(y,k)\colon [0,\omega)\to \R^2$ is a compact subset of $\R_{\ge 0}\times\R^2$ which contradicts maximality of the solution; hence $\omega =\infty$.
Boundedness of~$k$ is equivalent to
$y\in{\mathcal L}^2(\R_{\ge 0},\R)$ and, furthermore,
invoking~\eqref{abck}  we have
$\dot y\in{\mathcal L}^2(\R_{\ge 0},\R)$.
Therefore, we may conclude that $y(t)\to 0$ as $t\to\infty$.  Since the gain function~$k$ is bounded and monotone, it converges to a finite limit.
Thus, subject only to the structural assumption of positive control direction $cb >0$, every system~\eqref{abc} is stabilized by the adaptive strategy
\eqref{vark}--\eqref{kdot} and the controller gain function~$k$ is monotone and bounded.
However, boundedness of~$k$ may  fail to hold if the system~\eqref{abc} is subject to an extraneous disturbance.
This   failure can be
illustrated by means of a simple example.
Assume that the particular system~\eqref{abc}
is given by
$(a,b,c)=(0,1,1)$  and is subject to a spurious bounded additive signal~$d$, in which case the dynamics are
governed by
\[
\dot x(t)=u(t)+d(t).
\]
Application of control \eqref{vark}--\eqref{kdot} gives the {closed-loop initial-value problem
\begin{equation*}
\begin{aligned}
\dot x(t) & = -k(t )x(t)+d(t), &  x(0)=x^0,
\\
\dot k(t)  &=   x(t)^2, &   k(0)=k^0.
\end{aligned}
\end{equation*}
For purposes of illustration, assume that the disturbance is given by $d(t) := 3-\frac{10+9t}{3(1+t)^{4/3}}\to 3$ as $t\to\infty$.  Then,
 for initial data $(x^0,k^0)=(1,0)$, it is readily verified that there exists a unique global solution given by
\begin{multline*}
(x,k)\colon\R_{\ge 0}\to \R^2,\\ t\mapsto \big(x(t),k(t)\big) :=\left((1+t)^{-1/3}\,,\,3\big((1+t)^{1/3}-1\big)\right).
\end{multline*}
Thus, whilst the objective $x(t)\to 0$ as $t\to\infty$ is achieved, it is done at the expense of an unbounded gain function $k$ which, from the
viewpoint of implementation, renders the control strategy impracticable.

\subsubsection{Disturbances and high-gain $\lambda$-stabilization}\label{sect:lstab}
\noindent
The inability of the high-gain adaptive strategy \eqref{vark}-\eqref{kdot} to handle bounded disturbances  can be circumvented by weakening the control objective in the
following manner.  In the context of the scalar example~\eqref{abc}, in place of the objective $y(t)\to 0$ as $t\to\infty$ we substitute the weaker requirement that, as
$t\to\infty$, $y(t)$ should approach the interval $[-\lambda,\lambda]$ for some prescribed $\lambda >0$.  Introducing the distance function (parametrized by $\lambda >0$)
\[
\dist_\lambda\colon \R\to\R_{\ge 0},~ z\mapsto \max\{ |z|-\lambda\,,\,0\}
\]
we seek an output feedback of the form~\eqref{vark} which ensures the requisite performance:
$\dist_\lambda (y(t))\to 0$ as $t\to\infty$, and boundedness of the gain function~$k$.

Consider  system~\eqref{abc} but now with an additive disturbance
$d\in\cL^\infty(\R_{\ge 0},\R)$, with norm $\|d\|_\infty$:
\begin{equation}\label{abcd*}
\begin{aligned}
&\dot x(t)=ax(t)+bu(t)+d(t),\quad x(0)=x^0,\\
&\text{with output}~~y(t)=cx(t).
\end{aligned}
\end{equation}
Subject only to the structural assumption~\eqref{cb}, that is~$cb >0$,
we proceed to show that, for any prescribed~$\lambda >0$,
every system~\eqref{abcd*} with bounded disturbance~$d(\cdot)$
exhibits the requisite performance under
the output feedback~\eqref{vark} provided that the gain~$k$ is generated  via the differential equation
\begin{equation}\label{kdotd*}
\dot k(t) =  |y(t)|\, \dist_\lambda (y(t)),\quad k(0)=k^0\in\R.
\end{equation}
Note that the simplicity of the strategy \eqref{vark}-\eqref{kdot} is preserved~--
the novelty in~\eqref{kdotd*} resides in the suppression of the gain adaptation
whenever the output is \textsl{inside the $\lambda$-interval}, i.e., $|y(t)|\leq \lambda$.
The ``price" paid is that asymptotic  convergence of the output  to zero is lost: instead, only an asymptotic approach of the output to the interval~$[-\lambda,\lambda]$ is guaranteed.  However, since the latter
property holds for any prescribed accuracy parameter~$\lambda >0$, the price paid is small.

The combination of
the output feedback~\eqref{vark}  with the gain adaptation~\eqref{kdotd*}
applied to the disturbed scalar linear prototype~\eqref{abcd*}
yields the closed-loop initial-value problem
\begin{subequations}\label{abcdk*}
\begin{align}
\dot y(t)  & =-(k(t)cb-a)y(t)+cd(t),& y(0)=y^0,  \label{abcdk*-1}\\
\dot k(t) & =|y(t)|\, \dist_\lambda(y(t)),& k(0)=k^0. \label{abcdk*-2}
\end{align}
\end{subequations}
Let $(y^0,k^0)\in\R^2$ be arbitrary.  Again, the standard  theory of ordinary differential equations applies
to conclude that~\eqref{abcdk*} has a
unique  maximal solution
$(y,k)\colon [0,\omega)\to\R^2$, $0< \omega\leq\infty$.
Consider  the (Lyapunov-like) function
$z\mapsto \big(\dist_\lambda (z)\big)^2$ with derivative
\[
\delta_\lambda:\R\to\R,\ z\mapsto \left\{\begin{array}{ll}
2\,\dist_\lambda (z)\, \sgn(z),& z\neq 0
\\
0, & z = 0.\end{array}\right.
\]
Differentiation
along the component $y(\cdot)$ of the solution of~\eqref{abcdk*} yields,
for almost all $t\in[0,\omega)$,
\begin{align*}
&\ddt \big(\dist_\lambda (y(t))\big)^2
=     \delta_\lambda(y(t))\, \dot y(t) \notag
\\
& \leq   -2 \big(k(t) cb - a\big) \, |y(t)| \, \dist_\lambda (y(t))
       + 2  |c\,d(t)| \, \dist_\lambda (y(t))\notag \\
&=    - cb\,   \ddt \big( k(t)^2\big)
      + 2\left( a + \lambda^{-1} |c| \, \|d\|_\infty \right) \dot k(t), \notag
\end{align*}
which, on integration, gives
\begin{multline} \label{eq:d_lambda-int}
0\le  \big(\dist_\lambda (y(t)) \big)^2 \le \big(\dist_\lambda (y^0)\big)^2
- cb\,   \big(k(t)^2 -(k^0)^2\big)
\\
                  + 2\big( a + \lambda^{-1} |c| \, \|d\|_\infty \big)\, \big(k(t) -k^0\big).
\end{multline}
In view of~\eqref{cb}, it immediately follows from~\eqref{eq:d_lambda-int} that $k\in{\mathcal L}^\infty([0,\omega),\R)$.
By boundedness of~$k$ and essential boundedness of~$d$,
we may infer from~\eqref{abcdk*-1} that~$y$ is exponentially bounded. Suppose  $\omega<\infty$, then the closure of the graph of
$(y,k)\colon [0,\omega)\to \R^2$ is a compact subset of $\R_{\ge 0}\times\R^2$ which contradicts maximality of the solution;
hence $\omega =\infty$.
Boundedness of~$k$, together with~\eqref{eq:d_lambda-int}, implies
$\dist_\lambda (y(\cdot))\in\cL^\infty(\R_{\ge 0},\R)$.  Furthermore,
in view of~\eqref{abcdk*-1}, we have
$\dot y\in\cL^\infty(\R_{\ge 0},\R)$.
Therefore, the function
$t\mapsto \ddt \big(\dist_\lambda(y(t))\big)^2$
is bounded, and so
$\big(\dist_\lambda(y(\cdot))\big)^2$ is uniformly continuous.
Noting that,  for all $t\ge 0$,
\[
\int_0^t\!\big(\dist_\lambda (y(\tau))\big)^2 \textrm{d}\tau
\leq\!
\int_0^t |y(\tau)|  \dist_\lambda (y(\tau))\,\textrm{d} \tau = k(t)-k^0,
\]
we may infer (from boundedness of $k(\cdot)$) that the uniformly continuous function $\big(\dist_\lambda (y(\cdot))\big)^2$ is in ${\mathcal L}^1(\R_{\ge 0},\R)$.
By Barb{\v{a}}lat's Lemma we may now conclude that
$\big(\dist_\lambda(y(t))\big)^2\to 0$ as $t\to \infty$.
Therefore, subject only to the structural assumption $cb >0$, for every system~\eqref{abcd*} with bounded disturbance~$d$, the adaptive strategy~\eqref{vark},~\eqref{kdotd*}
achieves the  two performance objectives of
$\dist(y(t))\to 0$ as $t\to\infty$ and convergence of the gain~$k$ to a finite limit.

\subsubsection{High-gain $\lambda$-tracking}\label{sect:tracking}
\noindent
Consider again the class of systems with disturbance~$d(\cdot)$
 given by~\eqref{abcd*},  but now with the control objective of
 \textit{output $\lambda$-tracking}, that is,
for arbitrary prescribed $\lambda >0$ and a (suitably regular) reference signal $y_{\text{\rm ref}}$, we seek a control strategy which ensures that
$\dist_\lambda (y(t)-y_{\text{\rm ref}}(t)) \to 0$ as~$t\to \infty$.  For the class of admissible reference signals we choose
${\mathcal Y}_{\text{\rm ref}}=   {\mathcal W}^{1,\infty}(\R_{\ge 0},\R)$, that is,
$y_{\text{\rm ref}}\colon \R_{\ge 0}\to \R$
is admissible if it is bounded, absolutely continuous and has essentially bounded derivative.

Whilst the $\lambda$-tracking problem differs conceptually from the $\lambda$-stabilization problem of the previous subsection, there is no mathematical distinction
between these two problems.  Indeed, let $y_{\text{\rm ref}}\in {\mathcal Y}_{\text{\rm ref}}$ be arbitrary.  Writing $e(t)=y(t)-y_{\text{\rm ref}}(t)$, we see that
the differential equation in~\eqref{abcd*} may be expressed as
\[
\dot e(t)=ae(t)+cbu(t)+\hat d(t),
\]
with the function $\hat d\in\cL^\infty(\R_{\ge 0},\R)$ given almost everywhere by
\[
\hat d(t)=cd(t)+ay_{\text{\rm ref}}(t)-\dot y_{\text{\rm ref}}(t).
\]
Thus, we see that the $\lambda$-tracking problem for system~\eqref{abcd*}
 with reference signal $y_{\text{\rm ref}}\in {\mathcal Y}_{\text{\rm ref}}$ is equivalent
to the $\lambda$-stabilization problem for system~\eqref{abcd*}
with parameters $(a,cb,1,\hat d\,)$ and so the results of the previous subsection apply to conclude that, under the structural assumption~$cb >0$, the feedback strategy
\begin{equation}\label{tracking}
u(t)=-k(t)e(t),\quad\dot k(t)=   |e(t)|\,  \dist_\lambda(e(t)),\quad k(0)=k^0
\end{equation}
ensures attainment of the $\lambda$-tracking objectives:
$\dist_\lambda (e(t))\to 0$ as $t\to\infty$ and  convergence of the gain~$k$ to a finite limit.

\subsubsection{Unknown control direction}
\noindent
Throughout the above motivational discussion on adaptive stabilization and tracking in the restricted context of scalar systems, the structural assumption~\eqref{cb} remained in force.
Can this assumption be weakened?  As already noted, the case $cb=0$ is of neither practical nor mathematical interest.  The question then is: can
assumption~\eqref{cb} be replaced by
\begin{equation}\label{eq:cbnot0}
cb \not= 0.
\end{equation}
Clearly, the arguments adopted in Section \ref{hgas} apply {\it mutatis mutandis} to conclude that  the feedback
(a variant of \eqref{vark}, modified by the inclusion of the control direction term $\sgn (cb)$)
\begin{equation}\label{eq:u=sgn_ky}
u(t) \  = \ - \sgn(cb)\, k(t)\, y(t),\quad \dot k(t)=y(t)^2,~~k(0)=k^0
\end{equation}
ensures that $y(t)\to 0$ as $t\to\infty$ and the monotone gain function converges to a finite limit.  However, under the weakened assumption \eqref{eq:cbnot0},
this modified adaptive strategy cannot be realized as the control direction $\sgn (cb)$ is unknown to the controller.
Loosely speaking, what is required is a gain mechanism that can ``probe" in both the positive and negative control directions.  This idea points to a control strategy
of the form
\begin{equation}\label{eq:u=nussy}
u(t) \  = N(k(t))\, y(t),\quad \dot k(t)=y(t)^2,~~k(0)=k^0,
\end{equation}
where $N\colon \R\to\R$ is a continuous function with  the properties
\begin{equation}\label{Nprops}
\limsup_{k\to\infty}N(k)=+\infty\quad\text{and}\quad \liminf_{k\to\infty} N(k)=-\infty.
\end{equation}
One such function is $k\mapsto N(k)=k^2\cos k$.  This particular example exhibits the so-called ``Nussbaum properties":\\
for all $k^0\in\R$,
\begin{equation}\label{eq:Nussbaum}
\begin{aligned}
& \sup_{k>k^0} \frac{1}{k-k^0} \int_{k^0}^k N(\kappa)\, {\rm d}\kappa = \infty ,\\
 & \inf_{k>k^0} \frac{1}{k-k^0} \int_{k^0}^k N(\kappa)\, {\rm d}\kappa = -\infty .
\end{aligned}
\end{equation}
Let $N\colon\R\to\R$ be any locally Lipschitz function such that~\eqref{eq:Nussbaum} holds.
The combination of~\eqref{abc} and~\eqref{eq:u=nussy} yields the initial-value problem
\begin{equation}\label{abcnuss}
\begin{aligned}
\dot y(t)  &=(a+cb\,N(k(t)))y(t), & y(0)= y^0\\
\dot k(t)  &= y(t)^2, & k(0)= k^0.
\end{aligned}
\end{equation}
Let $(y^0,k^0)\in\R^2$ be arbitrary.
The standard theory of ordinary differential equations applies
to conclude that~\eqref{abcnuss} has
unique  maximal solution $(y,k)\colon [0,\omega)\to\R^2$, $0<\omega\leq \infty$.
Then, for almost all $t\in[0,\omega)$,
\[
\ddt \big(y(t)^2\big)
 = 2 y(t) \dot y(t) = 2 \big(a+cb N(k(t))\big) \dot k(t),
\]
which, on integration, gives
\begin{equation}\label{nussid}
0\leq y(t)^2 = (y^0)^2 + 2 cb \int_{k^0}^{k(t)} N(\kappa)\dd \kappa +2 a \big(k(t)-k^0\big).
\end{equation}
Consider the non-trivial scenario $y^0\neq 0$.} Seeking a contradiction, suppose that the monotonically non-decreasing function $k(\cdot)$ is unbounded.
Let $\tau\in (0,\omega)$ be such that $k(\tau) > k^0$
and set $\alpha := 2a+(y^0)^2/(k(\tau)-k^0)$.   Then it follows from~\eqref{nussid} that
\[
\forall\, t\in [\tau,\omega):\ 0\leq \alpha +\frac{2cb}{k(t)-k^0}\int_{k^0}^{k(t)} N(\kappa)\, \dd \kappa,
\]
which, depending on the system's control direction (unknown to the controller), runs counter to one or the other of properties~\eqref{eq:Nussbaum}: specifically, if $cb >0$,
then the second
of properties~\eqref{eq:Nussbaum} is contradicted or, if $cb <0$, then the first of these properties is contradicted.
 Thus, the supposition of unboundedness of~$k(\cdot )$ is false.
Having established boundedness of~$k(\cdot)$, an argument analogous to that used in Section~\ref{hgas} applies to conclude that $\omega =\infty$, $y(t)\to 0$ as $t\to\infty$ and~$k(\cdot)$
converges to a finite limit.  Thus, via the above gain mechanism, the efficacy of high-gain adaptive stabilization is preserved when the assumption $cb >0$ is weakened to $cb\neq 0$.  The same
modification preserves the efficacy of the adaptive $\lambda$-stabilizing and $\lambda$-tracking controllers in Sections \ref{sect:lstab} and \ref{sect:tracking} under the weakened assumption $cb\neq 0$.
\subsubsection{Funnel control}\label{Sssec:FunCon}
\noindent
Consider again a scalar system of the form~\eqref{abc}. As in Section \ref{sect:tracking}, let the class of reference signals
be ${\mathcal Y}_{\text{\rm ref}}= \cW^{1,\infty}(\R_{\ge 0},\R)$.
Prescribe a performance funnel~$\mathcal{F}_{\varphi}$ as in~\eqref{eq97:funnel}
with $m=1$ and~$\varphi\in \Phi$  as in~\eqref{Phi}, see Fig.\,\ref{Fig:funnel}.  
Let $x^0\in\R$ and  $y_{\text{\rm ref}}\in{\mathcal Y}_{\text{\rm ref}}$ be such that
$\varphi (0)|cx^0-y_{\text{\rm ref}}(0)| < 1$.
Note that the latter is automatically satisfied in the case of an ``infinite funnel'', i.e., $\varphi(0)=0$.
Under the structural assumption $cb >0$, we introduce the  \textit{funnel controller}, given by
\begin{multline}\label{eq:FC}
u(t)   =-k(t)e(t),~~k(t)= \varphi(t)\big(1-(\varphi(t)e(t))^2\big)^{-1},\\ e(t) = y(t) -y_{\rm \scriptsize ref}.
\end{multline}
The  idea underlying the gain adaptation~\eqref{eq:FC}
 is that  $k(t)$
is large if, and only if, $(t,e(t))$ is ``close'' to
the 
the boundary of the funnel $t$-section which, when coupled with the high-gain property of~\eqref{abc}, precludes boundary contact.

Under the weaker structural assumption $cb\neq 0$,  the funnel controller is modified in the following manner: the first of equations \eqref{eq:FC} is replaced
by
\[
u(t)=N(k(t))e(t),
\]
where   $N: \R\to\R$ is any continuous function  with the properties \eqref{Nprops}.
We stress that properties~\eqref{eq:Nussbaum} imply properties~\eqref{Nprops}, but the reverse implication is false: for example,
the function $s\mapsto N(s) = s\sin s$  \,
exhibits properties~\eqref{Nprops}, but fails to exhibit the Nussbaum properties~\eqref{eq:Nussbaum}.

Under either structural assumption $cb>0$ or $cb\not=0$,
the funnel controller is a proportional time-varying output error feedback.  However, in contrast with the $\lambda$-tracking control, the control gain in~\eqref{eq:FC} is not
monotone and not dynamically generated.   Instead, at generic time $t$, the gain $k(t)$  is statically generated via the nonlinear
function $\kappa\colon \mathcal{F}_{\varphi}\to \R,~(t,z)\mapsto \varphi(t)\big(1-(\varphi(t)z)^2\big)^{-1}$ evaluated at $(t,e(t))$.   In particular, $k(t)=\kappa(t,e(t))$ and, under the structural assumption $cb >0$,
the control is given by
\[
u(t)=-\kappa(t,e(t))\,e(t)
\]
or, under the weaker structural assumption $cb\neq 0$,
\[
u(t)=N\big(\kappa(t,e(t))\big)\,e(t).
\]
For purposes of exposition, we impose the weaker structural assumption $cb\neq 0$, and the combination of~\eqref{abc} and the funnel controller~\eqref{eq:FC} yields the closed-loop initial-value problem
\[
\dot e(t)=  \Big(a+cbN\big(\kappa(t,e(t))\big)\Big) \, e(t) +  ay_{\text{\rm ref}}(t)-\dot y_{\text{\rm ref}}(t)
\]
with $e(0) = cx^0 -y_{\text{\rm ref}}(0)$ and $(0,e(0))\in\cF_\varphi$, on the relatively open domain $\cF_\varphi\subseteq\R_{\ge 0}\times\R$.

By a \textit{solution} of this problem we mean  an absolutely continuous function
$e\colon [0,\omega)\to \R$ with  $\omega \in(0,\infty]$
and
$\text{graph}(e) \subseteq\cF_\varphi$.
A solution is \textit{maximal}, if it
has no proper right extension that is also a solution.
The theory of ordinary differential equations applies to conclude that the initial-value problem has a solution and every solution can be maximally extended. Let $e\colon [0,\omega)\to\R$ be a maximal solution. A maximal solution~$e$ is said to 
evolve strictly inside the performance funnel $\cF_\varphi$,
if there exists $\varepsilon >0$ such that
$\varphi(t)|e(t)|+\varepsilon \leq 1$ for all $t\in [0,\omega)$, in which case it immediately follows that
$\omega=\infty$ and the gain~$k$ and control~$u$ are bounded functions.
Therefore, in establishing the efficacy of funnel control, the crucial mathematical issue is to prove that every maximal solution 
evolves strictly inside $\cF_\varphi$.
 This can be shown via a
delicate contradiction argument which is not {elaborated here (but is subsumed by the proof of a significantly more general result in the main body of the manuscript)}.

We remark that, for prescribed $\lambda >0$, if $\varphi\in\Phi$ is chosen so that $\liminf_{t\to\infty}\varphi (t)\ge\lambda^{-1}$, then
attainment of strict evolution of~$e$ inside~$\cF_\varphi$ implies {\it a~fortiori} attainment of the $\lambda$-tracking objective $\dist_\lambda (e(t))\to 0$ as $t\to \infty$.

\subsubsection{A historical miscellany} \label{Sssec:history}
\noindent
The above considerations form an attempt to highlight fundamental characteristics of non-identifier-based adaptive control albeit in the simplified context of scalar linear systems.
The literature abounds with generalizations in various directions: for example, to higher-dimensional or infinite-dimensional systems and to encompass nonlinear systems.

The idea underpinning high-gain adaptive stabilization emerged in
the early~1980s
 in various investigations aimed at circumventing the need for cumbersome parameter estimators in order
to build adaptive controllers for certain high-gain stabilizable linear systems.
Seminal contributions towards this goal were made by
\textit{Morse} (1983)~\cite{Mors83}, \textit{Byrnes and Willems} (1984)~\cite{ByrnWill84},
and
\textit{Mareels} (1984)~\cite{Mare84}.  \textit{Morse} (1983)~\cite{Mors83} conjectured the non-existence of a smooth adaptive controller which stabilizes every system of the form~\eqref{abc} under assumption \eqref{eq:cbnot0}.
\textit{Nussbaum} (1983)~\cite{Nuss83}
showed that Morse's conjecture is false: this fact enabled the structural assumption~\eqref{cb} to be
weakened to the simple requirement~\eqref{eq:cbnot0}. As in the case of the scalar prototype outlined above (see also \textit{Willems and  Byrnes}
(1984)~\cite{WillByrn84}), multivariable systems with unknown control direction
are amenable to treatment using smooth functions with the ``Nussbaum properties"~\eqref{eq:Nussbaum} (see, for example, \cite{GeHong04,GeWang02,GeWang03,JianMare04,Ye01}).
These lines of investigation (see the survey~\cite{Ilch91}) culminated in \textit{M\aa rtensson's} (1985)~\cite{Mart85}
fundamental contribution  which,  in the context of multivariable linear systems,        established that ``the order of any
stabilising regulator is sufficient a priori information for
adaptive stabilisation".

Extension of the core idea in high-gain stabilization to the problem of tracking, by the system output, of a given reference signal were considered
by, {\it inter alia},
\textit{Mareels} (1984)~\cite{Mare84} and {\it Helmke, Pr\"atzel-Wolters \& Schmid}  (1990)~\cite{HelmPrat90a}.  These investigations invoke the {\it internal model principle}:
``a regulator is structurally stable only if the controller utilizes feedback of the regulated variable, and incorporates in the feedback loop a suitably reduplicated model of the dynamic
structure of the exogenous signals which the regulator is required to process'' (see {\it Wonham} (1979)~\cite{Wonh79}). In the context of high-gain asymptotic output tracking, this means
that a control strategy must incorporate
a dynamic component capable of replicating the reference signal that the output is required to track, which inevitably leads to complicated controller structures and places restrictions on the class~${\mathcal Y}_{\text{\rm ref}}$ of allowable reference signals.  By contrast, the high-gain~$\lambda$-tracking approach encompasses reference signals of a more general nature and is such that
the internal model principle is obviated, allowing control strategies of striking simplicity.   The concept of~$\lambda$-tracking was suggested in
\textit{Mareels} (1984)~\cite{Mare84}, is indirectly contained~--
albeit in a somewhat different context~--
in \textit{Miller and Davison} (1991)~\cite{MillDavi91},
and  was first studied for nonlinear systems in
\textit{Ilchmann and Ryan} (1994)~\cite{IlchRyan94}. For further contributions
including applications, see the survey by  \textit{Ilchmann and Ryan} (2008)~\cite{IlchRyan08}.

\noindent
The primary focus of the above historical contributions to both adaptive stabilization and $\lambda$-tracking was {\it asymptotic performance}: with the exception of
\textit{Miller and Davison} (1991)~\cite{MillDavi91}, {\it transient performance} was not considered.   Embracing transient performance was the final step in the genesis of
funnel control.  Whilst rudiments of the methodology can be found in \textit{Ilchmann} (1993) ~\cite[Thm.\,7.2.1]{Ilch93},
its full potential was not recognized until \textit{Ilchmann, Ryan, and Sangwin} (2002)~\cite{IlchRyan02b}
introduced the funnel controller.
A predecessor (which also takes transient behaviour into account)  is the above-mentioned work \cite{MillDavi91} by
\textit{Miller and Davison},  wherein an approach that differs intrinsically from the funnel control methodology  is adopted.

%
\section{Diverse system classes}\label{Sec:structural}\setcounter{equation}{0}
%
\newcommand{\Cm}{{\mathbb C}_{<0}}
\newcommand{\Cp}{{\mathbb C}_{>0}}
\noindent
Having presented the genesis of funnel control in the highly restrictive context of scalar linear systems, we proceed to describe and analyse funnel control (and variants thereof)
applied to considerably larger system classes encompassing linear and nonlinear multivariable systems, differential-algebraic systems, and infinite-dimensional systems.
The breadth of these classes attests to the mathematical maturity of the funnel control methodology.  Furthermore, the practical relevance of the approach is reflected in the 650-page monograph by \textit{Hackl} (2017)~\cite{Hack17} on applications of funnel control in mechatronics.
\subsection{The linear multivariable prototype} \label{Ssec:multivar}
\noindent
First, we focus on a class of square (that is, with equal number of inputs and outputs) linear, finite-dimensional systems of the form
\begin{equation} \label{eq:ABC}
 \left.
 \begin{array}{l}
  \dot{x}(t) = A\,  x(t) + B\, u(t), \quad x(0)=x^0\in\R^n ,
  \\[1mm]
    y(t) =C\, x(t)
 \end{array}
\right\}
\end{equation}
where~$(A,B,C)\in\R^{n\times n} \times \R^{n\times m} \times \R^{m\times n}$,  $m\leq n$,
and the space of inputs $u$ is $ \cU:=\mathcal{L}_{\rm loc}^\infty(\R_{\geq 0},\R^m)$.
For each $(x^0,u)\in\R^n\times\cU$, \eqref{eq:ABC} has a unique solution given by
\[
x:\R_{\geq 0}\to\R^n,\ t\mapsto {\rm e}^{At} x^0 + \int_0^t {\rm e}^{A(t-\tau)} B u(\tau)\, \dd \tau.
\]
We highlight some fundamental structural properties which are central to the funnel control methodology. For successful application of funnel control to~\eqref{eq:ABC}, the entries  of $(A,B,C)$, the initial value,
and even the state dimension need not be known.  What is required is output information and information
pertaining to the structural properties of relative degree, high-frequency gain, and zero dynamics.

\subsubsection{Relative degree}\label{Ssec:rel-degree}
%
\noindent
For a linear system~\eqref{eq:ABC} we define its {\it transfer function}~$G(s)$ (a rational-matrix-valued function) by
\[
G(s) := C(sI-A)^{-1}B\in\R(s)^{m\times m},
\]
which can be written as a formal power series
\[
    G(s) = \sum_{k=0}^\infty s^{-(k+1)} CA^kB
\]
with coefficients $CA^kB\in\R^{m\times m}$, $k \in\N_0$,  called {\it Markov parameters}. If the first non-zero Markov parameter in the power series for~$G(s)$ occurs at the power $s^{-r}$ and is invertible, then we say that system~\eqref{eq:ABC} has relative degree~$r$.
\begin{defn}
\label{Def:rel-deg}
The linear system~\eqref{eq:ABC}, equivalently the triple $(A,B,C)$,
is said to have {\it relative degree} $r  \in \N$,  if
$CA^kB=0$ for $k=0, \ldots ,r - 2$, and $\Gamma :=CA^{r -1}B$ is invertible.
\end{defn}
\noindent
Clearly, the first condition is vacuous in the case~$r=1$.   The Cayley-Hamilton theorem ensures that $r\leq n$.
The matrix~$\Gamma= C A^{r-1} B$ is referred to as
the {\it high-frequency gain matrix}.   It is a higher-dimensional analogue of the control direction $cb$ of Section \ref{hgas}.
We are now in a position to state our first structural assumption.
\\[1ex]
\textbf{(SA1)} \quad $(A,B,C)$ has relative degree $r\in\N$ and $r$ is known to the controller.
\begin{rem}
The terminologies ``relative degree" and ``high-frequency gain" have their origins in the early control engineering literature
in which a ``frequency domain" or ``transfer function" approach to linear systems was central:  the former terminology originates in the difference~$r$ between the degrees
of the denominator and numerator polynomials in transfer functions of single-input, single-output systems; the latter terminology reflects the fact that,
in ``steady state", the output from a stable single-input, single-output system driven by a sinusoidal input is sinusoidal of the same frequency $\omega$ with magnitude
amplified/attenuated by a ``gain" $|g(\omega)|$ with the property that $|g(\omega)|/\omega^r \to \Gamma$ as $\omega\to\infty$ and so, in this sense, $\Gamma$
characterizes ``high-frequency" behaviour.
\end{rem}

Assume that~\eqref{eq:ABC} has relative degree~$r\ge 2$.
Let~$x\in \mathcal{AC }_{\rm loc}(\R_{\geq 0},\R^n)$ be the solution corresponding to
$(x^0,u)\in \R^n\times\cU$, with associated output $y(\cdot)=Cx(\cdot)$.  Define functions $\xi_1,\ldots,\xi_r\in \mathcal{AC}_{\rm loc}(\R_{\geq 0},\R^m)$ by
\[
\xi_k(t):= CA^{k-1}x(t),~~ k=1,\ldots,r.
\]
Then,   for all $t\geq 0$,
\[
\dot\xi_k (t)= CA^{k-1}\dot x(t) =CA^k x(t)=\xi_{k+1}(t),~~k=1,\ldots, r-1,
\]
and so embedded in system~\eqref{eq:ABC} of relative degree $r$ is a chain (of length $r-1$) of $m$-dimensional integrators. In the following,
a coordinate transformation is described which makes this embedded chain explicit.

\subsubsection{Byrnes-Isidori form}\label{Ssec:BIF}
\noindent
Consider system~\eqref{eq:ABC} with relative degree $r\ge 2$. Introduce matrices
\begin{align*}
B_r &:= \begin{bmatrix} B, & AB, & \cdots \ , &  A^{r-1}B\end{bmatrix}\in \R^{n\times mr}
\\
\text{and} \quad
C_r&:=\begin{bmatrix} C\\CA\\ \vdots\\CA^{r-1}\end{bmatrix}\in\R^{mr\times n}
\end{align*}
and observe that $C_rB_r\in\R^{mr\times mr}$ is invertible.
Let $W\in\R^{n\times (n-mr)}$ be such that $\im W=\ker C_r$ and write
\[
V:= (W^\top W)^{-1}W^\top \big(I-B_r (C_rB_r)^{-1}C_r\big)\in\R^{(n-rm)\times n}.
\]
Then,
$U:= \begin{bmatrix} C_r \\ V\end{bmatrix}$ is invertible, with inverse  $U^{-1} = \begin{bmatrix} B_r(C_rB_r)^{-1}, ~W\end{bmatrix}$.
Define
\[
P:= VA^rB\Gamma^{-1},\quad Q:= VAW,\quad S:= CA^r W,
\]
and partition $CA^r B_r(C_rB_r)^{-1}$ into $r$ constituent blocks each of dimension $m\times m$:
\[
CA^r B_r(C_rB_r)^{-1} =\begin{bmatrix} R_1, & R_2, & \cdots\ , & R_{r-1} , & R_r\end{bmatrix}.
\]
The similarity transformation $(A,B,C) \to (UAU^{-1},UB,CU^{-1})$ yields the equivalent representation of \eqref{eq:ABC}
\begin{equation}\label{eq:ABC_BIform}
\begin{aligned}
\dot\xi_k (t)&=\xi_{k+1}(t),~~k=1,\ldots, r-1,\\
\dot\xi_r(t)&=\sum_{k=1}^r R_k\xi_k(t)+S\eta (t) +\Gamma u(t),\\
\dot\eta(t)&=P\xi_1(t)+Q\eta (t),\\
y(t) &= \xi_1(t),
\end{aligned}
\end{equation}
where $\xi_k(\cdot )=CA^{k-1}x(\cdot )$, $k=1,\ldots,r$ and $\eta(\cdot)=Vx(\cdot)$.
This special structure -- wherein the embedded chain of integrators constitutes the first $r-1$ of its dynamic equations -- is known as a {\it Byrnes-Isidori form}.
We remark in passing (and without proof) that, whilst not
a \textit{canonical form}, a Byrnes-Isidori form is close to being so in the sense that if two such forms differ, then they do so only through the triple $(Q,P,S)$.  However, any two such triples (regarded as linear input-output systems) must be
obtainable from each other by a state space transformation; this means that the difference in two Byrnes-Isidori forms is resolved through coordinate transformation of the~$\eta$ variable. More precisely,
if $(\tilde A,\tilde B,\tilde C)$ and $(\hat A,\hat B,\hat C)$ (with associated triples $(\tilde Q,\tilde P,\tilde S)$ and $(\hat Q,\hat P,\hat S)$) are two Byrnes-Isidori
forms in the similarity orbit
of $(A,B,C)$, then $(\tilde Q,\tilde P, \tilde S)$ and $(\hat Q, \hat P, \hat S)$ are similar.  In this sense, a Byrnes-Isidori form is {\it essentially unique}.
Because of this property, the form is often called {\it Byrnes-Isidori normal form} in the literature.
For future reference, we record that, in the
context of the Byrnes-Isidori normal form, the system transfer function is given by
\begin{equation}\label{BIformG}
\hspace*{-2ex}    G(s) = - \left( \sum_{i=1}^r R_i s^{i-1} - s^rI+ S(sI-Q)^{-1} P\right)^{-1} \hspace*{-1ex} \Gamma,
\end{equation}

The above discussion assumes that~$r \geq 2$.  In the relative degree one case~$r=1$ we have~$\Gamma =CB$ and  the Byrnes-Isidori form simplifies to
\begin{align*}
\dot\xi(t)&=R\xi (t)+S\eta (t) +\Gamma u(t),\\
\dot\eta(t)&=P\xi (t)+Q\eta (t),\\
y(t)&= \xi(t).
\end{align*}
In all cases, the triple $(Q,P,S)$ of {\em internal loop matrices} (unique up to a state space transformation) corresponds to a linear $(n-mr)$-dimensional system with input $y$ and output $z$, given by
\begin{equation}\label{eq:QPS}
\dot\eta (t)=Q\eta(t)+Py(t),\quad z(t)=S\eta (t),
\end{equation}
and referred to as the {\it internal dynamics}.

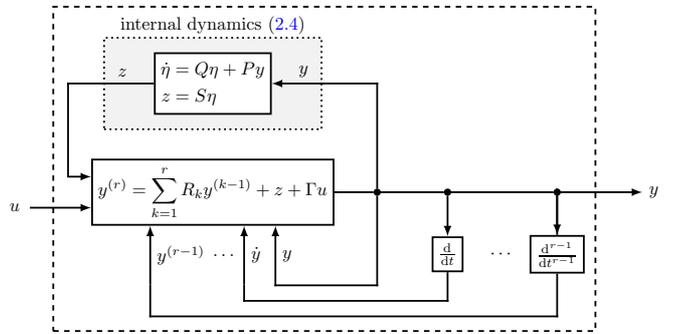
\begin{figure}[!t]
\begin{center}
\resizebox{\columnwidth}{!}{
  \begin{tikzpicture}[thick,node distance = 12ex, box/.style={fill=white,rectangle, draw=black}, blackdot/.style={inner sep = 0, minimum size=3pt,shape=circle,fill,draw=black},plus/.style={fill=white,circle,inner
sep = 0,thick,draw},metabox/.style={inner sep = 3ex,rectangle,draw,dashed,fill=gray!20!white}]
\tikzset{>=latex}
\tikzset{every path/.append style={line width=1pt}}

  \node (box1)    [box,minimum
size=8ex]{$\begin{aligned}y^{(r)}=\sum_{k=1}^{r }R_k y^{(k-1)}+z
+\Gamma u\end{aligned}$};
  \node (xrfork)[blackdot,right of = box1, xshift = 9ex] {};
  \node (x2fork)[blackdot,right of = xrfork, xshift = -3ex] {};
  \node (int1)    [box, below of = x2fork, yshift = 4ex]{$\frac{\rm
d}{\text{d} t}$};
  \node (dots1)    [right of = int1, xshift = -5ex, yshift = 0ex]{$\cdots$};
  \node (yfork)    [blackdot, right of = x2fork, xshift = 2ex] {};
  \node (int2)  [box,below of = yfork, yshift = 4ex]{$\frac{\text{d}^{r -1}}{\text{d} t^{r -1}}$};
  \node (box2)     [box, above of = box1, yshift =
2ex]{$\begin{aligned}\dot{\eta}&=Q\eta+Py\\ z&=S\eta\end{aligned}$};
  \node (end)    [right of = yfork, xshift = -1ex, minimum size=0pt,
inner sep = 0pt, xshift = 0ex] {};

  \node (ycorner0)    [above of = xrfork, minimum size=0pt, inner sep =
0pt, yshift = 2ex] {};
  \node (xrcorner1)    [below of = xrfork, minimum size=0pt, inner sep =
0pt, xshift = 0ex] {};
  \node (xrcorner2)    [below of = box1, minimum size=0pt, inner sep =
0pt, xshift = 8ex, yshift = 0ex] {};
  \node (x2corner1)    [below of = x2fork, minimum size=0pt, inner sep =
0pt, yshift = -2ex] {};
  \node (x2corner2)    [below of = box1, minimum size=0pt, inner sep =
0pt, xshift = 4ex, yshift = -2ex] {};
  \node (x2corner3)    [below of = box1, minimum size=0pt, inner sep =
0pt, xshift = 4ex] {};
  \node (ycorner1)    [below of = yfork, minimum size=0pt, inner sep =
0pt, yshift = -4ex] {};
  \node (ycorner2)    [below of = box1, minimum size=0pt, inner sep =
0pt, xshift = -8ex, yshift = -4ex] {};
  \node (ycorner3)    [below of = box1, minimum size=0pt, inner sep =
0pt, xshift = -8ex] {};
  \node (etacorner1)    [left of = box2, minimum size=0pt, inner sep =
0pt, xshift = -6.5ex] {};
  \node (etacorner2)    [left of = box1, minimum size=0pt, inner sep =
0pt, xshift = -6.5ex, yshift = 2ex] {};
  \node (begin)        [left of = box1, minimum size=0pt, inner sep =
0pt, xshift = -11.5ex, yshift = -2ex] {};

  \node(u_l)        [left of = box1, xshift = -4.5ex, yshift = 20ex]{};
  \node(l_r)        [right of = box1, xshift = 33ex, yshift = -14ex]{};
  \begin{pgfonlayer}{background}
       \node (system)  [metabox, fit = (u_l) (l_r), fill=lightgray!0] {};
  \end{pgfonlayer}

  \draw(box1) -- (xrfork)
node[pos=0.8,above]{$~$};
  \draw(xrfork) -- (x2fork)         node[midway,above]{};
  \draw[->](x2fork) -- (int1)
node[midway,above]{};
  \draw(x2fork) -- (yfork)        node[midway,above]{};
  \draw[->](yfork) -- (int2)
node[midway,above]{};
  \draw[->](yfork) -- (end)
node[pos=0.7,above]{} node[pos=1,right]{$y$};

  \draw(int1) -- (x2corner1.south)
node[midway,above]{};
  \draw[->](xrcorner2.south) -- (xrcorner2.south |- box1.south)
node[midway,right]{$y$};
node[midway,above]{};
  \draw(xrfork) -- (ycorner0.north);
  \draw[->](ycorner0.east) -- (box2)        node[midway,above]{$y\qquad ~$};
  \draw(box2) -- (etacorner1.west)node[midway,above]{$~\quad z$};
  \draw(etacorner1.north) -- (etacorner2.south);
  \draw[->](etacorner2.east) -- (etacorner2.east -| box1.west);
  \draw[->](begin) -- (begin -| box1.west)    node[midway,above]{} node[near start,left]{$u\quad$};
  \draw(xrfork) -- (xrcorner1.south);
  \draw(xrcorner1.east) -- (xrcorner2.west)    node[near start,above]{};
%

  \draw(x2corner1.east) -- (x2corner2.west);
  \draw(x2corner2.south) -- (x2corner3.north);
  \draw[->](x2corner3) -- (x2corner3 |- box1.south)
node(xi)[midway,right]{$\dot y$};
  \node (dots2)    [left of = xi, xshift = 5ex, yshift = 0ex ]{\qquad$~\ldots$};
  \draw(yfork.north) -- (int2);
  \draw(int2) -- (ycorner1.south);
  \draw(ycorner1.east) -- (ycorner2.west);
  \draw(ycorner2.south) -- (ycorner3.north);
  \draw[->](ycorner3) -- (ycorner3 |- box1.south)
node[midway,right]{$y^{(r-1)}$};

  \node(u_l_2)        [left of = box2, xshift = 2ex, yshift = 2ex]{};
  \node(l_r_2)        [right of = box2, xshift = -2ex, yshift = -2ex]{};

  \begin{pgfonlayer}{background}
       \node (system_2)  [metabox, dotted, fit = (u_l_2) (l_r_2), fill=lightgray!20] {};
  \end{pgfonlayer}
  \node (above_box_2) [above of = system_2, yshift = -4.5ex]{internal  dynamics \eqref{eq:QPS}};

  \end{tikzpicture}
}
\end{center}
            \vspace{-0.5cm}
  \caption{Byrnes-Isidori form}
\label{Fig:BIf}
\end{figure}
In summary, given a linear system $(A,B,C)$ of relative degree $r\geq 1$, we refer to its equivalent representation \eqref{eq:ABC_BIform} as its (essentially unique)
{\em Byrnes-Isidori form}.
The signal flow  for a system in Byrnes-Isidori form~\eqref{eq:ABC_BIform} is depicted in Fig.~\ref{Fig:BIf}.

\subsubsection{Zero dynamics}
\noindent
{Next, for the linear system~\eqref{eq:ABC},} we address the
following question: if the initial data and input are such that the output vanishes identically, what is the nature of the residual internal dynamic behaviour?
With this in mind, we proceed to define the {\it zero dynamics} $\mathcal{ZD}(A,B,C)$ of~\eqref{eq:ABC}.
Recall that
${\mathcal U}:=\mathcal L_{\text{loc}^\infty}(\R_{\ge 0},\R^m)$
 and,
for notational convenience, we write ${\mathcal X}:= {\mathcal{AC}}_{\rm loc}(\R_{\ge 0},\R^n)$.   Then
\[
\mathcal{ZD}(A,B,C) :=
 \setd{\! (x,u) \in  \mathcal{X}\times \mathcal{U} }{ \!\!\!\begin{array}{l}\dot x(t)=Ax(t)+Bu(t)\\ \text{a.e.},\ Cx(\cdot) = 0\end{array}\!\!\!\!}\!.
\]
Equivalently, the zero dynamics may be viewed as the solution space of the differential-algebraic system
\[
\frac{\dd}{\dd t}\begin{bmatrix}I & 0\\0&0\end{bmatrix}\begin{pmatrix}x(t)\\u(t)\end{pmatrix}= \begin{bmatrix} A & B\\C&0\end{bmatrix}\begin{pmatrix}x(t)\\u(t)\end{pmatrix}
\]
and so
\[
\mathcal{ZD}(A,B,C)= \text{ker}_{\mathcal{X} \times \mathcal{U}}\begin{bmatrix}A-\frac{\dd}{\dd t}I&B\\C&0\end{bmatrix}.
\]
The zero dynamics $\mathcal{ZD}(A,B,C)$ are said to be
\begin{itemize}
\item
{\it bounded}, if $(x,u)\in\cL^\infty(\R_{\ge 0},\R^n\times\R^m)$ for all $(x,u)\in {\mathcal {ZD}}(A,B,C)$;
\item
{\it asymptotically stable}, if for all $(x,u)\in {\mathcal {ZD}}(A,B,C)$ we have $x(t)\to 0$ as $t\to\infty$ and
$\text{ess\,sup}_{\tau\geq t}\|u(\tau)\| \to 0$ as $t\to\infty$.
\end{itemize}

Assume that system \eqref{eq:ABC} has relative degree~{$r\in\N$.}
Let $(Q,P,S)$ be the essentially unique representation of the internal dynamics. If $(x,u)\in{\mathcal ZD}(A,B,C)$, then, in view of the
Byrnes-Isidori form~\eqref{eq:ABC_BIform}, we may infer that $C_r x(\cdot ) = 0$, $\eta (\cdot ) = Vx(\cdot )$ satisfes $\dot \eta(\cdot ) =Q\eta(\cdot )$, $u(\cdot ) = -\Gamma^{-1}S\eta(\cdot )=-\Gamma^{-1}CA^rx(\cdot )$, and
\begin{align*}
&\mathcal{ZD}(A,B,C)\\
&=\setd{ \!\! (x,-\Gamma^{-1}CA^rx) }{\!\!\! \begin{array}{l} \dot x(t) =(I-B\Gamma^{-1}CA^{r-1})A x(t),\\
x(0)\in \bigcap\limits_{k=0}^{r  -1} \ker CA^k\end{array}\!\!\!\!}.
\end{align*}
From the Byrnes-Isidori form, we may also infer that
\[
\det\begin{bmatrix} A-sI & B\\C&0\end{bmatrix}=\det(\Gamma) \det(Q-sI) \in\R[s].
\]
Some immediate consequences of these inferences are recorded in the following.

\begin{prop}[\textbf{Relative degree and zero dynamics}]
\label{Prop:zero_dyn-srd}
Assume that system~\eqref{eq:ABC} has relative degree $r \in\N$.  Let $Q$ (unique up to similarity) be the internal loop matrix as in~\eqref{eq:QPS}.
Then  the zero dynamics $\mathcal{ZD}(A,B,C)$ are bounded if, and only if,
for all $\lambda\in\sigma(Q)$ we have $\Re \lambda \le 0$ and, if $\Re \lambda = 0$, then~$\lambda$ is semisimple. Moreover,  the following statements are equivalent:
\begin{enumerate}[$\bullet$]
\item the zero dynamics  $\mathcal{ZD}(A,B,C)$ are asymptotically stable;
\item $\sigma(Q){\subset} {\mathbb{C}}_{<0}$;
\item
$\forall\,\lambda\in{\mathbb{C}}_{\ge 0}:\ \det \begin{bmatrix}A-\lambda I & B \\ C & 0\end{bmatrix}\neq 0$.
\end{enumerate}
\end{prop}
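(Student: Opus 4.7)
The plan is to push everything through the Byrnes-Isidori form already set up in~\eqref{eq:ABC_BIform}. The reduction work has essentially been done in the paragraphs preceding the proposition: any $(x,u)\in\mathcal{ZD}(A,B,C)$ corresponds, in Byrnes-Isidori coordinates, to a trajectory with $\xi_1(\cdot)=\cdots=\xi_r(\cdot)=0$, with $\eta(\cdot)=Vx(\cdot)$ satisfying $\dot\eta=Q\eta$, and with $u(\cdot)=-\Gamma^{-1}S\eta(\cdot)$. Conversely, every solution $\eta$ of $\dot\eta=Q\eta$ lifts to an element of $\mathcal{ZD}(A,B,C)$ via $x=U^{-1}(0,\eta^\top)^\top$ and $u=-\Gamma^{-1}S\eta$. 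Since $U$ and $\Gamma$ are invertible and $S$ is a constant matrix, the maps $\eta\mapsto x$ and $\eta\mapsto u$ are bounded linear; hence $(x,u)$ is bounded (resp.\ converges to zero) if and only if $\eta$ is.

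First I would prove the boundedness characterization. By the above reduction, $\mathcal{ZD}(A,B,C)$ is bounded iff every solution of $\dot\eta=Q\eta$ is bounded on $\R_{\ge 0}$. This is a standard fact from linear ODE theory: putting $Q$ in Jordan form, the entries of $\emath^{Qt}$ are polynomial combinations of $t^j\emath^{\lambda t}$ for $\lambda\in\sigma(Q)$, so uniform boundedness for every initial condition is equivalent to $\Re\lambda\le 0$ for all $\lambda\in\sigma(Q)$ together with semisimplicity of any~$\lambda$ on the imaginary axis (otherwise a Jordan block of size $\ge 2$ would produce an unbounded $t^j$ factor).

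Next, for the three-way equivalence of asymptotic stability. The implication ``$\mathcal{ZD}$ asymptotically stable $\Leftrightarrow$ $\sigma(Q)\subset\Cm$'' again follows from the reduction: $\eta(t)\to 0$ for all initial data iff $\sigma(Q)\subset\Cm$; in that case $x(t)\to 0$, and since $u(t)=-\Gamma^{-1}S\eta(t)$ is a continuous function decaying exponentially, $\esup_{\tau\ge t}\|u(\tau)\|\to 0$ as~$t\to\infty$. For the third equivalent statement I invoke the determinantal identity
\[
\det\begin{bmatrix} A-sI & B\\ C & 0\end{bmatrix}=\det(\Gamma)\,\det(Q-sI)\in\R[s]
\]
recorded immediately before the proposition: since $\Gamma$ is invertible, the left-hand side vanishes at $\lambda\in\C$ if and only if $\lambda\in\sigma(Q)$. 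Thus the non-vanishing condition on $\C_{\ge 0}$ is literally the statement $\sigma(Q)\cap\C_{\ge 0}=\emptyset$, closing the loop.

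I do not anticipate a serious obstacle: the hard analytic work was absorbed into the construction of the Byrnes-Isidori form and the derivation of the determinantal identity, both already on the page. The only point requiring a little care is the $\esup$ condition in the definition of asymptotic stability of the zero dynamics, but because $u$ is obtained from $\eta$ by a bounded linear map and $\eta$ decays exponentially when $\sigma(Q)\subset\Cm$, this is immediate; conversely, if $\sigma(Q)\not\subset\Cm$, choosing $\eta(0)$ in a non-decaying eigendirection produces an $(x,u)\in\mathcal{ZD}(A,B,C)$ for which neither $x(t)\to 0$ nor the essential supremum of $\|u\|$ on tails decays to zero.
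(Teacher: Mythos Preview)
Your proposal is correct and follows precisely the approach the paper intends: the proposition is stated as an ``immediate consequence'' of the preceding reduction of $\mathcal{ZD}(A,B,C)$ to $\dot\eta=Q\eta$ and of the determinantal identity $\det\begin{smallbmatrix}A-sI&B\\C&0\end{smallbmatrix}=\det(\Gamma)\det(Q-sI)$, and you have supplied exactly those standard linear-ODE details. One cosmetic point: in your final sentence the claim that $\esup_{\tau\ge t}\|u(\tau)\|$ fails to decay is unnecessary (and could be false if $S$ annihilates the relevant eigendirection); the failure of $x(t)\to 0$ alone already contradicts asymptotic stability, so you can drop the ``nor'' clause.
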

\ \\
{We now introduce a second structural assumption.
\\[1ex]
\textbf{(SA2)} \quad The zero dynamics $\mathcal{ZD}(A,B,C)$ are asymptotically stable.
}

\subsubsection{High-gain stabilizability}
\noindent
A further  structural property exhibited by linear systems of the form \eqref{eq:ABC}~-- in the relative degree one case with asymptotically stable zero dynamics~--
is \emph{high-gain stabilizability} by output feedback.  In particular, if all eigenvalues of $CB$ have positive real part and the zero dynamics $\mathcal{ZD}(A,B,C)$ are asymptotically stable,
then there exists $k^*>0$ such that, for each fixed $k\ge k^*$, the output feedback $u(t)=-ky(t)$, renders the closed-loop system
$\dot x(t) =(A-kBC)x(t)$ asymptotically stable, i.e., $\sigma(A-kBC)\subseteq {\mathbb{C}}_{<0}$.
This is
the multivariable counterpart of the high-gain property for the scalar prototype of Section~\ref{hgas} and is~-- in different words~-- the content of the following lemma (see,~\cite[Lem.~2.2.7]{Ilch93}).
\begin{lem}[\textbf{High-Gain Lemma}]
\label{Lem:high-gain}
Consider a system~\eqref{eq:ABC} which satisfies~{(SA2)} and assume that $\sigma (CB)\subset{\mathbb{C}}_{>0}$. Then there exists $k^*> 0$ such that, for each fixed $k\ge k^*$, we have
\[
\sigma (A-kBC)  \subset {{\mathbb{C}}_{< 0}}\,.
\]
\end{lem}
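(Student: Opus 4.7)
The plan is to pass to a Byrnes--Isidori normal form and then analyse the spectrum of the transformed closed-loop matrix via a Schur-complement expansion, followed by a compactness/contradiction argument. Invertibility of $\Gamma:=CB$ --- forced by $\sigma(CB)\subset\mathbb{C}_{>0}$ --- identifies the relative degree of $(A,B,C)$ as one, so the construction in Section~\ref{Ssec:BIF} provides a similarity transformation bringing the triple to
\[
\tilde A=\begin{pmatrix}R & S\\ P & Q\end{pmatrix},\qquad \tilde B=\begin{pmatrix}\Gamma\\ 0\end{pmatrix},\qquad \tilde C=\begin{pmatrix}I_m & 0\end{pmatrix},
\]
and (SA2) combined with Proposition~\ref{Prop:zero_dyn-srd} yields $\sigma(Q)\subset\mathbb{C}_{<0}$. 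Since the spectrum is a similarity invariant, it suffices to analyse $M_k:=\tilde A-k\tilde B\tilde C=\begin{pmatrix}R-k\Gamma & S\\ P & Q\end{pmatrix}$.

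For any $s\in\mathbb{C}_{\ge 0}$ the block $sI-Q$ is invertible, and the Schur-complement identity gives
\[
\det(sI-M_k)=\det(sI-Q)\cdot\det\bigl(sI-R+k\Gamma-S(sI-Q)^{-1}P\bigr).
\]
The first factor never vanishes on $\mathbb{C}_{\ge 0}$, so the task reduces to showing that the second factor has no zero in $\mathbb{C}_{\ge 0}$ once $k$ is sufficiently large. I would argue by contradiction: suppose there exist $k_n\to\infty$ and $s_n\in\mathbb{C}_{\ge 0}$ annihilating the second factor, and factor $k_n^m$ out of the determinant to rewrite the equation as
\[
\det\bigl(\Gamma+k_n^{-1}[s_nI-R-S(s_nI-Q)^{-1}P]\bigr)=0.
\]

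The argument is then completed by splitting on the boundedness of $(s_n)$. If $(s_n)$ is bounded, pass to a subsequence with $s_n\to s^\star\in\mathbb{C}_{\ge 0}$; the bracket remains bounded, $k_n^{-1}\to 0$, and continuity of the determinant forces $\det\Gamma=0$, contradicting invertibility of~$\Gamma$. If $|s_n|\to\infty$, introduce $\mu_n:=s_n/k_n$ (so $\operatorname{Re}\mu_n\ge 0$) and rearrange to
\[
\det\bigl(\Gamma+\mu_nI-k_n^{-1}R-k_n^{-1}S(s_nI-Q)^{-1}P\bigr)=0;
\]
since $(s_nI-Q)^{-1}\to 0$ and $k_n^{-1}\to 0$, the last two summands vanish in the limit. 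If a subsequence of $(\mu_n)$ converges to some finite $\mu^\star$, then $\operatorname{Re}\mu^\star\ge 0$ while $\det(\Gamma+\mu^\star I)=0$ forces $-\mu^\star\in\sigma(CB)\subset\mathbb{C}_{>0}$, a contradiction; otherwise $|\mu_n|\to\infty$ and an additional division by $\mu_n^m$ collapses the limiting determinant to $\det I=1$, equally impossible.

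The main obstacle will be the unbounded branch: one must track those eigenvalues of $M_k$ which escape to infinity along the ``root-locus'' direction $s\sim-k\,\sigma(CB)$, and the correct scaling is the simultaneous one, $s_n=\mu_nk_n$. Once this scaling has been identified, the sign hypothesis $\sigma(CB)\subset\mathbb{C}_{>0}$ furnishes the geometric obstruction that keeps the runaway eigenvalues out of the closed right half-plane, while the bounded branches simply converge to the (harmless) set $\sigma(Q)\subset\mathbb{C}_{<0}$.
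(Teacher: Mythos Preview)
The paper does not actually prove Lemma~\ref{Lem:high-gain}; it merely cites \cite[Lem.~2.2.7]{Ilch93} and remarks that the result ``does not play an explicit role'' but ``implicitly underpins much of the underlying intuition''. So there is no in-paper proof to compare against.

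Your argument is correct and is essentially the standard root-locus proof one finds in the cited reference: pass to the relative-degree-one Byrnes--Isidori form (legitimate since $\Gamma=CB$ is invertible), reduce via the Schur complement over the Hurwitz block $sI-Q$, and then run a compactness/contradiction argument on a hypothetical sequence $(k_n,s_n)$ with $s_n\in\mathbb{C}_{\ge 0}$. The two-scale split --- bounded $s_n$ forcing $\det\Gamma=0$, unbounded $s_n$ handled by the scaling $\mu_n=s_n/k_n$ and the sign condition $\sigma(CB)\subset\mathbb{C}_{>0}$ --- is exactly the mechanism by which the $n-m$ ``slow'' closed-loop eigenvalues converge to $\sigma(Q)\subset\mathbb{C}_{<0}$ while the $m$ ``fast'' ones escape along $-k\,\sigma(CB)$. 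One cosmetic point: your case dichotomy should read ``either $(s_n)$ has a bounded subsequence or $|s_n|\to\infty$'' (and likewise for $(\mu_n)$), but since you are arguing by contradiction you may freely pass to subsequences, so this does not affect the validity.
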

Whilst Lemma~\ref{Lem:high-gain} does not play an explicit role in the ensuing exposition of funnel control, it implicitly
underpins much of the underlying intuition and early development of the funnel methodology.

\newcommand{\lincl}{\cL^{m,r} }
\subsubsection{Class $\mathbf{\cL}^{m,r}$ of linear systems amenable to funnel control}\label{Sec:Lmr}
\noindent
We summarize and close the above discussion with the following
description of a class of linear systems of form~\eqref{eq:ABC} which are amenable to control by funnel techniques in the sense that
he controllers developed in later sections are applicable.  This class comprises systems $(A,B,C)$ of form~\eqref{eq:ABC}
with known relative degree~$r$ (assumption~(SA1)), {with asymptotically stable zero dynamics (assumption~(SA2)), and which
satisfies our third structural assumption
(a higher-dimensional  counterpart of assumption~\eqref{eq:cbnot0}):
\\[1ex]
\textbf{(SA3)}  \quad
$ \forall\, v \in \R^m: \ \ v^\top \Gamma v =0~~\iff~~v=0$.
\\[1ex]
Assumption~(SA3) means that~$\Gamma$ is {\it sign-definite} and, stated otherwise, it is equivalent to the requirement that either $\Gamma + \Gamma^\top \succ 0$ or $-(\Gamma + \Gamma^\top) \succ 0$ (but which of these two possible polarities holds is not known to the controller).}
In particular, we {define the system class}
\begin{equation}\label{eq:Lmr}
\lincl \!:=\!
\setd{\!\!\!\begin{array}{l} (A,B,C)\\ \in\!\R^{n\times n}\!\times\!\R^{n\times m}\!\times\!\R^{m\times n}\end{array}\!\!\!}{\!\!\!
\begin{array}{l}
n\!\in\!\N, \text{(SA1),~(SA2)},\\
\text{and (SA3) hold}
\end{array}\!\!\!}.
\end{equation}

\subsection{Nonlinear functional differential systems}\label{Ssec:NonlFuncDiffSys}
\noindent
The notions of relative degree, control direction, and zero dynamics -- introduced in the context of finite-dimensional linear ODE systems -- when suitably
generalized underpin requisite structural assumptionsfor successful application of funnel control to more diverse classes of systems.

For the sake of motivation, consider again a linear system~\eqref{eq:ABC} with relative degree~$r\in\N$ in Byrnes-Isidori form~\eqref{eq:ABC_BIform}.
With  its internal  dynamics~\eqref{eq:QPS} we may associate a linear operator
\begin{equation}\label{eq:L}
L\colon  y(\cdot) \mapsto \left( t \mapsto   \int_0^t S {\rm e}^{Q(t-\tau)} Py(\tau)\, \dd \tau \right).
\end{equation}
With initial data $\eta(0)=\eta^0 := Vx^0$ and $d(\cdot) := S{\rm e}^{Q \cdot} \eta^0$, the output $z(\cdot )$ of~\eqref{eq:QPS} is
given by
\[
z(t) = d(t) +L(y)(t).
\]
Introducing the (linear) operator
\begin{equation}\label{eq:opT-lin}
\begin{aligned}
&\fT\colon \cC(\R_{\ge 0},\R^{rm})\to \cL^\infty_{\text{loc}}(\R_{\ge 0},\R^m),\\
&\zeta =(\zeta_1,\ldots,\zeta_r)
\mapsto
\left(  t\mapsto \sum_{k=1}^{r}  R_k  \zeta_k(t) + L(\zeta_1)(t) \right),
\end{aligned}
\end{equation}
it follows from~\eqref{eq:ABC_BIform}  that~\eqref{eq:ABC} is equivalent to the functional differential system
\begin{equation}\label{eq83:T}
\left.\begin{array}{l}y^{(r )}(t)  =  d(t)+\fT(y,\ldots,y^{(r-1)})(t)+\Gamma u(t)
\\[1ex]
y(0)=Cx^0, \ldots ,y^{(r-1)}(0)=CA^{r-1}x^0.
\end{array}\right\}
\end{equation}

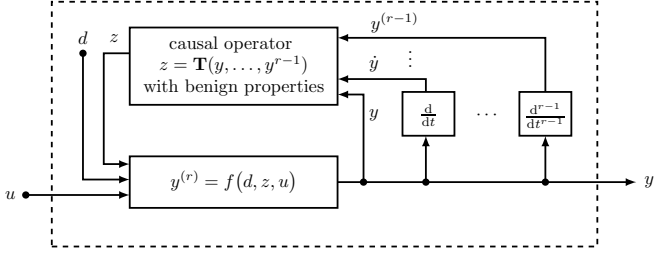
\begin{figure}[b]
\begin{center}
\resizebox{\columnwidth}{!}{
\begin{tikzpicture}[thick,node distance = 12ex, box/.style={fill=white,rectangle, draw=black}, blackdot/.style={inner sep = 0, minimum size=3pt,shape=circle,fill,draw=black},plus/.style={fill=white,circle,inner
sep = 0,thick,draw},metabox/.style={inner sep = 3ex,rectangle,draw,dashed,fill=gray!20!white}]
\tikzset{>=latex}
\tikzset{every path/.append style={line width=1pt}}
\draw(-1,1) rectangle (3,2.5);
\node at (1,1.75){$\begin{array}{c}
\text{causal operator }\\
z= \fT(y,\ldots, y^{r-1})\\
\text{with benign properties}\end{array}$};
\draw(-1,-1) rectangle (3,0);
\node at (1,-.5){$y^{(r)} = f\big(d,z,u\big)$};
\draw (4.25, .4) rectangle (5.25,1.25);
\node at (4.75,.8){$\frac{\text{d}}{\text{d}t}$};
\draw (6.5,.4) rectangle (7.5,1.25);
\node at (7,.8){$\frac{\text{d}^{r -1}}{\text{d} t^{r -1}}$};
\node at (5.9,.8){$\cdots$};
\draw[->] (3,-.5)--(8.75,-.5);
\node at (9,-.5){$y$};
\draw[->](-3,-.75)--(-1,-.75);
\draw[fill] (-3,-.75) circle[radius=1.5pt];
\node at (-3.3,-.75){$u$};
\draw[dashed] (-2.5,-1.75) rectangle (8,3);
\draw[->](-1,2)--(-1.5,2)--(-1.5,-.15)--(-1,-.15);
\draw[->](-1.9,2)--(-1.9,-.45)--(-1,-.45);
\draw[fill] (-1.9,2) circle[radius=1.5pt];
\node at (-1.9,2.3){$d$};
\node at (-1.3,2.3){$z$};
\draw[fill] (3.5,-.5) circle[radius=1.5pt];
\draw[->](3.5,-.5)--(3.5,1.2)--(3,1.2);
\draw[fill] (4.7,-.5) circle[radius=1.5pt];
\draw[->](4.7,-.5)--(4.7,.4);
\draw[->](4.7,1.25)--(4.7,1.5)--(3,1.5);
\draw[fill] (7,-.5) circle[radius=1.5pt];
\draw[->](7,-.5)--(7,.4);
\draw[->](7,1.25)--(7,2.3)--(3,2.3);
\node at (3.7,.8){$y$};
\node at (3.7,1.8){$\dot y$};
\node at (4.1,2.6){$y^{(r-1)}$};
\node at (4.4,2){$\vdots$};
\end{tikzpicture}
}
\end{center}
            \vspace{-0.5cm}
  \caption{Structure of nonlinear functional differential systems}
\label{Fig:gensys}
\end{figure}

Albeit a functional differential form, the advantage of~\eqref{eq83:T} is that it is an
$r$th-order functional differential equation in the variable~$y(\cdot)$ only.
This representation is the key to extending the results to more general
situations, in particular to nonlinear and infinite-dimensional systems with the structure depicted in Fig.~\ref{Fig:gensys}, with
appropriate hypotheses (to be elucidated in due course) on the causal operator~$\fT$ and the nonlinear function~$f$.

\subsubsection{Benign operators}
\noindent
Next, we make precise what we mean by a ``causal operator with benign properties".   Two fundamental requirements are causality and bounded-input, bounded-output behaviour of the operator.
Causality we impose without further comment (other than to say that, throughout,  we assume that the underlying systems are nonanticipative).  Bounded-input, bounded-output behaviour may be regarded as a counterpart of the assumption of asymptotically stable zero dynamics~(SA3).
Linearity of the operator is not required.  Instead, we impose only a local Lipschitz condition which plays a role in ensuring well-posedness of the
underlying system under feedback control.  In particular, we introduce the following class of operators.

\begin{defn}[\textbf{Operator class $\cTT$}]\label{Def:op-class}
For $n,q\in\N$ and $h \geq 0$ the set $\cTT$ denotes the class of operators $\fT\colon \cCC\to \cLL$ with the following properties.
\begin{enumerate}[\hspace{2pt}\textbf{(TP1)}]
\item[\textbf{(TP1)}]
 {\it Causality:} $\fT$ is causal, that is, for all $\zeta$, $\theta \in \cCC$ and all $t\ge 0$,
\[
\zeta|_{[-h,t]} =\theta|_{[-h,t]} ~~\implies~~ \fT(\zeta)|_{[0,t]}=\fT(\theta)|_{[0,t]}.
\]
\item[\textbf{(TP2)}]
 {\it Local Lipschitz property:} for each $t\ge 0$ and all $\xi\in\cC([-h,t],\R^{n})$, there exist positive constants $c_0, \delta, \tau >0$ such that, for all $\zeta_1,\zeta_2 \in \cCC$ with $\zeta_i|_{[-h,t]} = \xi$
and $\|\zeta_i(s)-\xi(t)\|<\delta$ for all $s\in[t,t+\tau]$ and $i=1,2$, we have
\begin{multline*}
\esup\nolimits_{s\in [t,t+\tau]}\|\fT(\zeta_1 )(s)-\fT(\zeta_2) (s)\| \\ \leq c_0 \sup\nolimits_{s\in [t,t+\tau]}\|\zeta_1(s)-\zeta_2(s)\|.
\end{multline*}
\item[\textbf{(TP3)}]
 {\it Bounded-input bounded-output (BIBO) property:}  for each $c_1 >0$, there exists $c_2 >0$ such that, for all $\zeta\in\cCC$,
\begin{multline*}
\sup\nolimits_{t\in[-h,\infty)}\|\zeta(t)\|< c_1 \\
\implies~~ \esup\nolimits_{t\ge 0}\|\fT(\zeta)(t)\| < c_2.
\end{multline*}
\end{enumerate}
\end{defn}
\noindent This formulation embraces nonlinear delay elements and hysteretic effects, as we shall briefly illustrate.
\\[1ex]
{\em Nonlinear delay elements.} \quad
For $i=0,\ldots,k$, let $\Psi_i\colon\R\times \R^m\to \R^q$ be measurable in its
first argument and locally
Lipschitz in its second argument, uniformly with respect to its first argument.  Precisely, 
 for each $\xi\in\R^m$, $\Psi_i(\cdot,\xi)$ is
measurable, and
for every compact $C\subset\R^m$, there exists~$c>0$ such that for a.a.\ $t\in\R$ we have
\[
\forall\, \xi_1,\xi_2\in C: \
\|\Psi_i(t,\xi_1)-\Psi_i(t,\xi_2)\|\leq c\|\xi_1-\xi_2\|.
\]
Let $h_i >0$, $i=0,\ldots,k$, and set $h:= \max_i h_i$.
For $y\in \cC([-h,\infty),\R^m)$ and $t\ge 0$ let
\[
\fT(y)(t):=
\int_{-{h_0}}^0 \Psi_0(s,y(t+s))\, \dd s
+  \sum_{i=1}^k \Psi_i (t,y(t-h_i)).
\]
The operator~$\fT$, so defined (which models distributed and point delays),
is of class~${{\mathbb T}^{m,q}_{h}}$; for details, see~\cite{RyanSang01}.
\\[1ex]
{\em Hysteresis.}  \quad
 A large class of nonlinear operators
$\fT:\cC(\R_{\ge 0},\R)\to \cC(\R_{\ge 0},\R)$,
which includes many physically-motivated
hysteretic effects, is defined in~\cite{LogeMawb00}.  These operators are contained in the class~${{\mathbb T}^{1,1}_{0}}$ of the present paper.
Specific examples include relay hysteresis, backlash hysteresis, elastic-plastic hysteresis, and Preisach operators.
For further details, see~\cite{IlchRyan02a}.

\subsubsection{Admissible nonlinearities}
\noindent
Next, and with reference to Figure \ref{Fig:gensys}, we proceed to make precise the admissible nonlinearities~$f$.

\begin{defn}[\textbf{Class of nonlinearities $\cNN$}]\label{adnon}
For $p,q,m\in\N$ the set $\cNN$ denotes the class of
functions $f\in\cC (\R^p\times\R^q \times\R^m, \R^m)$ with the following property.
\begin{enumerate}[\hspace{2pt}\textbf{(NP1)}]
\item[\textbf{(NP1)}]
  {There exists $v^*\in(0,1)$ such that, for every compact  $K_p\subset \R^p$ and compact $K_q\subset\R^q$ the continuous function $\chi\colon\R\to\R$ defined by
\[
\hspace*{-3mm} s \!\mapsto\! \min\setd{\! \langle v,f(\delta,z,-s v)\rangle }{\!\!\!\begin{array}{l} (\delta,z)\in K_p\times K_q,\\
v\in\R^m,~v^* \leq \|v\| \leq 1 \end{array}\!\!\!}
\]
is such that $\sup_{s\in\R} \chi(s)=\infty$.}
\end{enumerate}
\end{defn}

\noindent
 Property~{(NP1)}   may appear somewhat arcane.
 It becomes more transparent when interpreted in a linear context wherein the following holds~\cite{BergIlch21}.

 \begin{prop}\label{np1-equiv}
Let~$L_1\in\R^{m\times p}$, $L_2\in\R^{m\times q}$ and~$\Gamma\in\R^{m\times m}$.
Then the linear map $f:\R^p\times\R^q \times\R^m\to \R^m,\ (\delta,z,v)\mapsto L_1\delta+L_2 z+\Gamma v$,
satisfies:
\[   \text{$f$ has  property~{\rm (NP1)}}
 \iff
\text{$\Gamma$ \ is sign-definite.}
\]
\end{prop}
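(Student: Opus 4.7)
The plan is to substitute the linear form $f(\delta,z,v)=L_1\delta+L_2 z+\Gamma v$ into property~(NP1) and reduce everything to the behaviour of the quadratic form $q(v):=\langle v,\Gamma v\rangle = \tfrac12 v^\top(\Gamma+\Gamma^\top)v$ on the annulus $A:=\{v\in\R^m:v^*\le\|v\|\le 1\}$. Because
\[
\langle v,f(\delta,z,-sv)\rangle = \langle v,L_1\delta\rangle + \langle v,L_2 z\rangle - s\,q(v),
\]
the first two terms are bounded in absolute value by some constant $M=M(K_p,K_q)$ uniformly over $A$ (by Cauchy-Schwarz together with $\|v\|\le 1$ and compactness of $K_p,K_q$), so the behaviour of $\chi(s)$ for large $|s|$ is governed entirely by $-s\,q(v)$.

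For the forward implication I would fix any $v^*\in(0,1)$ and treat the case $\Gamma+\Gamma^\top\succ 0$ first (the case $-(\Gamma+\Gamma^\top)\succ 0$ is symmetric under $s\leftrightarrow -s$). Positive definiteness supplies a constant $\mu>0$ with $q(v)\ge\mu\|v\|^2\ge\mu(v^*)^2$ for all $v\in A$, so for every $s\le 0$
\[
\chi(s)\ \ge\ -M-s\,\mu(v^*)^2,
\]
and the right-hand side tends to $+\infty$ as $s\to-\infty$. Hence $\sup_{s\in\R}\chi(s)=\infty$, which is (NP1).

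For the converse I would argue contrapositively: suppose $\Gamma$ is not sign-definite, so $\Gamma+\Gamma^\top$ is either indefinite or singular (positive or negative semi-definite). I choose the smallest possible compact sets, $K_p=\{0\}$ and $K_q=\{0\}$, which makes the linear terms vanish and reduces $\chi$ to $\chi(s)=\min_{v\in A}(-s\,q(v))$. If $\Gamma+\Gamma^\top$ is indefinite, pick unit vectors $v_+,v_-\in A$ with $q(v_+)>0>q(v_-)$; then $\chi(s)\le -s\,q(v_+)\le 0$ for $s\ge 0$ and $\chi(s)\le -s\,q(v_-)\le 0$ for $s\le 0$. If $\Gamma+\Gamma^\top$ is semi-definite but singular, pick a unit-norm kernel vector $v_0\in A$; since $q(v_0)=0$, we get $\chi(s)\le 0$ for every $s$. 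In every case $\sup_s\chi(s)\le 0<\infty$, contradicting (NP1).

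The main obstacle I anticipate is precisely the semi-definite-but-singular branch of the converse: there $q$ is nonnegative on all of $\R^m$ yet admits no uniform positive lower bound on $A$, so the strategy that worked in the definite case collapses. The fix is to exhibit a genuine unit-norm kernel vector of $\Gamma+\Gamma^\top$ lying in the prescribed annulus $A$; because $v^*\in(0,1)$ is fixed in advance but any such kernel vector can be normalised to have norm exactly $1$, this vector automatically lies in $A$, and the remainder is a routine compactness-continuity argument.
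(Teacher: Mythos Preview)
Your argument is correct. The substitution $\langle v,f(\delta,z,-sv)\rangle = \langle v,L_1\delta+L_2 z\rangle - s\,q(v)$ together with the uniform bound $M$ on the linear terms reduces everything to the sign behaviour of $q$ on the annulus, and both directions go through as you describe. One small simplification: in the converse you do not need the indefinite/semi-definite case split, since ``$\Gamma$ not sign-definite'' is by definition (see~(SA3) and the comment following it) the existence of a nonzero $v_0$ with $q(v_0)=0$; normalising $v_0$ to unit length and observing $v_0\in A$ immediately gives $\chi(s)\le 0$ for all $s$.

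As to comparison: the paper does not give its own proof of this proposition but cites~\cite{BergIlch21} for it, so there is no in-paper argument to compare against. Your direct verification is the natural one and is self-contained.
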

\noindent
Thus, {(NP1)} may be regarded as a nonlinear generalization of~(SA3).
If a function $f$ satisfies {(NP1)}, then, for {\em each} pair of compact sets $K_p\subset\R^p$ and $K_q\subset\R^q$, there exists $\sigma\in\{-1,+1\}$
such that
\[
\chi(\sigma s)\to\infty~~\text{as}~ s\to\infty.
\]
If $\sigma\in\{-1,+1\}$ is such that the above holds for {\em all} sets~$K_p$ and~$K_q$, then we refer to~$\sigma$ as the {\it control direction} (a natural
analogue of the term introduced in the context of the prototype linear system~\eqref{abc}.

\subsubsection{Class ${\mathcal N}^{m,r}$ of functional differential systems amenable to funnel control}\label{Sec:Nmr}
\noindent
We summarize the above discussion with the following characterization of a class of nonlinear functional differential systems which will be shown to be amenable to control by funnel techniques.
The system   representative
of this class, parametrized by~$m,r\in\N$, takes the form
\begin{equation}\label{eq:nonlSys}
y^{(r)}(t)= f\big(d(t), \fT(y,\dot{y},\dots,y^{(r-1)})(t), u(t)\big),
\end{equation}
 with initial data
\begin{equation}\label{eq:nonlSysIc}
\left.\begin{array}{ll}
y|_{[-h,0]}= y^0\in \cC^{r-1}([-h,0], \R^m), & \text{if~$h >0$,}
\\[1ex]
\big(y(0),\cdots, y^{(r-1)}(0)\big)=\big(y^0_1,\cdots,y^0_r\big)\in\R^{rm}, &\text{if~$h=0$,}
\end{array}
\!\!\right\}
\end{equation}
where~$h\ge 0$ quantifies the ``memory" in the system and,
for some~$p,q\in\N$, $f\in \cNN$,
$\fT\in {\mathbb T}^{rm,q}_h$, and~$d\in\cL^\infty(\R_{\ge 0},\R^p)$.
The representative system may be identified with a triple~$(d,f,\fT)$ and so we write
\[
{\mathcal N}^{m,r} \!:=\! \setd{(d,f,\fT) }{\!\!\!\begin{array}{l}  d\in\cL^\infty(\R_{\ge 0},\R^p),\ f\in\cNN,\\ \fT\in {\mathbb T}^{rm,q}_h,\ p,q\in\N,\ h\geq 0\end{array}\!\!\!}\!.
\]
We show that the class of linear systems~$\lincl$,
 as defined in~\eqref{eq:Lmr},
  is indeed contained in the class~$\cN^{m,r}$ for any~$m, r\in\N$, for which we recall that~\eqref{eq:ABC} is equivalent to~\eqref{eq83:T}.
\begin{lem}\label{Lem:lincl-Nmr}
  Let~$(A,B,C)\in \lincl$ for some~$m, r\in\N$, with associated
  Byrnes-Isidori   form~\eqref{eq:ABC_BIform}.
 Let the operator~$\fT$ be as in~\eqref{eq:opT-lin}.
 Define $f\colon \R^m\times\R^m\times\R^m\to\R^m,~(\delta, z, u)\mapsto \delta + z + \Gamma u$.
 Let $\eta^0\in\R^{n-mr}$ be arbitrary and define  $d(\cdot) := S{\rm e}^{Q\,\cdot} \eta^0$.
  Then $(d,f,\fT)\in \cN^{m,r}$.
\end{lem}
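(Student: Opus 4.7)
The plan is to verify the three constituents of membership in $\cN^{m,r}$ (namely $d\in\cL^\infty$, $f\in\cNN$, $\fT\in\mathbb{T}^{rm,m}_0$) in turn, exploiting the structural assumptions (SA1)-(SA3) encoded in $\lincl$. The identifications are $p=q=m$ and $h=0$.

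First I would address $d\in\cL^\infty(\R_{\ge 0},\R^m)$. By assumption~(SA2), $\cZD(A,B,C)$ is asymptotically stable, so Proposition~\ref{Prop:zero_dyn-srd} yields $\sigma(Q)\subset\mathbb{C}_{<0}$. Therefore $e^{Qt}$ decays exponentially and $d(t)=S e^{Qt}\eta^0$ is not only bounded but integrable; in particular $d\in\cL^\infty(\R_{\ge 0},\R^m)$.

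Next I would check $f\in\cNN$ for $p=q=m$. The function $f(\delta,z,u)=\delta+z+\Gamma u$ is exactly of the linear form covered by Proposition~\ref{np1-equiv} (with $L_1=L_2=I$), so $f$ satisfies property~(NP1) if and only if $\Gamma$ is sign-definite. But sign-definiteness of $\Gamma$ is precisely assumption~(SA3). Hence $f\in\cNN$.

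The core of the argument is verifying $\fT\in\mathbb{T}^{rm,m}_0$, i.e.\ checking (TP1)-(TP3) for the operator
\[
\fT(\zeta)(t)=\sum_{k=1}^r R_k\zeta_k(t)+\int_0^t Se^{Q(t-\tau)}P\zeta_1(\tau)\,\dd\tau.
\]
Causality (TP1) is immediate: the first sum is pointwise, and the integral depends only on $\zeta_1|_{[0,t]}$. The local Lipschitz property (TP2) follows because both summands are linear in $\zeta$: the pointwise part contributes at most $\sum_{k=1}^r\|R_k\|$ times the sup-norm difference; and if $\zeta^{(1)}|_{[0,t]}=\zeta^{(2)}|_{[0,t]}$, then for $s\in[t,t+\tau]$ the integral difference reduces to $\int_t^s Se^{Q(s-\sigma)}P(\zeta_1^{(1)}-\zeta_1^{(2)})(\sigma)\dd\sigma$, which is bounded by $\tau\|S\|\|P\|\sup_{u\in[0,\tau]}\|e^{Qu}\|$ times the sup-norm difference. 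Finally, for (TP3), the pointwise part is trivially bounded when $\zeta$ is bounded; and since $\sigma(Q)\subset\mathbb{C}_{<0}$ guarantees constants $M,\alpha>0$ with $\|e^{Qt}\|\le Me^{-\alpha t}$, one has
\[
\left\|\int_0^t Se^{Q(t-\tau)}P\zeta_1(\tau)\dd\tau\right\|\le \|S\|\|P\|\tfrac{M}{\alpha}\,\|\zeta_1\|_\infty,
\]
giving the requisite BIBO estimate with $c_2:=\bigl(\sum_{k=1}^r\|R_k\|+\|S\|\|P\|M/\alpha\bigr)c_1$.

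There is no real obstacle: the whole lemma is essentially a bookkeeping exercise that shows the three structural assumptions (SA1)-(SA3) translate directly into the three operator/nonlinearity axioms. The one substantive ingredient is that (SA2) gives $\sigma(Q)\subset\mathbb{C}_{<0}$ via Proposition~\ref{Prop:zero_dyn-srd}, which is what makes the convolution operator $L$ BIBO and renders the exogenous signal $d$ bounded.
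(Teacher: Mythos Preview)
Your proof is correct and follows essentially the same structure as the paper's: boundedness of $d$ via (SA2) and Proposition~\ref{Prop:zero_dyn-srd}, membership $f\in\cNN$ via (SA3) and Proposition~\ref{np1-equiv}, and verification of (TP1)--(TP3) for $\fT$. The only difference is that for (TP3) you give a direct elementary estimate using the Hurwitz property of $Q$, whereas the paper first notes invertibility of the transfer function and then defers to \cite[Sec.~2.12]{BergIlch21}; your self-contained argument is perfectly adequate here and arguably cleaner for this specific linear setting.
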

\begin{proof} Clearly, $d$ is bounded by~{(SA2)} and Proposition~\ref{Prop:zero_dyn-srd}. We show that $\fT \in {\mathbb T}^{rm,m}_0$. It is easy to see that the operator~$\fT$ satisfies properties~(TP1) and~(TP2) of the class~${\mathbb T}^{rm,m}_0$. The BIBO property~(TP3) is closely related to
property~{(SA2)} of the system $(A,B,C)$. First observe that the transfer function $C(sI-A)^{-1}B \in\R(s)^{m\times m}$ of $(A,B,C)$ is invertible over~$\R(s)$ by~\eqref{BIformG}, since $\Gamma$ is invertible. It is then shown in~\cite[Sec.~2.12]{BergIlch21} that $\fT$ satisfies~(TP3).

  Finally, to conclude that $f\in\fN^{m,m,m}$,
  it suffices to note that, by~{(SA3)} and Proposition~\ref{np1-equiv}, (NP1) holds.
\end{proof}
\subsubsection{Input nonlinearities}\label{Ssec:InputNonl}
\noindent
In addition to accommodating  the issue of (unknown)
control direction, the generic formulation~\eqref{eq:nonlSys} with associated property (NP1) encompasses a wide variety of input nonlinearities.  Consideration of a scalar
system of the form
\begin{equation}\label{eq:f1f2}
 \dot y(t)=f_1(y(t))+f_2(y(t)) \, \beta (u(t))
\end{equation}
with $f_1\in \cC(\R,\R)$, $f_2\in \cC(\R,\R\backslash\{0\})$ and $\beta\in \cC(\R,\R)$, will serve to illustrate this variety.  The assumption that $f_2$ is a non-zero-valued continuous
function ensures a well-defined control direction (unknown to the controller). Without loss of generality, we may assume that~$f_2\in \cC(\R,\R_{>0})$;
if~$f_2$ is negative-valued, then, in~\eqref{eq:f1f2}, simply replace~$f_2$ by~$-f_2$ and~$\beta$ by~$-\beta$.
We impose the following conditions on~$\beta\in \cC(\R,\R)$:
\begin{equation}\label{betaprops}
\beta {\text{ is surjective, with}}~~|\beta (\tau)| \to \infty ~~{\text{as}}~|\tau|\to\infty,
\end{equation}
which is equivalent to requiring that one of the following conditions hold:
\[
   \lim_{\tau\to\pm \infty} \beta(\tau) = \pm \infty\quad \text{or} \quad \lim_{\tau\to\pm \infty} \beta(\tau) = \mp \infty.
\]
Under these conditions, it is shown in~\cite[Sec.~2.3]{BergIlch21} that system~\eqref{eq:f1f2} has property~(NP1). Specific examples of functions~$\beta$ satisfying the above requirements are linear functions $\beta(u) = au +b$ with $a,b\in\R$ and $a\neq 0$, signed squares $\beta(u) = a \sgn(u) u^2$ with $a\neq 0$, dead-zone input nonlinearities (as discussed in the following subsection) and combinations thereof.

\subsubsection{Dead-zone input} \label{Ssec:DeadZone}
\noindent
An important example of a nonlinearity
$\beta=D$ with properties~\eqref{betaprops} is a so-called \emph{dead-zone input}  of the form
\[
   D : \R\to \R, \quad v\mapsto
    D(v) = \left\{
    \begin{array}{rcl} D_r(v), && v\ge b_r,\\ 0,&& b_l<v<b_r,\\ D_l(v), && v\le b_l\end{array}\right.
\]
with unknown deadband parameters~$b_l<0<b_r$ and unknown functions $D_l,D_r\in\cC(\R,\R)$ which satisfy,
 for unknown $\sigma\in\{-1,1\}$,
\begin{multline*}
D_l(b_l) = D_r(b_r) = 0 \\ \text{and} \qquad
\lim_{s\to\infty} \sigma D_r(s) = \infty, \quad \lim_{s\to-\infty} \sigma D_l(s) = -\infty.
\end{multline*}
Note that the above assumptions allow for a much larger class of
functions~$D_l, D_r$ compared to e.g.~\cite{Na13}, where assumptions on their derivatives are used.
In particular,~in the present context, $D_l$ and~$D_r$ need not be differentiable or monotone.

\subsubsection{A special subclass of ${\mathcal N}^{m,r}$}\label{Sssec:special-sc}
\noindent
For later use in the context of funnel control with non-derivative feedback, we introduce a subclass of ${\mathcal N}^{m,r}$.
The systems to be studied are affine in the control and are represented by
functional differential equations, with $\R^m$-valued input~$u$ and output~$y$, of the form
\[
y^{(r)}(t)=\hat f(d(t),\hat{\mathbf T}(y,\dot y,\ldots,y^{(r-1)})(t))+\Gamma u(t),
\]
where $\Gamma\in\Gl_m(\R)$,
$\hat{f}\in\cC(\R^{p}\times \R^{\hat q},\R^m)$ and
$\hat{ \mathbf{T}}\in {\mathbb T}_h^{rm,\hat q}$, $\hat q > rm$.
 We impose additional structural assumptions as follows.
First, it is assumed that $\hat{\mathbf T}$ is of the (highly structured) form given by
\[
\hat{\mathbf T}(\zeta_1,\ldots,\zeta_r)=\big(\zeta_1,\ldots,\zeta_r,{\mathbf T}(\zeta_1,\ldots,\zeta_r)\big)
\]
where ${\mathbf T}\in{\mathbb T}_h^{rm,q}$ ($q=\hat q -rm$) satisfies
\\[1ex]
{\bf (TP3')}\quad for all $c_1>0$ there exists $c_2>0$ such that for all\\
\hspace*{9ex} $\zeta_1,\ldots,\zeta_r \in \cC([-h,\infty),\R^{m}):$
      \[
        \sup_{t\in[-h,\infty)} \|\zeta_1(t)\| \le c_1\ \ \Longrightarrow\ \ \sup_{t\in[0,\infty)} \|\mathbf{T}(\zeta_1,\ldots,\zeta_r)(t)\| \le c_2.
      \]
Secondly, the function $\hat f\in {\mathcal C}(\R^{p}\times\R^{mr}\times\R^q,\R^m)$ is assumed to take the form
\[
\hat f  (d,\zeta,\eta)=\hat f(d,\zeta_1,\ldots,\zeta_r,\eta)=\sum_{i=1}^r  R_i\zeta_i+f(d,\eta),
\]
where $f\in\cC (\R^p\times\R^q,\R^m)$ and $R_i\in\R^{m\times m},\ i=1,\ldots,r.$ Thirdly, $\Gamma$ is assumed to be sign definite:  $|\langle v,\Gamma v\rangle|>0$ for all $v\neq 0$.
In summary, with $r\geq 2$, the generic system to be investigated is
\begin{multline}\label{eq:Sysmeth}
 y^{(r)}(t) = \sum_{i=1}^{r} R_i y^{(i-1)}(t)\\ + f\big(d(t), \mathbf{T}(y,\dot y,\ldots,y^{(r-1)})(t)\big) + \Gamma u(t),
 \end{multline}
 with initial data
 \begin{equation}\label{eq:Sysmethic}
 \left.
  \begin{array}{ll}
   \left.y\right|_{[-h,0]} = y^0\in \cC^{r-1}([-h,0],\R^m), &\!\text{if}~h >0,
   \\[1ex]
   (y(0),\dot y(0),\ldots,y^{r-1}(0))=(y_1^0,y_2^0,\ldots,y_{r-1}^0), &\!\text{if}~h=0,\end{array}\!\!\right\}
   \end{equation}
   where $\Gamma\in\Gl_m(\R)$ is sign definite, $R_1,\ldots,R_r\in\R^{m\times m}$, $f\in\cC(\R^p\times \R^q,\R^m)$, $\mathbf{T}\in {\mathbb T}_h^{rm,q}$ such that~{(TP3')} holds, and
   the disturbance $d$ is essentially bounded.

   \begin{rem}
   The assumption that the generic system is affine in the control can be weakened.  Assume instead that the input enters via a function $g\in\cC(\R^m,\R^m)$ and posit the existence of
   a sign-definite $\Gamma\in\Gl_m(\R)$ such that $v\mapsto g(v)-\Gamma v$ is bounded (which, for example, permits dead zone effects), then, for any  input $u(\cdot)$
   (of class $\cL^\infty_{\loc}$), the function
   $d_u\colon t\mapsto g(u(t))-\Gamma u(t)$ is essentially bounded and so the system with input operator $g$ is subsumed by the form ~\eqref{eq:Sysmeth} on
   replacing $f$ by the $\cC(\R^{p+m} \times \R^q,\R^m)$-function $\big((d_1,d_2),\eta\big)\mapsto f(d_1,\eta)+d_2$.
   \end{rem}

\subsection{Differential-algebraic systems}
\noindent
In the last decades the interest in control design for systems described by differential-algebraic equations (DAEs) steadily increased. In the simplest case, those equations are combinations of differential equations with algebraic constraints, restricting the dynamics to certain subspaces or submanifolds of the state space. However, in general the constraints are not obvious and may also impose restrictions on the possible choices of input functions or, at the other extreme, completely free variables are possible which may occur in the output. Therefore, a thorough analysis of~DAEs is necessary and we refer to the textbooks~\cite{BrenCamp89,KunkMehr06,LamoMarz13}, to name but a few.

\subsubsection{Linear differential-algebraic systems}\label{Sssec:DAEs-lin}
%
\noindent
Here, we focus on linear differential-algebraic system given by the equations
\begin{equation}\label{eq:EABC}
\begin{aligned}
  \ddt E x(t) &= A x(t) + B u(t),\\
  y(t) &= C x(t),
\end{aligned}
\end{equation}
where $E,A\in\R^{{n}\times n}, B\in\R^{{n}\times m}, C\in\R^{{m}\times n}$; we write $[E,A,B,C]\in\Sigma_{n,m}$.
We allow for singular~$E$. In the extreme case of~$E=0$,
\eqref{eq:EABC} consists only of algebraic equations.

Solutions~-- we define in due course what a solution is~--
 exhibit quite different features compared to linear ODE systems~\eqref{eq:ABC}. Consider the linear~DAE system {(in $\Sigma_{2,1}$)}
\begin{equation}\label{eq:DAE-ex-rd}
\begin{aligned}
   \frac{\dd}{\dd t} \begin{bmatrix} 0 & 0\\ 1 & 0\end{bmatrix} \begin{pmatrix} x_1(t) \\ x_2(t) \end{pmatrix} &= \begin{bmatrix} 1 & 0\\ 0 & 1\end{bmatrix} \begin{pmatrix} x_1(t) \\ x_2(t) \end{pmatrix} + \begin{bmatrix} -1 \\ 0\end{bmatrix} u(t),\\
    y(t) &= \begin{bmatrix} 0 & 1\end{bmatrix}  \begin{pmatrix} x_1(t) \\ x_2(t) \end{pmatrix},
\end{aligned}
\end{equation}
which can be reformulated as $y(t) = \dot u(t)$. Therefore,
this system does not have a free input,
the latter must be differentiable;
the state is not determined by~$u$ but
the derivative of~$u$ determines~$x_2$.

Moreover, it is necessary to {re-visit} the concept of relative degree given in Definition~\ref{Def:rel-deg}{: }for system~\eqref{eq:DAE-ex-rd}, a relative degree in the sense of Definition~\ref{Def:rel-deg} does not exist. First, we may observe that it is possible to extend the definition of a transfer functions to~DAE systems~\eqref{eq:EABC}, where the  so-called {\it matrix pencil} $sE-A\in\R[s]^{n\times n}$ is {\it regular}, i.e., $\det (sE-A) \in\R[s]\setminus\{0\}$. In this case, $sE-A$ is invertible over the quotient field $\R(s)$ and we may define the transfer function by
\[
    G(s) := C (sE-A)^{-1} B \in\R(s)^{{m}\times m}.
\]
For single-input, single-output systems (as discussed in Section~\ref{Ssec:rel-degree}), the relative degree equals the difference between the degrees of the denominator and numerator polynomials in the transfer function $G(s) = p(s)/q(s)$. For system~\eqref{eq:DAE-ex-rd}, the transfer function can be computed as
\[
    G(s) = \begin{bmatrix} 0 & 1\end{bmatrix} \begin{bmatrix} -1 & 0\\ s & -1\end{bmatrix} \begin{bmatrix} -1 \\ 0\end{bmatrix} = s,
\]
thus $p(s) = s$ and $q(s) = 1$ which yields a relative degree $r = \deg q(s) - \deg p(s) = -1$. In fact, for differential-algebraic systems a negative relative degree is quite common, which means that the underlying system contains a chain of differentiators (instead of integrators as for ordinary differential equations with positive relative degree). For general differential-algebraic systems, it is possible to extend the notion of relative degree to $r\in\Z$. Then again, this enables us to derive a decomposition of the system which exposes the underlying chains of integrators and differentiators as well as the zero dynamics; this generalizes the Byrnes-Isidori form, see Remark~\ref{Rem:ext-BIF} below.

In the current {\em linear}  context, the appropriate solution concept for differential-algebraic equations
is that of the \textit{behavioural approach},  introduced by Jan~C Willems~\cite{Will81} (see also \cite{PoldWill98,Will07}), wherein the {\em behaviour} of $[E,A,B,C]{\in \Sigma_{n,m}}$ is defined as
\begin{align*}
    &\fB_{[E,A,B,C]}\\
     &:=\! \setd{\!\!\!\!\begin{array}{l} (x,u,y)\\ \in\!\cL^1_{\loc}(\R_{\ge 0},\R^n\!\times\!\R^m\!\times\! \R^{m})\end{array}\!\!\!}{\!\!\!\begin{array}{l} Ex\in\cA\cC_{\loc}(\R_{\ge 0},\R^{n}),\\
     \text{\eqref{eq:EABC} holds for}\\ \text{a.a.\ $t\!\ge\! 0$}\end{array}\!\!\!\!}\!.
\end{align*}
The zero dynamics
$\mathcal{ZD}(E,A,B,C)$ of~\eqref{eq:EABC} are defined, similar to linear ODE systems, as those {elements $(x,u,y)$ of $\fB_{[E,A,B,C]}$
for which the output $y$ is (almost everywhere) zero:
\begin{align*}
&\mathcal{ZD}(E,A,B,C)\\
&:= \setdef{\!(x,u)\!\in\!\cL^1_{\loc}(\R_{\ge 0},\R^n\!\times\!\R^m )\!}{\!(x,u,0)\!\in\! \fB_{[E,A,B,C]}\!}.
\end{align*}
Analogous to the ODE case, the zero dynamics are said to be {\it bounded}, if
$(x,u)\in\cL^\infty(\R_{\ge 0},\R^n\times\R^m)$ for all $(x,u)\in {\mathcal {ZD}}(E,A,B,C)$  and are said to be {\it asymptotically stable}, if
$\text{ess\,sup}_{\tau\geq t}\|(x(\tau),u(\tau))\| \to 0$ as $t\to\infty$ for all $(x,u)\in {\mathcal {ZD}}(E,A,B,C)$.}
It is shown in~\cite[Lem.~3.11]{Berg16b} that the zero dynamics  ${\mathcal {ZD}}(E,A,B,C)$ are asymptotically stable if, and only if,
\[
    \forall\, \lambda\in{\mathbb{C}}_{\ge0} :\
   {\det \begin{bmatrix}A- \lambda E  & B\\ C & 0\end{bmatrix}\neq 0.}
\]
Another crucial system property, in particular for control purposes, is that every
 smooth function $\R_{\ge 0}\to\R^{m}$ can be generated as the output of the system for some appropriate input. This leads to the notion of right invertibility {(which has been introduced and analyzed for ODE systems e.g.\ in~\cite{Resp90,SannSabe87}, see also the textbook~\cite[Sec.~8.2]{TrenStoo01})}; we call $[E,A,B,C]\in{\Sigma_{n,m}}$ \textit{right invertible}, if
\begin{multline*}
    \forall\, y\in\cC^\infty(\R_{\ge 0},\R^{m})\ \exists\, (x,u)\in\cL^1_{\loc}(\R,\R^n\times \R^m):\\ (x,u,y)\in\fB_{[E,A,B,C]}.
\end{multline*}
For a right-invertible system $[E,A,B,C]\in\Sigma_{n,m}$ with asymptotically stable zero dynamics,
a distillation of results from \cite{Berg14a} (in particular, Lemma 4.2.5, Theorem 4.2.7, Proposition 4.2.12 \& Remark 4.3.10 therein; see also
\cite[Section 3]{Berg16b}) establishes that~\eqref{eq:EABC} is equivalent to
\begin{equation}\label{eq:ZDF-EABC}
\left.\begin{aligned}
x_2(t) &= \sum_{k=0}^{\nu-1} N^k E_{11}  x_1^{(k+1)}(t),\\
0 &= A_{21}x_1(t) - E_{21} {\dot x_1}(t) - E_{22}\dot x_2(t)\ \\
&\quad  +A_{23} {x_3}(t)+ u(t),\ \ \\
  \dot x_3(t) &= Qx_3(t) + A_{31} {x_1}(t),\\
  y(t) &= x_1(t),
  \end{aligned}\right\}
\end{equation}
where {$x_1(t)\in\R^{m}$, $x_2(t)\in\R^{n_2}$, $x_3(t)\in\R^{n_3}$ {with $n_2=\nu m$ and $n_3=n-(\nu+1)m$,
$N\in\R^{n_2\times n_2}$ is nilpotent with
index of nilpotency $\nu$, and all other matrices are of conforming size.  Moreover, $Q\in\R^{n_3\times n_3}$ is Hurwitz, that is, $\sigma(Q)\subset{\mathbb{C}}_{<0}$.
\begin{rem}\label{Rem:ext-BIF}
  The form~\eqref{eq:ZDF-EABC} is a generalization of the Byrnes-Isidori form~\eqref{eq:ABC_BIform} of linear systems $(A,B,C)$.
  More precisely, assume that $E$ in~\eqref{eq:EABC} is invertible (without loss of generality, we may assume that $E=I$) and so, in
   Byrnes-Isidori form, the system {(of relative degree $r$)} is given by \eqref{eq:ABC_BIform}.  Setting {$n_2=(r-1)m$} and writing
   \[
  N =\begin{bmatrix}0&0&\cdots&0&0
   \\
   I_m&0&\cdots&0 &0
   \\
   0& I_m&\cdots&0&0\\\vdots&\vdots&\ddots&\vdots&\vdots
   \\
   0&0&\cdots&I_m&0\end{bmatrix},
  \quad E_{11}=\begin{bmatrix} I_m\\0\\0\\\vdots\\0\end{bmatrix},
   \]
   (each being vacuous if $r=1$ and, for $r >1$, $N$ is nilpotent with index $\nu=r-1$), we have
   \[
  \sum_{k=0}^{\nu-1} N^k E_{11} { x_1}^{(k+1)}(t)=
  \begin{smallpmatrix} \dot y(t)\\\vdots\\y^{(r-1)}(t)\end{smallpmatrix}
  \]
  and so the first of relations \eqref{eq:ZDF-EABC} is simply a re-affirmation of the first set of $r-1$  equations in~\eqref{eq:ABC_BIform}.
  The second {set}  of equations \eqref{eq:ABC_BIform} can be re-written as
  \begin{multline*}
  0=\Gamma^{-1}\bigg(R_1y(t)+R_2\dot y(t)+\sum_{k=3}^r R_ky^{(k-1)}(t) \\
  -y^{(r)}(t)+S\eta(t)\bigg)+u(t)
  \end{multline*}
  which, on setting $A_{21}=\Gamma^{-1}R_1$, $E_{21}=-\Gamma^{-1}R_2$, $E_{22}=-\Gamma^{-1}
  [R_3, \ldots, R_r, -I_m]$ and $A_{23} =\Gamma^{-1}S$, coincides with the second of equations \eqref{eq:ZDF-EABC}.
  Finally, on setting $A_{31}=P$, the third of equations \eqref{eq:ABC_BIform} and \eqref{eq:ZDF-EABC} coincide.
  In summary, we have shown that, in the case of invertible~$E$, the form~\eqref{eq:ZDF-EABC} of system~\eqref{eq:EABC}
  {is equivalent to} its Byrnes-Isidori form~\eqref{eq:ABC_BIform}.
\end{rem}

Returning to the general case of {right-invertible} systems $[E,A,B,C]\in\Sigma_{{n,m}}$ {with} asymptotically stable zero dynamics,
and adopting the ``Byrnes-Isidori'' form~\eqref{eq:ZDF-EABC},  we see that, by nilpotency of $N$,  $(sN-I_{n_2})^{-1} = -\sum_{k=0}^{\nu-1} N^k s^{k}$.
Define
\begin{multline*}
-A_{21}+sE_{21} +\sum_{k=0}^{\nu-1} E_{22}N^k E_{11}s^{k+2}\\
-A_{23} (sI_{n_3}-Q)^{-1} A_{31} =: H(s)\in\R(s)^{m\times m}
\end{multline*}
and observe that (cf.\ also~\cite[Rem.~A.4]{Berg16b}), if $sE-A$ is regular with invertible transfer function $G(s) = C(sE-A)^{-1} B$, then $G(s)^{-1}=H(s)$.
We define the degree of a
vector of rational functions $h(s)=\big(p_1(s)/q_1(s) ,\ldots,p_m(s)/q_m(s)\big)^\top \in \R(s)^{m}$ by
\[
\deg h(s):= \max_{i=1,\ldots,m}\big(\deg p_i(s)-\deg q_i(s)\big).
\]
Let $h_i(s)$, $i=1,\ldots,m$, denote the columns of $H(s)$ and write
$ r_i := \max\{\deg h_i(s),0\}$, $i=1,\dots,m$: right-multiplication of $H(s)$ by a permutation matrix  $P\in\R^{m\times m}$
(corresponding to a re-ordering of the components of the system output, if necessary) ensures that,  without loss of generality,
we may assume the ordering $r_1\geq \cdots\geq r_m~(\geq 0)$.
Observe that the following are well defined: $\lim_{s\to\infty}s^{-r_i}h_i(s)=: \hat h_i\in\R^m$, $i=1,\ldots,m$.  Write
\begin{align}\notag
\Gamma_H&:= \lim_{s\to\infty} H(s)\text{diag}\big(s^{-r_1}, \ldots, s^{-r_m}\big)\\
&=\begin{bmatrix} \hat h_1, \ldots, \hat h_m\end{bmatrix}\in\R^{m\times m}\label{eq:Gamma}
\end{align}
Let $\ell \in \{1,\ldots,m\}$
be such that,
for all $i\in \{1,\ldots,m\}$,  $r_i=0$ implies $i> \ell $.  Define
\begin{equation}\label{eq:Gamma-l}
\Gamma_\ell  := \begin{bmatrix} \hat h_1 , \ldots, \hat h_\ell \end{bmatrix}\in\R^{m\times \ell }.
 \end{equation}
As introduced in~\cite{BergLe20}, the $m$-tuple~$(r_1,\ldots,r_m)$ is said to be the \textit{truncated vector relative degree} of~$[E,A,B,C]$,
 if~$\rk\Gamma_\ell  =\ell $.
\begin{rem}
At first glance, it might seem more natural to call the
$m$-tuple $(r_1,\ldots,r_m)$ the vector relative degree and to call the
$\ell $-tuple $(r_1,\ldots,r_\ell )$ the truncated vector relative degree.  However, a concept of ``vector relative degree" already exists for DAE systems (see Def.~2.7 in~\cite{BergLe20}) which differs from  $(r_1,\ldots,r_m)$ insofar as it may also contain negative entries: the terminology
``truncated" refers to the extant notion of vector relative degree with its negative terms excised.
\end{rem}

Although the situation of arbitrary truncated vector relative degree is extensively explored in~\cite{BergLe20},
for purposes of {exposition} we restrict ourselves {here} to the case of truncated \textit{strict} relative degree, that is,
we assume that there exists $r\in\N$ such that $r_1 = \ldots = r_\ell  = r$ and $r_{\ell +1} = \ldots = r_m = 0$; this relative degree is
denoted by the pair $(r,\ell )$.
Observe that, if $\rk \Gamma_\ell =\ell $, then (invoking a suitable re-ordering of the components of the system input if necessary),   we may assume,
without loss of generality, that $\Gamma_\ell $ takes the form
\begin{equation}\label{eq:Gamma2}
\Gamma_\ell  = \begin{bmatrix} \hat \Gamma\\ \tilde \Gamma \end{bmatrix}\quad\text{with}
\quad \hat \Gamma\in\Gl_\ell (\R).
 \end{equation}
\begin{rem}
The concept of truncated strict relative degree generalizes the concept of relative degree for linear systems $(A,B,C)$ introduced in Definition~\ref{Def:rel-deg}.
To see this, {let $E=I$ in \eqref{eq:EABC} and assume that $(A,B,C)$ has relative degree $r\in\N$, i.e., (SA1) holds.
Then $\Gamma = C A^{r-1} B \in \R^{m\times m}$ is invertible and for $F(s):= s^r G(s)\in\R(s)^{m\times m}$ we have that $F(s) = \Gamma + \tilde G(s)$,
where $\tilde G(s)$ is strictly proper, i.e., $\lim_{s\to\infty} \tilde G(s) = 0$
and so the degree of each of its elements is not greater than $-1$: $\deg \tilde G(s)_{ij} \le -1$, $i,j=1,\ldots,m$.  We show that $F(s)$ is invertible over $\R(s)$. Let {$\rho (s)=(\rho_1(s),\ldots,\rho_m(s))^\top \in\R(s)^m$} be such that $F(s) {\rho} (s) = 0$.
Let $J:=\setdef{j\in\{1,\ldots,m\}}{\rho_j\neq 0}$ and so $\rho_j(s)=p_j(s)/q_j(s)$, $p_j(s)\neq 0$, for all $j\in J$.
Seeking a contradiction, suppose that $J\neq\emptyset$.
Since ${\rho} (s) = -\Gamma^{-1} \tilde G(s) {\rho} (s)$ and $\deg \tilde G(s)_{ij}\leq -1$,  we have
\[
\deg{\rho} _i(s) \!=\! \deg \sum_{j\in J} \big( -\Gamma^{-1} \tilde G(s)\big)_{ij} {\rho}_j(s) \!\le\! -1+ \max_{j\in J} \deg {\rho}_j(s)
\]
for all $i\in J$, and so, for some $j\in J$,  we arrive at the contradiction
\[
\deg p_j(s)-\deg q_j(s) \leq -1 +\deg p_j(s)-\deg q_j(s).
\]
Therefore, $\rho(s)=0$ and so $F(s)^{-1}\in\R(s)^{m\times m}$. It follows that $G(s)$ is invertible and so, recalling~\eqref{BIformG},
\begin{align*}
    H(s) &= G(s)^{-1}\\
    &=-\Gamma^{-1}\left( \sum_{i=1}^r R_i s^{i-1} - s^rI+ S(sI-Q)^{-1} P\right).
\end{align*}
Clearly, each column $h_i(s) = H(s) e_i$ has degree $\deg h_i(s) = r$ for $i=1,\ldots,m$ and so $q=m$.  Moreover,  $\Gamma_\ell $ is invertible:
\[
    \Gamma_\ell  = \lim_{s\to\infty} s^{-r} H(s) = \Gamma^{-1}.
\]
Therefore, $[I,A,B,C]$ has truncated strict relative degree~$(r,m)$.
}
\end{rem}

Returning to the general context of differential-algebraic systems of form \eqref{eq:EABC}, we posit
the following {structural} assumptions:
\begin{enumerate}[\hspace{2pt}\textbf{(TP1)}]
\item[\textbf{(DA1)}]
  $[E,A,B,C]$ is right-invertible and has asymptotically stable zero dynamics,
\item[\textbf{(DA2)}]
 $[E,A,B,C]$ has a truncated strict relative degree $(r,\ell )$ which is known to the controller,
 \item[\textbf{(DA3)}]
 ${\hat\Gamma}$ is sign-definite.
\end{enumerate}
We now introduce a class of~DAEs, which will be shown to be amenable to funnel control,
\[
    \mathcal{LD}^{m,r,\ell }
     \!:=\! \setdef{\![E,A,B,C]\in\Sigma_{n,m}\!\!}{\!\!\!\begin{array}{l}{n\in\N},\ \text{(DA1),\,(DA2),}\\ \text{and (DA3) hold}\end{array}\!\!\!\!\!}\!.
\]

\begin{rem}
If $[I_n,A,B,C]\in\Sigma_{n,m}$, then it is readily verified that
Assumptions (DA1), (DA2), (DA3) all hold if, and only if, Assumptions (SA1), (SA2), (SA3) all hold.  Therefore,
\[
    \setdef{(A,B,C)}{[I_n,A,B,C]\in \mathcal{LD}^{m,r,\ell }} = \mathcal{L}^{m,r},
\]
where the latter is defined in~\eqref{eq:Lmr}.
\end{rem}

As shown in~\cite[Sec.~2.3]{BergLe20}, a system $[E,A,B,C]\in  \mathcal{LD}^{m,r,\ell }$ is equivalent to
\begin{equation}\label{eq:NF-EABC}
\begin{aligned}
 y_I^{(r)}(t) &= \sum_{k=1}^r R_{k,1} y_I^{(k-1)}(t) + P_1 y_{II}(t) + S_1 x_{3}(t)
 \\
 &\quad + \hat \Gamma u_I(t),\\
  0 &= \sum_{k=1}^r R_{k,2} y_I^{(k-1)}(t) + P_2 y_{II}(t) + S_2 x_{3}(t)  \\
  &\quad + \tilde \Gamma u_I(t) + u_{II}(t),
  \\
  \dot x_3(t) &= Qx_3(t) + A_{31} y(t),
\end{aligned}
\end{equation}
where\\
 $y_I = (y_1,\ldots,y_\ell )\in\R^\ell $,\quad
 $y_{II} = (y_{\ell +1},\ldots,y_m)\in\R^{m-\ell }$,\\
  $u_I = (u_1,\ldots,u_\ell )\in\R^\ell $,\quad
$u_{II} = (u_{\ell +1},\ldots,u_m)\in\R^{m-\ell }$.

\subsubsection{Nonlinear differential-algebraic systems}\label{Sssec:DAEs-nonl}
\noindent
Similar to the extension of the Byrnes-Isidori form~\eqref{eq:ABC_BIform} to the nonlinear functional differential systems~\eqref{eq:nonlSys}, the representation~\eqref{eq:NF-EABC} of linear~DAE systems can be extended to incorporate a class of nonlinear~DAE systems, cf.~\cite{BergIlch14,BergLe20}. For motivation, consider $[E,A,B,C]\in  \mathcal{LD}^{m,r,q}$
and assume that its representation is in form~\eqref{eq:NF-EABC}.  Analogous to \eqref{eq:QPS}, we  introduce the  linear operator
\begin{align*}
L\colon &\cC (\R_{\ge 0},\R^m)  \to  \cC(\R_{\ge0},\R^{m}),\\
&y \mapsto \left( t \mapsto   \int_0^t  {\rm e}^{Q(t-\tau)} A_{31}y(\tau)\, \dd \tau \right).
\end{align*}
Define operators
\begin{align*}
\fT_1\colon &\cC (\R_{\ge 0}, \R^{\ell }\times\cdots\times\R^\ell \times\R^{m-\ell })\to\cC(\R_{\ge 0},\R^m),
\\
&(\zeta_1,\ldots,\zeta_{r},\theta)\\
& \mapsto\left(t\mapsto \sum_{k=1}^r R_{k,1} \zeta_k(t) +S_1L(\zeta_1,\theta)(t)+P_1\theta (t)\right),\\
\fT_2\colon &\cC (\R_{\ge 0},\R^m)\to \cC (\R_{\ge 0},\R^{m-\ell }),~~ y\mapsto \big(t\mapsto S_2L(y)(t)\big)
\end{align*}
and set $d(\cdot):=e^{Q\cdot} x_3(0)$,  $d_1(\cdot ):=S_1d(\cdot)$ and $d_2(\cdot):=S_2 d(\cdot)$.
We may now identify \eqref{eq:NF-EABC} with the functional differential-algebraic system
\begin{equation}\label{eq:linDAE-op}
\begin{aligned}
  y_I^{(r)}(t) &= d_1(t) + \fT_1\big(y_I,\ldots,y_I^{(r-1)},y_{II}\big)(t) + \hat \Gamma u_I(t),\\
  0 &= \sum_{k=1}^r R_{k,2} y_I^{(k-1)}(t) \!+\! P_2y_{II}(t) \!+\! d_2(t) \!+\! \fT_2(y_I,y_{II})(t)\\
  &\quad+ \tilde \Gamma u_I(t) + u_{II}(t).
\end{aligned}
\end{equation}
Next, we extend this prototype to encompass nonlinear functional differential-algebraic equations (with memory quantified by $h \ge 0$) of the form
\begin{equation}\label{eq:DAE}
\begin{aligned}
 y_I^{(r)}(t) &= f_1\left(d_1(t), \fT_1(y_I,\ldots,y_I^{(r-1)},y_{II})(t), u_I(t)\right),\\
  0 &= f_2\big(y_I(t),\ldots,y_I^{(r-1)}(t)\big)+f_3\big(y_{II}(t)\big) \\
  &\quad + f_4\big(d_2(t), \fT_2(y_I,y_{II})(t)\big)\\
  &\quad +f_5(t) u_I(t)+f_6(t)u_{II}(t)
\end{aligned}
\end{equation}
with initial data
\begin{equation}\label{eq:nonlDAE-Ic}
\hspace*{-2ex}
\left.\begin{array}{lll}
y_I|_{[-h,0]}= y_I^0\in \cC^{r-1}([-h,0], \R^\ell ),
 \\
 y_{II}|_{[-h,0]}= y_{II}^0\in \cC([-h,0], \R^{m-\ell }),  & \text{if~$h >0$,}
\\[1ex]
\big(y_I(0),\cdots, y_I^{(r-1)}(0),y_{II}(0)\big)
\\
=\big(y^0_{I,1},\cdots,y^0_{I,r},y^0_{II}\big)\in\R^{m+(r-1)\ell},
&\text{if~$h=0$.}
\end{array}
\right\}
\end{equation}
We proceed to make precise the admissible operators and functions in the above extended formulation.  We first define a subclass of the
operator class of Definition \ref{Def:op-class}.
\begin{defn}[Operator class ${\mathbb T}^{n,q}_{h, \text{DAE}}$]\label{Def:operators}
For $h\ge 0$, $n,q\in\N$, the set ${\mathbb T}^{n,q}_{h, \text{DAE}}$ denotes the subclass of operators $\mathbf{\fT} :\cC([-h,\infty),\R^n)\to \cC^1(\R_{\ge 0},\R^q)$ such that $\fT\in {\mathbb T}^{n,q}_{h}$ and, in addition, there exist $g\in\cC(\R^n\times\R^q,\R^q)$ and $\tilde \fT\in {\mathbb T}^{n,q}_{h}$ such that
\[
    \forall\, \zeta\in\cC([-h,\infty),\R^n)\ \forall\,t\ge 0:\ \ddt (\fT\zeta)(t) = g\big(\zeta(t),\tilde\fT(\zeta)(t)\big).
\]
\end{defn}

We note that the additional assumption of the class~${\mathbb T}^{n,q}_{h, \text{DAE}}$ formulated above essentially requires that~$\mathbf{T} $ is the solution operator of a {functional} differential equation with input $\zeta$.

\begin{rem}
Recall that the operator $\fT_2$ in~\eqref{eq:linDAE-op} takes the form {$\fT_2\colon y\mapsto S_2 L(y)$.}
If $\sigma(Q)\subseteq{\mathbb{C}}_-$, then it is easy to see that $\fT_2\in {\mathbb T}^{m,q}_{0}$. Furthermore,
\[
    \ddt \mathbf{T}_2(y)(t) = S_2 A_{31} y(t)  + \tilde\fT (y)(t),
\]
where $\tilde\fT\colon y\mapsto S_2QL(y)$, and so $\fT_2\in{\mathbb T}^{m,m-q}_{0, \text{DAE}}$.
\end{rem}

Now, for $m,r\in\N$ and $\ell\in\{0,\ldots,m\}$, the representative nonlinear~DAE system~\eqref{eq:DAE} may be identified with the tuple
$(d_1,d_2,f_1,\ldots,f_6,\fT_1,\fT_2)$ on which we impose the following assumptions:  for some $\beta >0$ and $\ell,p\in\N$,
\begin{equation}\label{tuple}
\!\!\left.\begin{array}{l}
 d_1,d_2\in\cL^\infty(\R_{\ge 0},\R^{p}),~f_1\in\mathbf{N}^{p,q,\ell},\\
 f_2\in\cC^1(\R^{r\ell},\R^{m-\ell}), f_3\in\cC^1(\R^{m-\ell},\R^{m-\ell}),\\
 f_4\in\cC^1(\R^{p+q},\R^{m-\ell}), f_5\in(\cC^1\cap \cL^\infty)(\R_{\ge 0},\R^{(m-\ell)\times \ell}),\\
 f_6\in{(\cC^1\cap\cL^\infty)(\R_{\ge 0},\R)},\\
 \forall\,t\ge 0:\ |f_6(t)|\ge \beta,~\fT_1\in {\mathbb T}^{(r-1)\ell+m,q}_{h},\  \fT_2
 \in{\mathbb T}^{m,q}_{h, \text{DAE}}
 \end{array}\!\!\right\}
 \end{equation}
 where $\mathbf{N}^{p,q,\ell}$ is as in Definition~\ref{adnon}.  Thus, we are led to consideration of the following nonlinear functional differential-algebraic system class,
 parametrized by $m,r\in\N$ and $\ell\in\{0,\ldots,m\}$:
 \[
 \mathcal{ND}^{m,r,\ell}:=\setdef{\!\!\!\begin{array}{l} (d_1,d_2,f_1,\\
 \ldots,f_6,\fT_1,\fT_2)\end{array}\!\!\!}{\!\!\!\begin{array}{l}\eqref{tuple}~\text{holds for some},\\
 h\ge 0, \beta >0,\,q,p\in\N\end{array}\!\!\!\!}.
 \]
Recalling the equivalent representations \eqref{eq:NF-EABC} and \eqref{eq:linDAE-op} of any linear system $[E,A,B,C]\in
\mathcal{LD}^{m,r,\ell}$, we have the inclusion
$\mathcal{LD}^{m,r,q} \subset \mathcal{ND}^{m,r,\ell}$.
We also remark that, if~$\ell=m$, then~$y_{II}$ and the second of relations~\eqref{eq:DAE} are vacuous, in which case~\eqref{eq:nonlSys}
and~\eqref{eq:DAE}
are equivalent and so
$\mathcal{N}^{m,r}\equiv \mathcal{ND}^{m,r,m}$.

\subsection{Systems described by partial differential equations}\label{Sec:inf-dim}
\noindent
Early intimations on funnel control for infinite-dimensional systems modelled by partial differential equations (PDEs) may be found in \textit{Ilchmann, Ryan, and Sangwin} (2002)~\cite{IlchRyan02b}. However, in a general infinite-dimensional context, many open questions and challenges remain.
We briefly describe some recent findings in the following three subsections, which we preface with some basic facts pertaining to linear
infinite-dimensional systems in the abstract form
\begin{equation}\label{eq:PDE-wp}
  \dot z(t) = A z(t) + B \zeta(t),\ \ z(0)=z^0\in \cD (A),\ \
  \eta(t) = C z(t),
\end{equation}
where  $A$ is the generator of a strongly continuous semigroup of bounded linear operators on a real Hilbert space~$H$.  In what follows, for brevity,
technicalities are suppressed:  the reader is referred to the succinctly-written treatise~\cite{TucsWeis09} for full details; the survey article~\cite{TucsWeis14} is likewise
recommended.
Recall that a {\it semigroup}  $(T(t))_{t\ge0}$ on $H$ is a parameterized family of operators in  $\mathfrak{L}(H,H)$ satisfying $T(0)=I$ and $T(t+s)=T(t)T(s)$,
for all $s,t\geq0$, where $I$ denotes the identity operator. The semigroup is said to be strongly continuous if, for all $z\in H$,  $\|T(t)z-z\|\to 0$ as $t\to 0$.
The {\it growth bound} of the semigroup is
\[
\omega_T:= \inf \setdef{\omega\in\R}{\sup_{t\ge 0} \|e^{-\omega t} T(t)\| < \infty}
\]
and, for any $\omega >\omega_T$,  there exists a constant $c_\omega$ such that
\[
 \forall\, t\ge0:\ \|T(t)\|\leq c_\omega e^{\omega t}.
\]
The semigroup is {\it exponentially stable}, if $\omega_T<0$.

We assume that the (densely defined) operator $A$ has non-empty resolvent set $\varrho (A)$.   Introduce the (Hilbert) spaces $H_1$ and $H_{-1}$, where
$H_1=\cD (A)$ equipped with the graph norm and $H_{-1}$ is the completion of $H$ with respect to the norm given by
$\|z\|_{{-1}}=\|(\beta I-A)^{-1} z\|$, where $\beta$ is any element of $\varrho(A)$.  Then $H_1\subset H\subset H_{-1}$ with dense and continuous injections.
As a map $H_1\to H$, $A$ is bounded, that is, $A\in\fL (H_1,H)$, and has a unique extension $A_{-1}\in\fL (H,H_{-1})$.  Furthermore, the semigroup~$(T(t))_{t\ge 0}$
on~$H$ extends uniquely to a semigroup~$(T_{-1}(t))_{t\ge 0}$ with generator~$A_{-1}$.

We are now in a position to formulate assumptions on the
triple $(A,B,C)$, specifically tailored to our context of funnel control. First, we assume that~$\zeta$ and~$\eta$ are, respectively,
$\R^{\ell}$-valued and $\R^q$-valued functions.
Secondly, we assume that $(A,B,C)$ is a {\it regular well-posed} system, that is:
\begin{enumerate}[(i)]
  \item $A$ is the generator of a strongly continuous semigroup $(T(t))_{t\ge 0}$.
  \item $B$ is an {\it admissible control operator} (in the terminology coined by \textit{Curtain and Weiss}~\cite{CurtWeis89}); that is,
 $B\in\fL (\R^{\ell},H_{-1})$ and
\begin{multline*}
\Phi_{t}\colon~\zeta\mapsto \int_{0}^{t}{T}_{-1}(t-\tau)\, B\zeta (\tau)\, \text{d} \tau\\
\text{is in}~ {\mathfrak L}\big(\cL^2([0,t],\R^{\ell}),H\big)~\text{for all}~t\ge 0.
\end{multline*}
\item $C$ is an {\it admissible observation operator}; that is, $C\in\fL(H_1,\R^q)$ and, for all $t\ge 0$,
\[
   \Psi_t\colon~z \mapsto C T(\cdot)z \quad\text{is in}~ {\mathfrak L}\big((H_1,\|\cdot\|_H),\cL^2([0,t],\R^q)\big).
\]
\item
For some $\omega\in\R$,  there
exists an analytic function $\mathbf{G}:{\mathbb{C}}_{>\omega}\to \R^{q\times \ell}$ (referred to as a {\it transfer function}) which satisfies
\begin{equation}\label{eq:transfer1}
\forall\, s\in{\mathbb{C}}_{>\omega}:\ \mathbf{G}^\prime(s) = -C(sI-A)^{-2}B
\end{equation}
and $\lim_{\Re s\to\infty}\mathbf{G}(s)$ exists.
\end{enumerate}
The subtlety of assumption~(ii) is that $\Phi_t$ generates a $H$-valued function, even though the function $B\zeta(\cdot)$ takes its values in an the larger space $H_{-1}$; loosely speaking, the ``smoothing'' effect of the semigroup saves the day.
For $\zeta\in \cL^2_{\text{\rm loc}}(\R_{\ge 0},\R^\ell)$, the {\it mild solution} of the initialised differential
equation in~\eqref{eq:PDE-wp} is given by
\[
z(t)=T(t)z^0+ \Phi_t(\zeta|_{[0,t]}),\quad t\ge 0.
\]

\subsubsection{Infinite dimensional internal dynamics}\label{Ssec:InfIntDyn}
%
\noindent
Consider again system~\eqref{eq:PDE-wp} and assume that~$(A,B,C)$ is regular well-posed.
With this system, for every~$z_0\in H_1$  we may associate a map
\begin{multline*}
{\mathbf T}\colon\cC (\R_{\ge 0},\R^\ell)\to \cL^\infty_{\mathrm{loc}}(\R_{\ge 0},\R^q),\\
\zeta\mapsto \eta =\left(t\mapsto \big(C T(t) z^0 + C\Phi_t(\zeta|_{[0,t]})\big)\right)
\end{multline*}
for which, as shown in~\cite{BergPuch20a}, properties (TP1) and (TP2) of Definition \ref{Def:op-class} hold.  If, in addition, $(A,B,C)$ is bounded-input bounded-output stable, i.e., the inverse Laplace transform of each of the components of the transfer function $\mathbf{G}$ is a real-valued measure with bounded total variation,
 then property~(TP3) also holds and so ${\mathbf T}\in\mathbb T_0^{\ell,q}$; note that exponential stability of the semigroup $(T(t))_{t\ge 0}$ is sufficient for this property to hold.
  If $f\in {\mathbf N}^{p,q,m}$ (recall Definition~\ref{adnon}), $d\in \cL^\infty(\R_{\ge 0},\R^p)$ and setting $\ell=rm$, we may conclude that $(d,f,\fT)\in \cN^{m,r}$.
Note that the
class of operators~${\mathbf T}$ considered in~\cite{BergPuch20a} is considerably larger and also allows for certain nonlinear output operators associated with the differential equation in~\eqref{eq:PDE-wp}.

The application of funnel control to a particular member of the above described system class was considered in~\cite{BergPuch22}: the control of the horizontal movement of a water tank. The problem is modelled via the linearized Saint-Venant equations and subject to sloshing effects. It is shown that the overall system belongs to the above system class and hence tracking with prescribed transient behaviour can be achieved. We will return to this example in Section~\ref{Ssec:Appl-PDE}.

\subsubsection{Linear infinite-dimensional systems with integer-valued relative degree}\label{Ssec:IST}
\noindent
The following class of single-input, single-output, linear, infinite-dimensional systems $(A,b,c)$, coming from partial differential equations and of the general form~\eqref{eq:PDE-wp},
were considered by
\textit{Ilchmann, Selig, and Trunk} (2016) \cite{IlchSeli16}:
\begin{equation}\label{eq:Abc}
\dot{x}(t) =Ax(t)+ bu(t),\ \ x(0)=x^0\in\cD (A),\ \
 y(t) = \langle x(t),c\rangle \,,
\end{equation}
where
\begin{enumerate}[\hspace{2pt}\textbf{(A1)}]
\item[\textbf{(A1)}] \label{item:A}
$A \colon \cD(A)  \to H $ is  the generator of a strongly-continuous semigroup~$(T(t))_{t\geq0}$  of bounded linear operators on a real Hilbert space~$H$
with inner product $\Skdef$,
\end{enumerate}
and $b,c\in H$ with, for some  $r\in\N$,
\begin{enumerate}[\hspace{2pt}\textbf{(A1)}]
\item[\textbf{(A2)}] \label{item:bc}
 $b \in \cD(A^r)$ and $c \in \cD\big((A^*)^r\big)$,
\item[\textbf{(A3)}] \label{item:reldeg}
      $\langle A^{r-1}b,c\rangle \ne 0 \  \text{and}  \  \langle A^jb,c\rangle=0$ for all $j=0,1,\ldots,r-2$.
\end{enumerate}
For finite-dimensional systems (in which case, $H\simeq\R^n$ for some $n\in\N$) assumptions~{{(A1)}} and~{{(A2)}} are superfluous,
and assumption~{{(A3)}} is the  relative degree~$r$ property from Definition~\ref{Def:rel-deg}.
For infinite-dimensional systems, assumption~{{(A1)}} is ubiquitous in systems theory, see e.g.~\cite{CurtZwar95} and has already been discussed above;
assumption~{{(A2)}} is very restrictive from a practical point of view  (for example, if~$\Omega$ is the spatial domain of an underlying PDE, then control/observation
on the domain boundary and pointwise
control/observation concentrated at points in the interior of $\Omega$ are both excluded).  For $\omega > \omega_T$ (the growth bound of the semigroup),
the function $s\mapsto {\mathbf G}(s):=\langle c,(sI-A)^{-1}b\rangle$ is a
 transfer function on~${\mathbb C}_\omega$
 (recall that it is unique up to a constant).
Assumptions~(A2) and~(A3) imply, by~\cite[Lem.~2.9]{MorrReba07}, that
that the transfer function of the system satisfies
\begin{equation}\label{eq:rel_deg_freq}
\hspace*{-1ex}
\lim_{s\to\infty,\ s\in \R} s^r {\mathbf G}(s)\neq 0
\ \ \text{and}\
\lim_{s\to\infty,\ s\in \R} s^{r-1}{\mathbf G}(s)=0.
\end{equation}
It follows {\it a fortiori} that, under assumptions {\bf{(A1)}}-{\bf{(A3)}}, system~$(A,b,c)$ is regular well-posed.   In \cite{IlchSeli16}, it is shown that the class of such systems
allows for a Byrnes-Isidori form similar to that discussed in Section~\ref{Ssec:BIF} for finite-dimensional systems. The only difference is that the internal dynamics are described by a subsystem of the form~\eqref{eq:QPS}, where~$Q$ is the generator of a strongly continuous semigroup in a Hilbert space~$H_Q$ and $S:H_Q\to \R$, $P:\R\to H_Q$ are bounded linear operators. In particular, systems~\eqref{eq:QPS} with these properties are subclasses of the regular well-posed  infinite-dimensional systems~\eqref{eq:PDE-wp} as discussed above.
Therefore, assuming that $Q$ generates an exponentially stable and strongly continuous semigroup, the comments in Section~\ref{Ssec:InfIntDyn} apply to
conclude that~\eqref{eq:Abc} belongs to the class $\cN^{1,r}$.

In particular, systems~\eqref{eq:Abc} cover the heat equation with Neumann boundary conditions
modelled by
\begin{equation}\label{eq:heat_eq}
\hspace*{-2ex}
\begin{array}{l}
\partial_t x(\xi,t)=\partial_\xi^2 x(\xi,t)+u(t),~~(\xi,t)\in[0,1]\times\R_{>0},\\
x(\xi,0) = x^0(\xi),~~\xi\in [0,1],\\
\partial_{\xi} x(0,t) =0= \partial_{\xi} x(1,t),\\
  y(t) = \int_0^1 \cos^2(\pi\xi) x(\xi,t)\,\,{\rm d}\xi,~~ t> 0,
\end{array}
\end{equation}
where $x(\xi,t)$ represents the temperature at position~$\xi$ and time~$t$.  The initial
temperature profile is~$x^0(\cdot )$,
 and~$u(t)$ denotes  the heat input at time $t$.  Setting $H=\cL^2([0,1],\R)$, defining $b,c\in H$ by $b(\xi)=1$, $c(\xi)= \cos^2 \xi$, and with
 \begin{multline*}
 A\colon\cD(A)\to H,~f\mapsto f^{\prime\prime}\\
 \quad\text{with}~\cD(A):=\{f\in \cW^{1,2}([0,1],\R)|~ f^\prime(0)=0=f^\prime(1)\},
 \end{multline*}
 this example can be written as~\eqref{eq:Abc} satisfying~{{(A1)}}--{{(A3)}}. 

As already mentioned, a  limitation of the above system classes is that boundedness of the control and observation operators in~\eqref{eq:Abc} is assumed and hence
boundary control action is excluded. Moreover, if one introduces Dirichlet boundary conditions
instead of Neumann conditions,
then neither does it  satisfy~(A1)--(A3),
nor does it have a relative degree, nor does the Byrnes-Isidori form exist.

\subsubsection{Infinite-dimensional systems without well-defined relative degree}\label{Ssec:Inf-hard}
%
\noindent
Whilst the discussion in the previous subsection seems quite general, not
every linear, infinite-dimensional system has a well-defined (integer-valued) relative degree: in which case, results as in~\cite{BergLe18,BergPuch20a,IlchRyan02b,IlchSeli16}
are inapplicable. Instead, feasibility of funnel control has to be investigated on an {\it ad hoc} basis; thus, in this subsection, we directly refer to the respective results, although funnel control for the other system classes is discussed in Section~\ref{Sec:FC}. As an initial contribution in this regard, \textit{Reis and Selig} (2015)~\cite{ReisSeli15b}
considered a  boundary controlled heat equation  with Neumann boundary control and a~Dirichlet-like boundary observation,
\begin{equation}\label{eq:heateq}
\left.
\begin{array}{l}
\partial_t x(\xi,t)= \Delta_\xi x(\xi,t),~~(\xi,t)\in\Omega\times\R_{> 0},\\
u(t)=\partial_\nu x(\xi,t),~~(\xi,t)\in\partial\Omega\times\R_{> 0},\\
x(\xi,0)=x^0(\xi),~~\xi\in\Omega,\\
y(t)=\int_{\partial\Omega} x(\xi,t)\,{\rm d}\sigma_\xi,~~(\xi,t)\in\partial\Omega\times\R_{>0},
\end{array}
\right\}
\end{equation}
where   $\Omega\subseteq\R^d$ denotes a~bounded domain with uniformly $\mathcal{C}^2$-boundary~$\partial\Omega$. This example is considerably different from the finite dimensional case and
from~\eqref{eq:heat_eq}.
Although it can be formulated as an infinite-dimensional linear system of the form~\eqref{eq:Abc},
the operators~$b$ and~$c$ are now unbounded;
$b$ maps to the space $\cD(A^*)' \supseteq H = \cL^2(\Omega,\R)$
and~$c$ is defined on a proper subset of~$H$.
Therefore, a Byrnes-Isidori form cannot be expected,
and the product~$cb$, which indicates the relative degree, does not exist.

Nevertheless, feasibility of funnel control is shown in~\cite[Thm.\,4.2]{ReisSeli15b}.
The proof is based on modal approximation of the input-output map by finite-dimensional linear systems with asymptotically stable zero dynamics and relative degree one.
It is shown that funnel control is feasible for these truncated systems and that the sequence of solutions to the
closed-loop truncated systems contains a~convergent subsequence.
The limit of this subsequence will solve a~nonlinear Volterra equation
that represents the input-output behaviour of the heat equation system~\eqref{eq:heateq}
 under funnel control~\eqref{eq:FC}. This solution results in a~well-defined input signal $u\in \cL^2_{\rm loc}(\R_{>0},\R)$. Inserting this signal into the heat equation~\eqref{eq:heateq} yields a~solution to the funnel controlled heat equation in the sense of well-posed linear systems.
 It is then shown that this solution~$x$ solves the partial differential equation formed by~\eqref{eq:heateq},~\eqref{eq:FC} in a~stronger sense and that it has additional regularity and boundedness properties.

Essentially, it is also possible to reformulate~\eqref{eq:heateq} as a regular well-posed system of the form~\eqref{eq:PDE-wp} with the help of Section~5.2 in \textit{Staffans}~\cite{Staf05}. However, this would require a high level of technicalities and it is easier to analyze the system in the boundary control formulation~\eqref{eq:heateq}.
As an extension of those results,
 \textit{Puche, Reis and Schwenninger}~(2021)~\cite{PuchReis21}
consider a fairly general class of boundary control systems of the form
\begin{equation}\label{eq:ABC-infdim}
\begin{aligned}
	\dot{x}(t)&=\mathfrak{A}x(t),\ \ x(0)=x^0,\\
	u(t)&=\mathfrak{B} x(t),\ \
	y(t)=\mathfrak{C} x(t),
\end{aligned}
\end{equation}
where~$\mathfrak{A}$, $\mathfrak{B}$, $\mathfrak{C}$ are linear operators
and the~$\R^m$-valued  functions~$u$
and~$y$ are interpreted as the input and the measured output~$y$, resp., whereas~$x$ is called the state of the system. Typically, $\mathfrak{A}$ is a differential operator on a Hilbert
space~$H$ and~$\mathfrak{B}$, $\mathfrak{C}$ are boundary control and observation operators, resp.
The system class is specified by the following assumptions:

\begin{enumerate}[(i)]
\item\label{ass:1}
The system is \emph{(generalized) impedance passive}, that is,
		\begin{multline*}\label{eq:diss}
		\exists\,\alpha\in\R\ \forall\,x\in\cD(\mathfrak{A}) \colon\\
\text{Re}\, \langle \mathfrak{A} x, x\rangle_H\leq
		\text{Re}\, (\mathfrak{B} x)^\top (\mathfrak{C} x) +\alpha\|x\|_H^2.
		\end{multline*}
\item\label{ass:3}
 {There exists $\beta\ge\alpha$, such that the operator $\mathfrak{A}|_{\ker\mathfrak{C}}$ (i.e., the restriction of $\mathfrak{A}$ to $\ker\mathfrak{C}\subset{\mathcal D}(\mathfrak{A})$) satisfies $\ran (\mathfrak{A}|_{\ker\mathfrak{C}}-\beta I)=H$.}
\item\label{ass:CBonto} The operator
$		\begin{bmatrix}
		\mathfrak{B}\\
		\mathfrak{C}
		\end{bmatrix}:\cD(\mathfrak{A})\to {\R^m\times \R^m}\label{eq:CBop}
$
		is onto.
\end{enumerate}

Under these assumptions, the zero dynamics of system~\eqref{eq:ABC-infdim} are described by a strongly continuous semigroup, which is generated by the restriction of~$\mathfrak{A}$ to the kernel of~$\mathfrak{C}$.
Furthermore, it follows from the
Lumer–Phillips-Theorem that the semigroup is
exponentially stable, if~$\alpha <0$. This property resembles that of asymptotic stability of the zero dynamics in the finite dimensional case.

Feasibility of funnel control can be shown for the class~\eqref{eq:ABC-infdim}, under assumptions~\eqref{ass:1}--\eqref{ass:CBonto} with $\alpha<0$,
by invoking  $m$-dissipative operators and a ``clever'' change of
coordinates. This class encompasses  hyperbolic boundary control systems  in one spatial variable
 (e.g., the lossy transmission line),
hyperbolic systems in several spatial variables
(e.g., the wave equation in two spatial dimensions),
and parabolic systems with Neumann boundary control (e.g., the heat equation). Further classes of boundary controlled port-Hamiltonian systems are discussed in the recent works~\cite{PhilReis23,ReisScha23}, which are amenable to funnel control in the case of co-located input-output structures (i.e., actuators and sensors are placed at the same position) and finite dimensional input and output spaces~-- but this has not been proved yet. Specific examples which belong to this class are Maxwell's equations, Oseen's equations (linearized incompressible flow), and advection-diffusion equations.

Furthermore, in the context of infinite-dimensional systems which do not have a well-defined relative degree, feasibility of funnel control has also been investigated for the
FitzHugh-Nagumo monodomain model (which represents defibrillation processes of the human heart)~\cite{BergBrei21} and the Fokker-Planck equation for a multidimensional Ornstein-Uhlenbeck process~\cite{Berg21}.

\subsection{An overview of the system classes and their relations}
\noindent
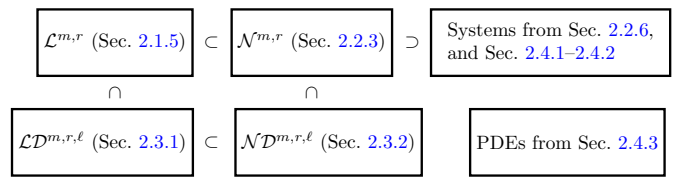
\begin{figure}[h!]
  \centering
\resizebox{\columnwidth}{!}{
  \begin{tikzpicture}[very thick,node distance = 12ex, box/.style={fill=white,rectangle, draw=black}, blackdot/.style={inner sep = 0, minimum size=3pt,shape=circle,fill,draw=black},plus/.style={fill=white,circle,inner sep = 0, minimum size=5pt,thick,draw},metabox/.style={inner sep = 3ex,rectangle,draw,dotted,fill=gray!20!white}]

    \node (box0)		[box,left of=box1,minimum size=8ex,xshift=-11.5ex]{$\cL^{m,r}$\ (Sec.~\ref{Sec:Lmr})};
  \node (box1)		[box,minimum size=8ex]{$\cN^{m,r}$\ (Sec.~\ref{Sec:Nmr})};
  \node [left of=box1,xshift=0ex] {$\subset$};
  \node [below of=box1,yshift=6ex] {\protect\rotatebox[origin=c]{-90}{$\subset$}};
   \node [below of=box0,yshift=6ex] {\protect\rotatebox[origin=c]{-90}{$\subset$}};
  \node (box2)		[box,minimum size=8ex,below of=box1,yshift=0ex,xshift=2ex]{$\cN\cD^{m,r,\ell}$\ (Sec.~\ref{Sssec:DAEs-nonl})};
   \node [left of=box2,xshift=-2ex] {$\subset$};
   \node (box3)		[box,left of=box2,minimum size=8ex,xshift=-15ex]{$\cL\cD^{m,r,\ell}$\ (Sec.~\ref{Sssec:DAEs-lin})};
    \node [right of=box1,xshift=0ex] {$\supset$};
    \node (box4)		[box,right of=box1,minimum size=8ex,xshift=16.5ex]{$\begin{array}{l} \text{Systems from Sec.~\ref{Sssec:special-sc},} \\  \text{and Sec.~\ref{Ssec:InfIntDyn}--\ref{Ssec:IST}} \end{array}$};
    \node (box5)		[box,minimum size=8ex,below of=box4,yshift=0ex,xshift=2ex]{PDEs from Sec.~\ref{Ssec:Inf-hard}};
  \end{tikzpicture}
}
\caption{The system classes and how they are related.}
\label{Fig:sys-classes}
\end{figure}

\section{Funnel control}\label{Sec:FC}
\noindent
Before discussing the different variants of funnel controllers (see also Fig.~\ref{Fig:FC-variants}), we like to emphasize that essentially there is only \textit{one} funnel controller, its structure depending on the relative degree~$r$ of the considered system class. Those system classes, as discussed in the previous section, can also contain differential-algebraic systems or PDE systems or both (as e.g.\ the class $\cN\cD^{m,r,\ell}$). The specific shape of the controller can be adjusted by the choice of certain controller parameters. For DAE systems, the funnel controller is combined with a relative-degree-one controller component for the algebraic part. Once the controller is fixed, it works for \textit{every} member of the considered system class; in particular, a fixed controller may work for finite-dimensional ODE systems \textit{and} for infinite-dimensional PDE systems, without further adjustments.

\begin{figure*}[h!]
  \centering
\resizebox{0.8\textwidth}{!}{
  \begin{tikzpicture}[very thick,node distance = 12ex, box/.style={fill=white,rectangle, draw=black}, blackdot/.style={inner sep = 0, minimum size=3pt,shape=circle,fill,draw=black},plus/.style={fill=white,circle,inner sep = 0, minimum size=5pt,thick,draw},metabox/.style={inner sep = 3ex,rectangle,draw,dotted,fill=gray!20!white}]

    \node (box0)		[box,left of=box1,minimum size=8ex,xshift=-35ex]{$\begin{array}{l} \text{Relative degree one (Sec.~\ref{Sec:reldeg1}):}\\[2mm] u(t)=(N\circ\alpha)\big(\|w(t)\|^2\big) w(t),\\ w(t) = \varphi(t) e(t)\\[2mm] \text{(works for systems from $\cN^{m,1}$)}\end{array}$};
  \node (box1)		[box,minimum size=8ex,xshift=2ex]{$\begin{array}{l} \text{Relative degree~$r$ (Sec.~\ref{Ssec:dervi-fb}):}\\[2mm] u(t)=(N\circ\alpha)\big(\|w(t)\|^2\big) w(t),\\ w(t) = \rho_r\big(\varphi(t) {\mathbf e(t)}\big)\\[2mm] \text{(works for systems from $\cN^{m,r}$)}\end{array}$};
  \node (box2)		[box,below of=box1,minimum size=8ex,yshift=-20ex]{$\begin{array}{l} \text{Controller for DAEs (Secs.~\ref{Sec:reldeg1}--\ref{Ssec:dervi-fb}):}\\[2mm] u(t)=\begin{pmatrix} u_I(t)\\ \hat k\, u_{II}(t)\end{pmatrix}\\[4mm] \text{(works for systems from $\cN\cD^{m,r,\ell}$)}\end{array}$};

  \node (plus) [circle,inner sep = 0,very thick,draw,above of = box1,yshift=5ex,xshift=-17.9ex] {$+$};
  \node (box3)		[box,left of=plus,minimum size=8ex,xshift=-15ex]{Funnel pre-compensator (Sec.~\ref{Ssec:precomp})};
  \node (box4)		[box,right of=plus,minimum size=8ex,xshift=10ex]{$\begin{array}{l} \text{Funnel control with}\\ \text{non-derivative feedback}\\[1mm] \text{(works for the system}\\ \text{class from Sec.~\ref{Sssec:special-sc})}\end{array}$};

  \node (box5)		[box,below of=box0,minimum size=8ex,yshift=-10ex]{$\begin{array}{l} \text{Saturated funnel}\\ \text{controller (Sec.~\ref{Ssec:FC-sat})}\end{array}$};
  \node (box6)		[box,below of=box5,minimum size=8ex,yshift=-2ex,xshift=2ex]{$\begin{array}{l} \dot \psi(t) = -\alpha \psi(t) + \beta + \psi(t) \frac{\kappa(v(t))}{\|e(t)\|},\\ \kappa(v(t)) = \|v(t)-\satu(v(t))\|\end{array}$};
  \node (box7)		[box,below of=box6,minimum size=8ex,yshift=-5ex]{$\begin{array}{l} \text{Input-constrained funnel controller (Sec.~\ref{Ssec:ICFC})}\\ \text{(works for modified system classes)}\end{array}$};

  \draw[->] (box0)--(box1) node[midway,above]{extension};
  \draw[->] (box1)--(box2) node[midway,right]{$\begin{array}{l} e=e_I\\ \varphi = \varphi_I\\ u = u_I\end{array}$};
  \draw[->] (box0.south east)--(box2.north west) node[pos=0.5,right]{\hspace*{-2mm}$\begin{array}{l} e=e_{II}\\ \quad  \varphi = \varphi_{II}\\ \qquad u = u_{II}\end{array}$};
  \draw[->] (box1.north west)--(plus);
  \draw[->] (box3)--(plus);
  \draw[->] (plus)--(box4);
  \draw[->] (box0)--(box5) node[midway,right]{$\satu(u)$};
  \draw[bend right,->]  (box0.west) to node [pos=0.8,left] {$v=\frac{u}{\varphi}$} (box6.west);
  \draw[bend right,<-]  (box0.south west) to node [right] {$\varphi=\frac{1}{\psi}$} (box6.north west);
  \draw[->] (box6)--(box7) node[midway,right]{$\satu(u)$};
  \end{tikzpicture}
}
\caption{Variants of the funnel controller.}
\label{Fig:FC-variants}
\end{figure*}
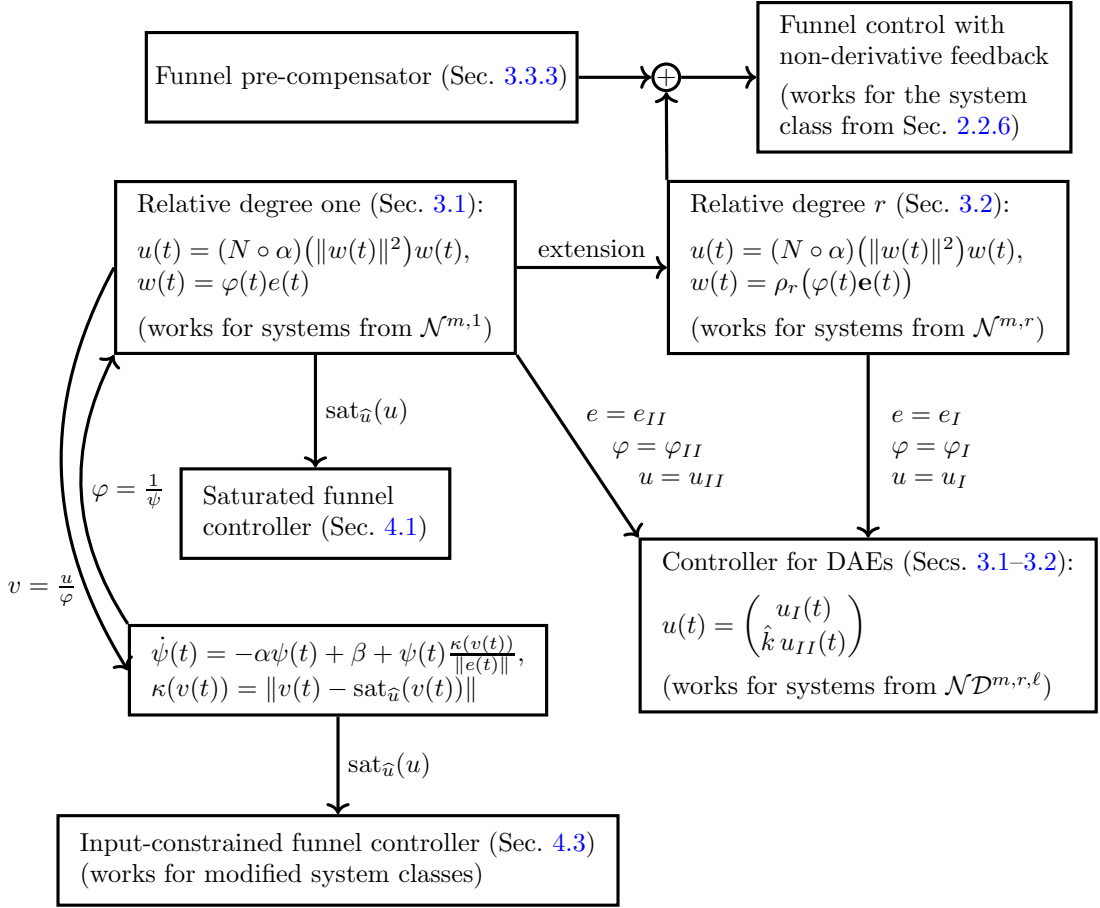

\subsection{The relative-degree-one case}\label{Sec:reldeg1}
\subsubsection{Systems of class $\cN^{m,1}$}
\noindent
Here, as an expository precursor to a result for systems of arbitrary (but known) relative degree (found in Section~\ref{Ssec:dervi-fb}), we focus attention on relative-degree-one of class~$\cN^{m,1}$, described by
systems
\begin{equation}\label{eq:Rd1}
\hspace*{-2ex}
\begin{aligned}
&\dot y(t)= f\big(d(t), \fT(y)(t), u(t)\big),\\
&\text{with}~~\left\{\begin{array}{ll}
y|_{[-h,0]}= y^0\in \cC([-h,0], \R^m), & \text{if $h >0$,}
\\[1ex]
y(0)=y^0\in\R^{m}, &\text{if $h=0$,}
\end{array}
\right.
\end{aligned}
\end{equation}
and $(d,f,\fT)\in\cN^{m,1}$.   Choose (as control design parameters) $\varphi\in\Phi$, a surjection $N\in \cC(\R_{\ge 0},\R)$, and a bijection
$\alpha\in \cC^1 ([0,1),[1,\infty))$.  For example, $N\colon s\mapsto s\sin s$ and $\alpha\colon s\mapsto 1/(1-s)$ suffice. {Let $y_{\rm ref}\in \cW^{1,\infty}(\R_{\ge 0},\R^m)$.
The funnel control is given (formally) as
\begin{equation}\label{FcRd1}
\begin{aligned}
u(t)&=(N\circ\alpha)\big(\|w(t)\|^2\big) w(t),\\ w(t) &= \varphi(t)(y(t)-y_{\rm ref}(t)).
\end{aligned}
\end{equation}
We like to note that the above controller is a more general version of~\eqref{eq:FC}, which is recovered by the choice $\alpha(s) = 1/(1-s)$ and $N(s) = -s$. Although the latter is not a surjection from $\R_{\ge 0}$ to $\R$, its choice is valid in cases discussed in Remark~\ref{Rem:N(s)=-s}.

\begin{thm}\label{Thm:FunCon-Nonl-Rd1}
Consider system~\eqref{eq:Rd1} with $(d,f,\fT)\in \cN^{m,1}$, $m\in\N$. Choose $\varphi\in\Phi$,
a surjection $N\in \cC(\R_{\ge 0},\R)$, and a bijection $\alpha\in \cC^1 ([0,1),[1,\infty))$.
Let $y_{\mathrm{ref}}\in \cW^{1,\infty}(\R_{\ge 0},\R^m)$ be arbitrary and assume that
\begin{equation}\label{ic}
{\varphi(0) \|y(0)-y_{\mathrm{ref}}(0)\| < 1.}
\end{equation}
Then the funnel control~\eqref{FcRd1} applied to~\eqref{eq:Rd1} yields an initial-value problem which has a solution {(in the sense of Carath\'{e}odory)}, every solution can be maximally extended and every maximal solution
$y:\left[-h,\omega\right)\rightarrow \R^m$ has the properties:
\begin{enumerate}[(i)]
\item \label{item-main-1}
$\omega=\infty$ (global existence);
\item \label{item-main-2}
$u\in\cL^\infty (\R_{\ge0},\R^m)$, $y\in\cW^{1,\infty}([-h,\infty),\R^m)$;
\item \label{item-main-3}
the tracking error $e\colon \R_{\ge 0}\to \R^m, ~t\mapsto y(t)-y_{\mathrm{ref}}(t)$ evolves strictly inside the funnel~$\mathcal{F}_{\varphi}$
in the sense that there exists $\varepsilon \in (0,1)$ such that
$\varphi(t)\|e(t)\|\leq \varepsilon$ for all $t\geq 0$.
\end{enumerate}
\end{thm}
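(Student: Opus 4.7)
First I would recast the closed loop as a functional differential initial-value problem for the tracking error $e:=y-y_{\rm ref}$ on the relatively open set $\cF_\varphi=\setd{(t,\xi)\in\R_{\ge 0}\times\R^m}{\varphi(t)\|\xi\|<1}$; hypothesis~\eqref{ic} places $(0,e(0))$ in $\cF_\varphi$. Because $\varphi\in\Phi$, $N$ and $\alpha$ are continuous, $f\in\cNN$ is continuous, $\fT\in\mathbb{T}_h^{m,q}$ satisfies (TP1)--(TP2), and $d\in\cL^\infty$, the right-hand side of the resulting equation is a well-defined operator on continuous functions whose graph lies in $\cF_\varphi$. Standard theory for retarded functional differential equations (along the lines of~\cite{LogeRyan14}) then yields a local Carath\'eodory solution, every solution admits a maximal right extension, and any maximal solution $e\colon[0,\omega)\to\R^m$ satisfies $\text{graph}(e)\subset \cF_\varphi$.

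\textbf{A priori bounds on $[0,\omega)$.}
From $\text{graph}(e)\subset \cF_\varphi$ I have $\|e(t)\|<1/\varphi(t)$; combined with $\liminf_{t\to\infty}\varphi(t)>0$ and the continuity of~$e$, this bounds $e$ on $[0,\omega)$. Because $y_{\rm ref}\in\cW^{1,\infty}$ (and, if $h>0$, $y^0\in\cC([-h,0],\R^m)$), the signal $y=e+y_{\rm ref}$ is bounded on $[-h,\omega)$. The BIBO property~(TP3) then yields $c_2>0$ with $\esup_{t\in[0,\omega)}\|\fT(y)(t)\|\le c_2$, and $d\in\cL^\infty$ by hypothesis, so there exist compact sets $K_p\subset\R^p$, $K_q\subset\R^q$ with $(d(t),\fT(y)(t))\in K_p\times K_q$ for almost every $t\in[0,\omega)$.

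\textbf{Strict evolution inside the funnel, then conclusion.}
Set $V(t):=\|w(t)\|^2$ with $w(t):=\varphi(t)e(t)$, and $k(t):=\alpha(V(t))$. It suffices to show $\sup_{t\in[0,\omega)}V(t)<1$, which I would attempt by contradiction: suppose $\sup V=1$ and pick $t_n\uparrow\omega^*\in(0,\omega]$ with $V(t_n)\to 1$, hence $k(t_n)\to\infty$. Differentiating $V$ along the closed loop and writing $u(t)=N(k(t))w(t)=-s(t)\hat v(t)$ with $\hat v(t):=w(t)/\|w(t)\|$ and $s(t):=-N(k(t))\|w(t)\|$ gives
\[
\tfrac12\dot V(t)=\tfrac{\dot\varphi(t)}{\varphi(t)} V(t)+\varphi(t)\bigl\langle w(t),\,f(d(t),\fT(y)(t),u(t))-\dot y_{\rm ref}(t)\bigr\rangle,
\]
and applying (NP1) with the compact $K_p\times K_q$ of the previous paragraph produces $\langle\hat v(t),f(d(t),\fT(y)(t),u(t))\rangle\ge\chi(s(t))$ with $\sup_{s\in\R}\chi(s)=\infty$. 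Since $N\colon\R_{\ge 0}\to\R$ is continuous and surjective, its range cannot be bounded above or below, forcing $\limsup_{k\to\infty}N(k)=+\infty$ and $\liminf_{k\to\infty}N(k)=-\infty$; hence along $k(t)\to\infty$ the scalar $s(t)$ visits arbitrarily large values of the sign that drives $\chi$ to $+\infty$. A Nussbaum-type integration of the $\dot V$ identity over the sub-intervals on which $s(t)$ has this ``right'' sign~-- absorbing the auxiliary terms via $|\dot\varphi|\le c(1+\varphi)$ from~\eqref{Phi} and $\dot y_{\rm ref}\in\cL^\infty$~-- then forces $V$ to retreat from~$1$, contradicting $V(t_n)\to 1$. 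This yields~(iii) with some $\varepsilon\in(0,1)$; then $k=\alpha(V)$ and hence $u=N(k)w$ are bounded by continuity of $N\circ\alpha$, and together with boundedness of $d,\fT(y),\dot y_{\rm ref}$ this places $\dot e$ in $\cL^\infty$, so $y\in\cW^{1,\infty}([-h,\infty),\R^m)$, giving~(ii); finally, if $\omega<\infty$, the uniform separation $\varphi\|e\|\le\varepsilon<1$ together with the above bounds permits a proper right extension of~$e$, contradicting maximality, so $\omega=\infty$, i.e., (i). The hardest part is unquestionably the Nussbaum-type step: the gain $\alpha(V(t))$ is \emph{static} (state-dependent) rather than integrally adapted as in classical Nussbaum arguments, and the unknown control direction must be resolved by exploiting the forced oscillation of $N(k(t))$ along the trajectory, while simultaneously taming the auxiliary terms in $\dot V$ uniformly in time through~\eqref{Phi} and~(NP1).
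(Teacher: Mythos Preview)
The paper does not give a standalone argument here: it simply records that Theorem~\ref{Thm:FunCon-Nonl-Rd1} is the special case $r=1$ of Theorem~\ref{Thm:FunCon-Nonl}, whose proof is imported from~\cite{BergIlch21}. Your overall architecture---local existence on $\cF_\varphi$, a~priori bounds via~(TP3), then a contradiction argument for strict evolution---matches that of the general proof, and the first two stages are fine.

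The gap is in the contradiction step. You invoke~(NP1) to obtain the \emph{lower} bound $\langle\hat v,f\rangle\ge\chi(s)$; substituted into your $\dot V$ identity this gives a lower bound on $\dot V$, and when $\chi(s(t))$ is large positive this forces $\dot V$ to be large positive---pushing $V$ \emph{towards} $1$, not away from it. So the sentence ``forces $V$ to retreat from~$1$'' does not follow from what you wrote. Furthermore, a ``Nussbaum-type integration'' cannot be the mechanism: the paper stresses (immediately after~\eqref{Nprops}) that only surjectivity of $N$ is required, \emph{not} the Nussbaum properties~\eqref{eq:Nussbaum}, and gives $N(s)=s\sin s$ as an admissible choice that fails~\eqref{eq:Nussbaum}; classical Nussbaum averaging arguments genuinely break for such~$N$. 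The actual argument in~\cite{BergIlch21} is a barrier (level-set) argument, not an integral one: applying~(NP1) with $v=-\hat v$ instead of $\hat v$ yields the \emph{upper} bound $\langle\hat v,f(\delta,z,u)\rangle\le-\chi\bigl(N(\alpha(V))\sqrt V\bigr)$. One then picks $s^*$ with $\chi(s^*)$ large enough to dominate the auxiliary terms; since $V\mapsto N(\alpha(V))\sqrt V$ is continuous and oscillates between $\pm\infty$ as $V\to1^-$ (this is precisely where surjectivity of $N$ enters), it attains $s^*$ at some level $V^*$ arbitrarily close to~$1$. At that level $\dot V<0$ uniformly in $t$ (after handling $t$ near~$0$), so $V$ can never cross $V^*$ from below, giving the uniform bound $\sup V\le V^*<1$ directly.
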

This result is a special case of a more general result in Theorem \ref{Thm:FunCon-Nonl} below.

\subsubsection{{Systems of class $\mathcal{ND}^{m,1,\ell}$}}\label{Ssec:DAE-FC}
\noindent
Funnel control has been shown for a couple of subclasses of systems~\eqref{eq:EABC} in~\cite{Berg14a}, see also~\cite{BergIlch12b,BergIlch12c,Berg16b}. The general case has been considered recently in~\cite{BergLe20}. Although a slightly different approach (with a stronger assumption on~$f_1$) has been considered in~\cite{BergLe20}, in view of~\cite{BergIlch21} it is straightforward to extend the results to the following framework.

Again, choose $\varphi_I\in\Phi$, a surjection $N\in \cC(\R_{\ge 0},\R)$, and a bijection
$\alpha\in \cC^1 ([0,1),[1,\infty))$.  Let $y_{\rm ref}\in \cW^{1,\infty}(\R_{\ge 0},\R^m)$ with $y_{\rm ref} =(y_{{\rm ref},I},y_{{\rm ref},II})$, where $y_{{\rm ref},I} = (y_{{\rm ref},1},\ldots,y_{{\rm ref},\ell})$ and $y_{{\rm ref},II} = (y_{{\rm ref},\ell+1},\ldots,y_{{\rm ref},m})$. The first component of the funnel control is given (formally) as
\begin{equation}\label{FcDAERd1-1}
\begin{aligned}
u_I(t)&=(N\circ\alpha)\big(\|w(t)\|^2\big) w(t),\\
w(t) &= \varphi_I(t)(y_I(t)-y_{{\rm ref},I}(t)).
 \end{aligned}
\end{equation}
Next, we define the second control component~$u_{II}$. Since~$\Gamma$ as in~\eqref{eq:Gamma} plays the role of the \textit{inverse} of the high-frequency gain matrix, cf.~\cite[Rem.~5.3.9\,(iv)]{Berg14a}, but is not assumed invertible, the non-invertible part induces algebraic constraints in the control law. In order to guarantee feasibility of funnel control, these constraints need to be resolved, which is possible when the initial gain is chosen large enough, see also~\cite[Rem.~5.2.1]{Berg14a}. Choosing $\varphi_{II}\in\Phi\cap \cW^{1,\infty}(\R_{\ge 0},\R)$, this leads to a modification of the funnel controller~\eqref{FcRd1} of the form
\begin{equation}\label{FcDAERd1-2}
\begin{aligned}
  u_{II}(t) &= -\hat k\, \alpha\big(\|v(t)\|^2\big) v(t),\\
  v(t) &= \varphi_{II}(t)(y_{II}(t)-y_{{\rm ref},II}(t)),
 \end{aligned}
\end{equation}
where the initial gain $\hat k>0$ {is required} to satisfy
\begin{equation}\label{eq:init-gain}
{\hat k > \frac{1}{\beta}\,\text{ess\,sup}_{t\ge 0} \|f_3(t)\|,}
\end{equation}
{$\beta$ being} the lower bound for $|f_{6}|$ from the definition of~$\mathcal{ND}^{m,1,\ell}$.  In the case of
systems in the subclass $\mathcal{LD}^{m,1,\ell}$, with representative \eqref{eq:linDAE-op}, the latter condition reduces to~$\hat k > \|P_2\|$.

\begin{rem}\label{Rem:cons-init-val}
Since the second equation in~\eqref{eq:DAE} is an algebraic equation we need to guarantee that it is initially satisfied for a solution to exist. In essence, this is the issue of {\it consistency} or {\it well-posedness} of the closed-loop system.
Since $\fT_2\in{\mathbb T}^{m,q}_{h, \text{DAE}}$ is causal it ``localizes'', in a natural way, to an operator $\hat \fT_2: \cC([-h,\omega]\to\R^m)\to \cC^1([0,\omega]\to\R^q)$, cf.~\cite[Rem.~2.2]{IlchRyan09}. With some abuse of notation, we will henceforth not distinguish between~$\fT_2$ and its ``localization'' $\hat \fT_2$.
Then, in the case of relative degree $r=1$, an initial condition $y^0=(y_I^0,y_{II}^0)$ as in~\eqref{eq:nonlDAE-Ic} (for~$h>0$) is called {\it consistent} for the closed-loop system~\eqref{eq:DAE},~\eqref{FcDAERd1-1},~\eqref{FcDAERd1-2}, if
\begin{multline}\label{eq:consistent-IV}
f_2\left(y_I^0(0)\right) +f_3(y_{II}^0(0)) +f_4\big(d_2(0),\fT_2(y^0)(0)\big)\\
+f_5(0)u_I(0)+f_6(0)u_{II}(0)=0,
\end{multline}
where $u_I(0), u_{II}(0)$ are defined by~\eqref{FcDAERd1-1} and~\eqref{FcDAERd1-2}, respectively.
If $h=0$, then the initial values are adjusted accordingly as in~\eqref{eq:nonlDAE-Ic}.\\
Regarding \eqref{eq:DAE} as a model of some real-world dynamical process, it is reasonable to assume consistency in the absence of feedback --
otherwise, the integrity of the model is suspect.
In the context of DAEs, and invoking the behavioural approach~\cite{PoldWill98,Will07}, a clear distinction between inputs, states, and outputs is often not possible during the modeling procedure. The interpretation of variables should be done after the analysis of the model reveals the free variables, which ``can be viewed as unexplained by the model
and imposed on the system by the environment''~\cite{PoldWill98}. In this way the physical meaning of the system variables is respected.
In the presence of feedback, the input variables $u_I$, $u_{II}$ should be part of any consistency condition, as they are constituents of the model.
\end{rem}

Feasibility of the controller~\eqref{FcDAERd1-1},~\eqref{FcDAERd1-2} for~DAE systems ${(d_1,d_2,f_1,\ldots,f_6,\fT_1,\fT_2)}
\in\mathcal{ND}^{m,1,\ell}$ is shown in~\cite[Thm.~4.3]{BergLe20} for $\alpha(s) = 1/(1-s)$; the extension to general $\alpha$ is straightforward.

\begin{thm}\label{Thm:DAE-funnel}
Consider system~\eqref{eq:DAE} with $(d_1,\ldots,d_4,f_1,\ldots,f_5,\fT_1,\fT_2)\in\mathcal{ND}^{m,1,\ell}$, $m\in\N$, $\ell\in\{0,\ldots,m\}$. Choose $\varphi_{I}\in\Phi$, $\varphi_{II}\in\Phi\cap \cW^{1,\infty}(\R_{\ge 0},\R)$,
a surjection $N\in \cC(\R_{\ge 0},\R)$, a bijection $\alpha\in \cC^1 ([0,1),[1,\infty))$, and $\hat k>0$ such that~\eqref{eq:init-gain} holds. Let $y_{\mathrm{ref}}\in \cW^{1,\infty}(\R_{\ge 0},\R^m)$ be arbitrary and assume that
{the initial data is consistent, in the sense that \eqref{eq:consistent-IV} holds, and }
\begin{multline}\label{ic-DAE}
\varphi_I(0) \|y_I(0)-y_{{\rm ref},I}(0)\| < 1\\
\text{and}\quad \varphi_{II}(0) \|y_{II}(0)-y_{{\rm ref},{II}}(0)\| < 1.
\end{multline}
Then the funnel control~\eqref{FcDAERd1-1},~\eqref{FcDAERd1-2} applied to~\eqref{eq:DAE} with $r=1$ yields an initial-value problem which has a solution (in the sense of Carath\'{e}odory), every solution can be maximally extended and every maximal solution
$y:\left[-h,\omega\right)\rightarrow \R^m$ has the properties:
\begin{enumerate}[(i)]
\item
$\omega=\infty$ (global existence);
\item
$u\in\cL^\infty (\R_{\ge0},\R^m)$, $y\in\cW^{1,\infty}([-h,\infty),\R^m)$, $k\in\cL^\infty (\R_{\ge0},\R)$;
\item
the tracking errors $e_I(t)=y_I(t)-y_{{\rm ref},I}(t)$ and $e_{II}(t) = y_{II}(t)-y_{{\rm ref},{II}}(t)$ evolve strictly inside the funnels~$\mathcal{F}_{\varphi_I}$ and~$\mathcal{F}_{\varphi_{II}}$, resp.,  
in the sense that there exist ${\varepsilon \in (0,1)}$ such that
\[
    \forall\, t\ge0:\ \varphi_I(t)\|e_I(t)\|\leq {\varepsilon}~\text{and}~\varphi_{II}(t)\|e_{II}(t)\|\leq {\varepsilon}.
\]
\end{enumerate}
\end{thm}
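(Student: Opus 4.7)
The plan is to reduce the closed-loop DAE system to a purely differential system in $y_I$ (to which Theorem~\ref{Thm:FunCon-Nonl-Rd1} applies) by exploiting the gain condition~\eqref{eq:init-gain} to implicitly solve the algebraic constraint for $y_{II}$. The argument proceeds in four steps, which I sketch below.

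First I would set up local existence via the algebraic constraint. Writing $v(t) = \varphi_{II}(t)(y_{II}(t)-y_{\mathrm{ref},II}(t))$ and substituting the feedback~\eqref{FcDAERd1-2} into the second relation of~\eqref{eq:DAE} with $r=1$ yields, at each time~$t$, an implicit equation
\[
 \hat k\, f_6(t)\alpha(\|v\|^2)\, v = f_2(y_I(t)) + f_3(y_{II}) + f_4\bigl(d_2(t),\fT_2(y_I,y_{II})(t)\bigr) + f_5(t)u_I(t).
\]
Thanks to $\fT_2 \in {\mathbb T}^{m,q}_{h,\mathrm{DAE}}$, the operator value $\fT_2(y_I,y_{II})(t)$ is a $\cC^1$-functional of the history up to $t$; testing with $v$ and using $|f_6(t)|\ge \beta$ together with $\alpha\geq 1$ yields a coercivity estimate on the left-hand side that dominates the $f_3(y_{II})$-contribution whenever $\hat k > \|f_3\|_{\infty}/\beta$. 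Combined with the consistency assumption~\eqref{eq:consistent-IV}, a standard implicit-function / fixed-point argument (as in \cite{BergLe20,BergIlch21}) then produces, on a small interval $[0,\tau]$, a unique continuous $y_{II}$ satisfying the algebraic equation with $\|v(t)\|<1$, hence the controller $u_{II}$ is well defined.

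Next, the DAE is reduced to an ODE. The above local solvability defines a causal operator $\fG$ that expresses $y_{II}(t)$ (equivalently $v(t)$, $u_{II}(t)$) as a functional of $(t,y_I|_{[-h,t]})$ and known data; its regularity follows from the $\cC^1$-structure built into ${\mathbb T}^{m,q}_{h,\mathrm{DAE}}$. Substituting into the first equation of~\eqref{eq:DAE} gives
\[
\dot y_I(t) = f_1\bigl(d_1(t),\, \widetilde{\fT}(y_I)(t),\, u_I(t)\bigr),
\]
where $\widetilde{\fT}$ is the composition of $\fT_1$ with $(y_I \mapsto (y_I, \fG(y_I)))$. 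One verifies that $\widetilde{\fT}$ inherits causality and the local Lipschitz property from $\fT_1$ and $\fG$, and the BIBO property follows because $\fG$ produces outputs of norm at most~$1/\inf \varphi_{II} + \|y_{\mathrm{ref},II}\|_\infty$. Hence $(d_1,f_1,\widetilde{\fT}) \in \cN^{\ell,1}$ and Theorem~\ref{Thm:FunCon-Nonl-Rd1} applied to this reduced system (with funnel $\varphi_I$ and reference $y_{\mathrm{ref},I}$) yields global existence on $[0,\omega)$, boundedness of $u_I$ and $y_I$, and an $\varepsilon_I\in(0,1)$ with $\varphi_I(t)\|e_I(t)\|\le\varepsilon_I$ for all $t$.

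Finally, I would upgrade the local bound $\|v(t)\|<1$ to a uniform bound $\|v(t)\|\le\varepsilon_{II}<1$. With $y_I,u_I,d_1,d_2,\fT_2(y_I,y_{II})$ all bounded (the last by (TP3) applied once $y_{II}$ is known to live in a bounded set, which itself follows from $\|v\|<1$ and boundedness of $y_{\mathrm{ref},II}$), there is a constant $M$ independent of $t$ such that the right-hand side above has norm $\le M + \|f_3\|_\infty$. Taking inner product with $v$ and using $|f_6|\ge\beta$, $\alpha(\|v\|^2)\ge1$, gives
\[
\hat k\beta\,\alpha(\|v(t)\|^2)\|v(t)\|^2 \le (M+\|f_3\|_\infty)\|v(t)\|,
\]
so $\alpha(\|v(t)\|^2)\|v(t)\|$ is globally bounded; since $\alpha(s)\to\infty$ as $s\to1^-$, this forces $\|v(t)\|\le\varepsilon_{II}$ for some $\varepsilon_{II}<1$. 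In particular the maximal existence interval cannot be finite (the closure of the graph would be compact in $\cF_{\varphi_I}\times\cF_{\varphi_{II}}$, contradicting maximality), giving $\omega=\infty$, and $u_{II}\in\cL^\infty$ follows from $\alpha(\|v\|^2)$ being bounded.

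The main obstacle is the implicit-function step underlying the reduction: because $y_{II}$ enters the algebraic equation both directly (through $f_3$) and through the nonlocal term $\fT_2(y_I,y_{II})$, one cannot apply a pointwise implicit function theorem, and the causal/$\cC^1$ structure of ${\mathbb T}^{m,q}_{h,\mathrm{DAE}}$ must be used carefully to justify that $\fG$ is well defined on a full time interval and inherits the required regularity and BIBO properties needed to land in $\cN^{\ell,1}$.
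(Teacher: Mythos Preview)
Your overall strategy---solve the algebraic constraint for $y_{II}$, feed the result back into the differential part, and invoke Theorem~\ref{Thm:FunCon-Nonl-Rd1} for the resulting $\cN^{\ell,1}$ system---is exactly what the paper does, though the paper does not spell out any details: it simply records that the result follows from \cite[Thm.~4.3]{BergLe20} together with Theorem~\ref{Thm:FunCon-Nonl-Rd1} for the $u_I$-component. So your sketch is considerably more explicit than the paper's own ``proof''.

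One point where your route diverges from what the cited reference actually does: you attack the algebraic relation by an implicit-function / fixed-point argument in function space, and then (rightly) flag the self-reference through $\fT_2(y_I,y_{II})$ as the main obstacle. The machinery of the class ${\mathbb T}^{m,q}_{h,\mathrm{DAE}}$ is set up precisely to avoid that obstacle by \emph{index reduction}: since $\fT_2$ maps into $\cC^1$ with $\tfrac{\dd}{\dd t}(\fT_2\zeta)(t)=g(\zeta(t),\tilde\fT(\zeta)(t))$, one may differentiate the algebraic equation in~\eqref{eq:DAE}, obtain an explicit functional ODE for $y_{II}$ (the $\cC^1$-assumptions on $f_2,\ldots,f_6$ and $\varphi_{II}\in\cW^{1,\infty}$ are there for exactly this purpose), and then solve the coupled ODE system directly. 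This bypasses the delicate nonlocal fixed-point step entirely and makes the construction and regularity of your map $\fG$ immediate. Your a~priori estimate for the uniform bound $\|v(t)\|\le\varepsilon_{II}<1$ is correct in spirit; just be careful that ``$\|f_3\|_\infty$'' in your inequality must mean the supremum of $\|f_3(y_{II})\|$ over the bounded range of $y_{II}$ (guaranteed by $\|v\|<1$ and $\varphi_{II}$ bounded away from zero), and that the gain condition~\eqref{eq:init-gain} is what makes the $f_3$-term subordinate to the coercive $\hat k f_6 \alpha(\|v\|^2)v$ term.
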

This result is a consequence of~\cite[Thm.~4.3]{BergLe20} with straightforward modifications accounting for the controller part~\eqref{FcDAERd1-1}, which follows from Theorem~\ref{Thm:FunCon-Nonl-Rd1}.

\subsection{The higher-relative-degree case: derivative feedback}\label{Ssec:dervi-fb}
\noindent
Approaches to funnel control of systems of relative degree greater than one separate into two categories according to the information available
for feedback to the controller.  Throughout, it is (reasonably) assumed that the instantaneous values of the system output and reference signal are
available.  However, in cases of relative degree greater than one, the derivatives of the output and reference signals play a role.
In applications, such  derivatives
may or may not be available for feedback: we
distinguish  these two scenarios via the terminology {\it derivative} and {\it non-derivative feedback},
respectively; the latter is discussed in Section~\ref{Ssec:non-deriv-fb}.

In the context of the first scenario, it might be argued that the control problem is reducible to that of the relative-degree-one case.
For example, consider the relative-degree-two system
\centerline{$\ddot y(t)=f(d(t),y(t),\dot y(t),u(t))$, \  $y(0)=y^0$, \ $\dot y(0)=v^0$,}
 and assume that the output
derivative~$\dot y$ is available for feedback.  Introducing the surrogate output $z(t)=y(t)+\dot y(t)$, the system may be expressed as
\begin{align*}
\dot y (t)&=-y(t)+z(t),\\
\dot z(t)&=-y(t)+z(t) + f(d(t),y(t),\dot y(t),u(t))
\end{align*}
which, on defining ${\mathbf T}_0\colon  \cC (\R_{\ge 0},\R^m)\to\cC (\R_{\ge 0},\R^m)$ by ${\mathbf T}_0(z)(t):=\int_0^t e^{-(t-s)} z(s)\dd s$ and
writing $d_0\colon t\mapsto e^{-t}y^0$, takes the form
\begin{align*}
&\dot z(t)=\tilde f(\tilde d(t), \tilde{\mathbf T}(z)(t),u(t)),~~z(0)=z^0=y^0+v^0,\\
&\tilde d(\cdot )=(d(\cdot),d_0(\cdot)),\quad \tilde{\mathbf T}\colon z\mapsto (z,{\mathbf T}_0(z))
\end{align*}
with $\tilde f\colon (\delta, w, v)=\big((\delta_1,\delta_2),(w_1,w_2),v\big)\mapsto -(w_2+d_2)+w_1 +f(\delta_1,w_2+d_2,w_1-w_2-d_2,v)$.   This is a
system of relative degree one amenable to funnel control through application of Theorem~\ref{Thm:FunCon-Nonl-Rd1}.
However, this simple observation is somewhat misleading.
Application of Theorem~\ref{Thm:FunCon-Nonl-Rd1} ensures prescribed transient and asymptotic behaviour of the {\it surrogate} output~$z(\cdot)$  but
the true objective of causing that the {\it actual} output~$y(\cdot)$ to evolve in a prescribed funnel is not guaranteed.    Attainment of the true
objective using derivative feedback is the subject of Theorem~\ref{Thm:FunCon-Nonl} below.

\subsubsection{Functional differential and nonlinear differential-algebraic systems}
\noindent
We present a recent result on funnel control for the class~$\cN^{m,r}$, which generalizes an earlier contribution from~\cite{BergLe18}, see Section~\ref{Ssec:non-backst}. It also sheds some new light on systems with unknown control directions, which remains an active research area, see e.g.~\cite{ChoiYoo16,Liu16,LiuLim17,GuoXu16,WuLi17,YinGao17,ZhanYang17b}. We stress that several of the classes discussed in those papers
(albeit with some restrictions if necessary) are contained in the class of nonlinear systems~\eqref{eq:nonlSys}. What the aforementioned approaches also have in common is a level of complexity greater than that of the funnel controller that we describe below.
\\[1ex]
{\it Information available for feedback.}\ \
Throughout, it is assumed that the instantaneous value of the output $y(t)$ and its first $r-1$ derivatives $\dot y(t),\ldots,y^{(r-1)}(t)$
are available for feedback.
Admissible reference signals are functions $y_{\textrm{ref}} \in \cW^{r,\infty}(\R_{\ge 0},\R^m)$.
The instantaneous reference value~$y_{\textrm{ref}}(t)$ is assumed to be accessible to the controller
and, if $r\geq 2$, then, for some $\hat r\in\{1,\ldots,r\}$, the derivatives $\dot y_{\textrm{ref}}(t),\ldots,y^{(\hat r-1)}_{\textrm{ref}}(t)$ (a vacuous list if $\hat r=1$) are also
accessible for feedback.  In summary,
 for some $\hat r\in \{1,\ldots,r\}$, the following
instantaneous vector is available
for feedback purposes:
\begin{equation}\label{eq:fback-quantities}
\begin{array}{l}
{\mathbf e(t)}=\big(e^{(0)}(t),\ldots, e^{(\hat r-1)}(t),
y^{(\hat r)}(t),\ldots,y^{(r-1)}(t)\big)\!\in\!\R^{rm},
\\
{e(t):=y(t)-y_{\textrm{ref}}(t),}\end{array}
\end{equation}
with the convention that $e^{(0)}\equiv e$ and ${\mathbf e(t)}=\big(e^{(0)}(t),\ldots, e^{(r-1)}(t)\big)$ if $\hat r=r$.

\noindent
{\it Feedback strategy.}\ \
As before, primary ingredients in the feedback construction, are the funnel control design parameters:
\begin{equation}\label{eq:fcts-FC}
\left.
\begin{array}{l}
{\varphi\in\Phi,~~\text{bounded if \ $\hat r < r$},}\\
N\in \cC(\R_{\ge 0},\R),~~\text{a surjection,}
\\[.6ex]
\alpha\in \cC^1( [0,1), [1,\infty)),~~\text{a bijection.}
\end{array}\right\}
\end{equation}
{These functions are open to choice.} For notational convenience, define
\begin{multline}\label{eq:fcts-FC2}
\cB :=\setdef{w\in\R^m }{ \|w \| < 1}\\
\text{and}~~\gamma\colon\cB\to\R^m, \ w\mapsto  \alpha (\|w\|^2)\, w.
\end{multline}
Next, we introduce continuous maps $\rho_k\colon\cD_k\to \cB$, $k=1,\ldots,r$, recursively as follows:
\begin{equation}\label{eq:fcts-FC3}
\begin{array}{l}
\cD_1\!:=\!\cB,\quad \rho_1\colon\cD_1\to\cB,~\eta_1\mapsto\eta_1,
\\[0.6ex]
\cD_k\!:=\! \setdef{\!\!\!(\eta_1,\ldots,\eta_k)\!\in\!\R^{km}\!\!\!}{\!\!\!\!\!\begin{array}{l} (\eta_1,\ldots,\eta_{k-1})\in\cD_{k-1},\\
 \eta_k\!+\!\gamma(\rho_{k-1}(\eta_1,\ldots,\eta_{k-1}))\!\in\!\cB\end{array}\!\!\!\!\!\!}\!,
\\[1.9ex]
\rho_k\colon\cD_k\!\to\!\cB,\ (\eta_1,\ldots,\eta_k)\mapsto \eta_k +\gamma (\rho_{k-1}(t,\eta_1,\ldots,\eta_{k-1})),
\end{array}
\end{equation}
see also Fig.~\ref{Fig:rho-k}. Note that each of the sets~$\cD_k$ is non-empty and open.
With~$\mathbf{e}$ and~$\rho_r$ defined by~\eqref{eq:fback-quantities}
and~\eqref{eq:fcts-FC3},
the \textit{funnel controller} is given  by
\begin{equation}\label{eq:FC-}
\framebox{$
u(t)=\big(N\circ \alpha\big)(\|w(t)\|^2) \, w(t),\quad w(t)=\rho_r\big(\varphi (t)\mathbf{e}(t)\big),$
}
\end{equation}
which, in the relative degree one case $r=1$, corresponds to \eqref{FcRd1}; see Fig.~\ref{Fig:funnel-controller} for an illustration of its construction.

\begin{figure}[!t]
\begin{center}
 \begin{tikzpicture}[thick,node distance = 12ex, box/.style={fill=white,rectangle, draw=black}, blackdot/.style={inner sep = 0, minimum size=3pt,shape=circle,fill,draw=black},plus/.style={fill=white,circle,inner sep = 0, minimum size=5pt,thick,draw},metabox/.style={inner sep = 3ex,rectangle,draw,dotted,fill=gray!20!white}]
  xshift=-10ex;
  \node (u4) [shape = rectangle, rounded corners = 3mm, fill=white, draw = black]
{$\begin{array}{l}
\rho_{k-1}\colon \cD_{k-1}\subset\R^{(k-1)m} \to\cB,
\\
(\eta_1,\ldots,\eta_{k-1})=:\boldsymbol{\eta}_{k-1}\mapsto \rho_{k-1}(\boldsymbol{\eta}_{k-1})
\end{array}
$};
\node (u5) [shape = rectangle, rounded corners = 3mm, fill=white, below of = u4, draw = black, yshift = -4ex]
{$
\begin{array}{l}
\boldsymbol{\eta}_k :=(\boldsymbol{\eta}_{k-1},\eta_k)\in\R^{(k-1)m}\times\R^m
\\
\cD_k :=\{\boldsymbol{\eta}_k\mid \boldsymbol{\eta}_{k-1}\in\cD_{k-1}, ~\eta_k+\gamma(\rho_{k-1}(\boldsymbol{\eta}_{k-1}))\in\cB \}
\\
\rho_k\colon\cD_k\to\cB,\ \boldsymbol{\eta}_k\mapsto \eta_k+\gamma(\rho_{k-1}(\boldsymbol{\eta}_{k-1}))
\end{array}
$};
 \draw[->,arrows=-Latex] (u4) -- (u5) node[midway,right]{$(k-1)\to k$};
\node (u6) [shape = rectangle, rounded corners = 3mm, fill=white, above of = u4, draw = black, yshift = -2ex]
{\ Recursion initialized by $\rho_1\colon \cD_1:=\cB\to\cB,\ \eta_1\mapsto\eta_1$\ \ };
 \end{tikzpicture}
 \caption{Illustration of the recursive definition of $\rho_k$ in~\eqref{eq:fcts-FC3}.} \label{Fig:rho-k}
 \end{center}
 \end{figure}

The efficacy of funnel control for systems~\eqref{eq:nonlSys} belonging to the class~$ \cN^{m,r}$ was established in~\cite{BergIlch21}:
we restate this result here.

\captionsetup[subfloat]{labelformat=empty}
\begin{figure*}[h!t]
\centering
   \begin{tikzpicture}[very thick,scale=0.7,node distance = 9ex, box/.style={fill=white,rectangle, draw=black}, blackdot/.style={inner sep = 0, minimum size=3pt,shape=circle,fill,draw=black},blackdotsmall/.style={inner sep = 0, minimum size=0.1pt,shape=circle,fill,draw=black},plus/.style={fill=white,circle,inner sep = 0,very thick,draw},metabox/.style={inner sep = 3ex,rectangle,draw,dotted,fill=gray!20!white}]
 \begin{scope}[scale=0.5]
    \node (sys) [box,minimum size=7ex]  {$y^{(r)}(t)= f\big(d(t), \fT(y,\dot{y},\dots,y^{(r-1)})(t), u(t)\big)$};
    \node [minimum size=0pt, inner sep = 0pt,  below of = sys, yshift=3ex] {System $(d,f,\fT)\in  \cN^{m,r}$};
    \node(fork1) [minimum size=0pt, inner sep = 0pt,  right of = sys, xshift=20ex] {};
    \node(end1)  [minimum size=0pt, inner sep = 0pt,  right of = fork1, xshift=18ex] {$\big(y,\ldots,y^{(r-1)}\big)$};

   \draw[->] (sys) -- (end1) node[pos=0.4,above] {};

  \node(FC0) [box, below of = fork1,yshift=0ex,minimum size=7ex] {{${\mathbf e}(t)$ as in~\eqref{eq:fback-quantities}}};
    \node(FC1) [box, below of = FC0,yshift=-5ex,minimum size=7ex] {$w(t)=\rho_r\big(\varphi(t){\mathbf e}(t)\big)$};
    \node(phi) [minimum size=0pt, inner sep = 0pt,  right of = FC0, xshift=18ex] {$\big(y_{\rm ref},\ldots,y_{\rm ref}^{(\hat r-1)}\big)$};
    \draw[->] (fork1) -- (FC0) {};
    \draw[->] (phi) -- (FC0) node[midway,above] {};
    \draw[->] (FC0) -- (FC1) node[midway,right] {${\mathbf e}$};
    \node(FC2) [box, left of = FC1,xshift=-30ex,minimum size=7ex] {$u(t) =\big(N\circ \alpha\big)(\|w(t)\|^2) \, w(t)$};
   \node(alpN) [minimum size=0pt, inner sep = 0pt,  below of = FC2, yshift=-5ex] {};
   \draw[->] (FC1) -- (FC2) node[midway,above] {$w$};
   \node (DP)[box, right of = alpN,xshift=10ex,yshift=2ex,minimum size=5ex]{Design parameters as in~\eqref{eq:fcts-FC}};
   \draw[->] (DP) -| (FC2) node[pos=0.8,right] {$\alpha$, $N$};
   \draw[->] (DP) -| (FC1) node[pos=0.8,right] {$\varphi$};
   \node(fork2) [minimum size=0pt, inner sep = 0pt,  left of = sys, xshift=-20ex] {};
   \draw (FC2) -| (fork2.north) {};
   \draw[->] (fork2.west) -- (sys) node[pos=0.7,above] {$u$};

   \node [minimum size=0pt, inner sep = 0pt,  below of = alpN, yshift=5ex, xshift=-5.5ex] {Funnel controller~\eqref{eq:FC-}};
\end{scope}
\begin{pgfonlayer}{background}
      \fill[lightgray!20] (-6,-3) rectangle (9,-9.2);
      \draw[dotted] (-6,-3) -- (9,-3) -- (9,-9.2) -- (-6,-9.2) -- (-6,-3);
  \end{pgfonlayer}
  \end{tikzpicture}
\caption{Construction of the funnel controller~\eqref{eq:FC-} with design parameters~\eqref{eq:fcts-FC}.}
\label{Fig:funnel-controller}
\end{figure*}

\begin{thm}\label{Thm:FunCon-Nonl}
Consider system~\eqref{eq:nonlSys} with $(d,f,\fT)\in \cN^{m,r}$, $m,r\in\N$, and initial data as in~\eqref{eq:nonlSysIc}.
Choose the triple $(\alpha,N,\varphi)$ of funnel control design parameters as in~\eqref{eq:fcts-FC}
and let $y_{\mathrm{ref}}\in \cW^{r,\infty}(\R_{\ge 0},\R^m)$ be arbitrary.
Assume that, for some $\hat r\in\{1,\ldots,r\}$,
the instantaneous vector ${\mathbf{e}}(t)$, given by~\eqref{eq:fback-quantities}, is available for feedback and the following holds:
\begin{equation}\label{ic-}
\varphi(0){\mathbf{e}}(0)\in \cD_r,
\end{equation}
(trivially satisfied if $\varphi(0)=0$).
Then the funnel control~\eqref{eq:FC-} applied to~\eqref{eq:nonlSys} yields an initial-value problem which has a solution (in the sense of Carath\'{e}odory), every solution can be maximally extended and every maximal solution
$y:\left[-h,\omega\right)\rightarrow \R^m$ has the properties:
\begin{enumerate}[(i)]
\item \label{item-main-1-}
$\omega=\infty$ (global existence);
\item \label{item-main-2-}
$u\in\cL^\infty (\R_{\ge0},\R^m)$, $y\in\cW^{r,\infty}([-h,\infty),\R^m)$;
\item \label{item-main-3-}
the tracking error $e\colon \R_{\ge 0}\to \R^m$ as in~\eqref{eq:fback-quantities} evolves strictly inside the funnel~$\mathcal{F}_{\varphi}$
in the sense that there exists $\varepsilon \in (0,1)$ such that
$\varphi(t)\|e(t)\|\leq \varepsilon$ for all $t\geq 0$.
\item \label{item-main-4}
If $\hat r> 1$ and~$\varphi$ is unbounded, then $e^{(k)}(t)\to 0$ as $t\to\infty$, \  $k=0,\ldots,\hat r-1$.
\end{enumerate}
\end{thm}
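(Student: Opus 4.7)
The plan is to recast the closed-loop initial-value problem as a functional differential equation on the relatively open domain $\cE:=\{(t,\mathbf{e})\in\R_{\ge 0}\times\R^{rm}\mid \varphi(t)\mathbf{e}\in\cD_r\}$, establish local existence there, and then show by a backstepping-style downward induction combined with a Nussbaum-type contradiction at the top level that the maximal solution cannot approach $\partial\cE$. Substituting $u(t)=(N\circ\alpha)(\|w_r(t)\|^2)\,w_r(t)$ with $w_r(t)=\rho_r(\varphi(t)\mathbf{e}(t))$ into \eqref{eq:nonlSys} yields a right-hand side that is continuous on $\cE$, causal, and locally Lipschitz through (TP2). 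Standard theory for retarded functional differential equations then produces a maximal Carath\'eodory solution $y:[-h,\omega)\to\R^m$ with $(t,\mathbf{e}(t))\in\cE$ on $[0,\omega)$; the task is to prove $\omega=\infty$ and that $\varphi\mathbf{e}$ remains in a compact subset of~$\cD_r$.

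Next, I would carry out a downward induction on the auxiliary quantities $w_k(t):=\rho_k(\varphi(t)(e^{(0)}(t),\ldots,e^{(k-1)}(t)))$ for $1\le k\le\hat r$, with the obvious modification that $e^{(k-1)}$ is replaced by $y^{(k-1)}$ for $k>\hat r$. The defining recursion \eqref{eq:fcts-FC3} reads $w_k=\varphi\cdot(k\text{-th block of }\mathbf{e})+\gamma(w_{k-1})$; hence $w_k\in\cB$ together with an a~priori bound $\|w_{k-1}(t)\|\le 1-\varepsilon_{k-1}$ yields a uniform bound on the $k$-th block and, via $|\dot\varphi|\le c(1+\varphi)$, an essential bound on $\dot w_{k-1}$. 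A standard funnel argument then propagates the bound from $w_k$ downward to $w_{k-1}$: since $\gamma$ is high-gain near $\partial\cB$, the map $w_{k-1}$ cannot reach the boundary. In this way the claim $\|w_1(t)\|\le 1-\varepsilon_1$ reduces to the single top-level estimate $\|w_r(t)\|\le 1-\varepsilon_r$ on~$[0,\omega)$.

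The \textbf{main obstacle} is this top-level estimate, and it is exactly the place where the unknown control direction forces a Nussbaum-type contradiction. Suppose $\limsup_{t\nearrow\omega}\|w_r(t)\|=1$. Boundedness of every $w_k\in\cB$ together with $y_{\mathrm{ref}}\in\cW^{r,\infty}$ shows that $(y,\dot y,\ldots,y^{(r-1)})$ is contained in a fixed compact set on $[0,\omega)$, so (TP3) places $\fT(y,\dot y,\ldots,y^{(r-1)})(t)$ in some compact $K_q\subset\R^q$, while $d(t)$ lies in the compact $K_p:=\overline{\mathrm{ess\,ran}\,d}\subset\R^p$. Writing $\kappa(t):=\alpha(\|w_r(t)\|^2)\ge 1$ and differentiating $\tfrac12\|w_r(t)\|^2$ along the dynamics, after substituting $y^{(r)}$ from \eqref{eq:nonlSys} with $u=N(\kappa)w_r$ and invoking (NP1) with the compact sets $K_p,K_q$, one obtains an inequality of the form
\[
\ddt \tfrac12\|w_r(t)\|^2 \le C\bigl(1+\varphi(t)\bigr)-\varphi(t)\,\|w_r(t)\|\,\chi\bigl(\sigma\|w_r(t)\|N(\kappa(t))\bigr),
\]
where $\chi$ is the function from Definition~\ref{adnon} and $\sigma\in\{-1,+1\}$ is the (unknown) control direction. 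Since $N$ is a continuous surjection $\R_{\ge 0}\to\R$ and $\chi(s)\to\infty$ as $s\to\infty$, a Nussbaum-type argument mirroring the scalar prototype of \eqref{eq:Nussbaum} rules out unboundedness of~$\kappa$ and therefore contradicts $\|w_r\|\to 1$.

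Finally, assertions~(i)--(iv) follow without further difficulty. The bound $\|w_r\|\le 1-\varepsilon_r$ together with the downward induction yields a compact $K\subset\cD_r$ with $\varphi(t)\mathbf{e}(t)\in K$ on $[0,\omega)$; precompactness of the graph in $\cE$ contradicts maximality unless $\omega=\infty$, giving~(i). Boundedness of $\kappa$ implies boundedness of $u$ and, via \eqref{eq:nonlSys} together with the BIBO property of~$\fT$, of each $y^{(k)}$, yielding~(ii). Statement~(iii) is immediate from $w_1=\varphi e$ with $\varepsilon:=\varepsilon_1$. For~(iv), the assumption that $\hat r>1$ and $\varphi$ is unbounded, combined with $\varphi(t)\|e^{(k)}(t)\|\le C$ for $k=0,\ldots,\hat r-1$, forces $e^{(k)}(t)\to 0$ as $t\to\infty$.
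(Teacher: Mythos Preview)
The paper does not prove this theorem itself but refers to~\cite{BergIlch21}; comparing against the argument there, your sketch has the induction running in the wrong direction, which makes the top-level step circular.

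The crux is your claim that ``boundedness of every $w_k\in\cB$ together with $y_{\mathrm{ref}}\in\cW^{r,\infty}$ shows that $(y,\dot y,\ldots,y^{(r-1)})$ is contained in a fixed compact set''. This is false for $r\ge 2$: from $w_2=\varphi e_2+\gamma(w_1)\in\cB$ you only obtain $\|\varphi e_2\|<1+\|\gamma(w_1)\|$, and $\|\gamma(w_1)\|=\alpha(\|w_1\|^2)\|w_1\|$ is unbounded as $\|w_1\|\to 1^-$. Hence without a \emph{strict} bound $\|w_1\|\le 1-\varepsilon_1$ you cannot conclude that $\dot y$ is bounded, so you cannot invoke~(TP3) to place $\fT(y,\ldots,y^{(r-1)})$ in a compact $K_q$, and~(NP1) is inapplicable. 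Your top-level estimate for $w_r$ therefore presupposes precisely the strict lower-level bounds that you propose to derive \emph{from} it.

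The proof in~\cite{BergIlch21} proceeds upward. First, using only $w_2\in\cB$ (automatic on the domain), the inequality
\[
\ddt\tfrac12\|w_1\|^2 \le (\dot\varphi/\varphi)\|w_1\|^2 + \langle w_1,w_2\rangle - \alpha(\|w_1\|^2)\|w_1\|^2 + C
\]
and the blow-up of $\alpha$ near~$1$ force $\|w_1\|\le 1-\varepsilon_1$. Inductively, having $\|w_j\|\le 1-\varepsilon_j$ for $j<k$ (so that $\gamma(w_{k-1})$ and $D\gamma(w_{k-1})\dot w_{k-1}$ are bounded) together with only $w_{k+1}\in\cB$, the same mechanism yields $\|w_k\|\le 1-\varepsilon_k$ for $k=2,\ldots,r-1$. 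Only then are $y,\ldots,y^{(r-1)}$ bounded, (TP3) applies, and the step $k=r$ is handled via~(NP1). Incidentally, that final step is not a Nussbaum argument in the sense of~\eqref{eq:Nussbaum}; only the surjectivity of $N\colon\R_{\ge 0}\to\R$ together with $\sup_{s\in\R}\chi(s)=\infty$ from~(NP1) is used, and the quantity that appears is $\chi(-N(\kappa(t)))$, not $\chi(\sigma\|w_r\|N(\kappa))$.
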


\begin{rem}\label{Rem:IMP}
The above result presents a possible anomaly:  performance of funnel control might seem to  contradict the {\it internal model principle} which asserts
that ``a regulator is
structurally stable only if the controller~[\ldots]   incorporates~[\ldots]
 a suitably reduplicated model of the dynamic structure of the exogenous signals which
the regulator is required to process''~\cite[p.\,210]{Wonh79}.   However, as discussed in \cite{BergIlch21}, this potential anomaly may be resolved.
The internal model principle applies in the context of {\it exact} asymptotic tracking of reference signals.  In the case of a {\it bounded} funnel function~$\varphi$,
only {\it approximate} tracking, with non-zero prescribed asymptotic accuracy, is assured and so the anomaly evaporates.
But what of the case of an unbounded funnel function~$\varphi$, which is permissible whenever~$\hat r =r$?    In this case, exact asymptotic tracking is achieved.
Returning to the control-theoretic origins of the internal model principle, summarised in~\cite[p.\,210]{Wonh79} as ``every good regulator must incorporate a
model of the outside world'', we regard the term ``good regulator'' as  most pertinent.
A good regulator should exhibit robustness with respect to sufficiently small disturbances/noise/modelling inaccuracies.
However, the case of unbounded $\varphi$ inevitably
contains (easily constructed) examples of $L^2$ disturbances of positive, but arbitrarily small norm, that cause the controlled process to violate the requirement of
strict evolution within the funnel.
 Whilst of theoretical interest, the case of unbounded~$\varphi$ is of limited practical utility.
\end{rem}

\begin{rem}\label{Rem:N(s)=-s}
Note that a ``switching function'' $N$ is used in the controller~\eqref{eq:FC-} to encompass the case of unknown control direction.  If the control direction is known,
then a simpler design can be used. More precisely, if, for fixed $\sigma\in\{-1,+1\}$ known to the controller,  the function~$\chi:\R\to\R$
 in~(NP1) is such that $\chi(\sigma s)\to\infty$ as $s\to\infty$
for all compact $K_p\subset \R^p$ and compact $K_q\subset \R^q$, then, setting $N:s\mapsto \sigma s$, the assertions of  Theorem~\ref{Thm:FunCon-Nonl}
remain valid.
\end{rem}

\begin{rem}
We comment on the possible choices of the funnel control design parameters in~\eqref{eq:fcts-FC} and provide some guidance for practical applications. First of all, the function~$\varphi\in \Phi$ defines the performance funnel for the tracking error, hence its choice should reflect the safety specifications of the considered application as well as physical limitations. Some choices are discussed in Section~\ref{Sec:intro}, another typical choice is $\varphi(t) = (a e^{-bt} + c)^{-1}$ for appropriate parameters $a,b,c>0$ taking the aforementioned criteria into account;  here the funnel boundary tends, for~$t\to\infty$, to the constant funnel with boundary $c^{-1}$. For the function~$N$ it is recommended to use the simple versions mentioned in Remark~\ref{Rem:N(s)=-s}, if the control direction is known. If it is unknown, then $N(s) = s\sin s$ is a standard choice. For the bijection $\alpha$ it usually suffices to consider the choice $\alpha(s) = 1/(1-s)$.
\end{rem}

\begin{rem}
 Other versions of the funnel controller, that are not encompassed by the design~\eqref{eq:FC-} can be found in the literature, see e.g.~\cite{BergLe18,IlchRyan02b}. These modifications do not change the qualitative behaviour of the controller and will not be discussed here in detail. Depending on the application, one of the modifications might be preferred over the controller~\eqref{eq:FC-}.
\end{rem}
For~DAE systems~\eqref{eq:DAE} the controller~\eqref{eq:FC-} needs to be adjusted appropriately, that is for $y_{{\rm ref},I}\in \cW^{r,\infty}(\R_{\ge 0},\R^m)$ we define the signal
\begin{multline*}
    {\mathbf e_I(t)}\!=\!\big(e_I^{(0)}(t),\ldots, e_I^{(\hat r-1)},
y_I^{(\hat r)}(t),\ldots,y_I^{(r-1)}(t)\big)\!\in\!\R^{rq},
\\
{e_I(t)\!=\!y_I(t)-y_{{\rm ref},I}(t),}
\end{multline*}
and for $\varphi_I \in\Phi$ we set
\begin{equation}\label{eq:FC-DAE}
u_I(t)=\big(N\circ \alpha\big)(\|w(t)\|^2) \, w(t),\quad w(t)=\rho_r\big(\varphi_I (t){\mathbf e_I(t)}\big),
\end{equation}
which is combined with the controller~\eqref{FcDAERd1-2} for the algebraic part that stays unchanged. Furthermore, we extend the notion of consistent initial values from~\eqref{eq:consistent-IV} to arbitrary relative degree, i.e., to the condition
\begin{multline}\label{eq:consistent-IV-r>1}
f_2\left(y_I^0(0),\ldots,\big(y_I^0\big)^{(r-1)}(0)\right) +f_3(y_{II}^0(0))\\
 +f_4\big(d_2(0),\fT_2(y_I^0,y_{II}^0)\big) +f_5(0)u_I(0)+f_6(0)u_{II}(0)=0.
\end{multline}

Funnel control for~DAE systems with arbitrary relative degree has been discussed in~\cite{Berg14a,BergIlch12c}, but for system classes smaller than~\eqref{eq:DAE}. The result given below is a consequence of~\cite[Thm.~4.3]{BergLe20}, with slight modification. In fact, it is a straightforward combination of Theorems~\ref{Thm:FunCon-Nonl} and~\ref{Thm:DAE-funnel}, since the controller~\eqref{FcDAERd1-2} of the algebraic part does not change.

\begin{thm}\label{Thm:FunCon-Nonl-DAE}
Consider the DAE~system~\eqref{eq:DAE} with
\begin{multline*}
(d_1,\ldots,d_4,f_1,\ldots,f_5,\fT_1,\fT_2)\in\mathcal{ND}^{m,r,\ell},\\
m,r\in\N,~~\ell\in\{0,\ldots,m\}.
\end{multline*}
Choose a triple $(\alpha,N,\varphi_I)$ of funnel control design parameters as in~\eqref{eq:fcts-FC}, $\varphi_{II}\in\Phi\cap \cW^{1,\infty}(\R_{\ge 0},\R)$ and $\hat k>0$ such that~\eqref{eq:init-gain} holds. Let $y_{\rm ref}$ be such that
$y_{{\rm ref},I}\in \cW^{r,\infty}(\R_{\ge 0},\R^{\ell})$
and
$y_{{\rm ref},{II}}\in \cW^{1,\infty}(\R_{\ge 0},\R^{m-\ell})$,
assume that the initial data is consistent, in the sense that \eqref{eq:consistent-IV-r>1} holds, and
\begin{multline}\label{ic-DAE-}
\varphi_I(0) {\mathbf e_I(0)}\! \in\! \cD_r\ \text{(as in~\eqref{eq:fcts-FC3}
with $m=\ell$)}\\ \text{and}\ \ \varphi_{II}(0) \|y_{II}(0)-y_{{\rm ref},{II}}(0)\| \!<\! 1.
\end{multline}
Then the funnel control~\eqref{eq:FC-DAE},~\eqref{FcDAERd1-2} applied to~\eqref{eq:DAE} with $r=1$ yields an initial-value problem which has a solution (in the sense of Carath\'{e}odory), every solution can be maximally extended and every maximal solution
$y:\left[-h,\omega\right)\rightarrow \R^m$ has the properties:
\begin{enumerate}[(i)]
\item
$\omega=\infty$ (global existence);
\item
$u\in\cL^\infty (\R_{\ge0},\R^m)$, $y\in\cW^{r,\infty}([-h,\infty),\R^m)$, $k\in\cL^\infty (\R_{\ge0},\R)$;
\item
the tracking errors $e_I(t)=y_I(t)-y_{{\rm ref},I}(t)$ and $e_{II}(t) = y_{II}(t)-y_{{\rm ref},{II}}(t)$ evolve strictly inside the funnels~$\mathcal{F}_{\varphi_I}$ and~$\mathcal{F}_{\varphi_{II}}$, resp.,
in the sense that there exists ${\varepsilon \in (0,1)}$ such that
$\varphi_I(t)\|e_I(t)\|\leq {\varepsilon}$ and $\varphi_{II}(t)\|e_{II}(t)\|\leq {\varepsilon}$ for all $t\ge 0$.
\end{enumerate}
\end{thm}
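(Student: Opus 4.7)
The plan is to combine the analyses of Theorem~\ref{Thm:DAE-funnel} (for the algebraic part) and Theorem~\ref{Thm:FunCon-Nonl} (for the differential part of relative degree~$r$), exploiting the fact that the two controllers~\eqref{eq:FC-DAE} and~\eqref{FcDAERd1-2} are essentially decoupled: the first acts on $y_I$ alone via derivative feedback, the second is a relative-degree-one funnel controller for $y_{II}$. The strategy is to resolve the algebraic equation locally for $y_{II}$ and reduce the closed loop to a functional differential equation of the form already covered by Theorem~\ref{Thm:FunCon-Nonl}.

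First I would establish local existence by resolving the algebraic constraint. Substituting~\eqref{FcDAERd1-2} into the second equation of~\eqref{eq:DAE} yields an implicit equation in $y_{II}(t)$:
\[
f_2(y_I,\ldots,y_I^{(r-1)}) + f_3(y_{II}) + f_4\bigl(d_2,\fT_2(y_I,y_{II})\bigr) + f_5 u_I - \hat k\, f_6\, \alpha(\|v\|^2)\, v = 0,
\]
where $v(t) = \varphi_{II}(t)\bigl(y_{II}(t) - y_{{\rm ref},II}(t)\bigr)$. Because $|f_6(t)| \geq \beta > 0$, $\alpha$ is a strictly increasing bijection onto $[1,\infty)$, and $\hat k \beta > \esup_{t\ge 0}\|f_3(t)\|$ by~\eqref{eq:init-gain}, an implicit-function / continuation argument (mirroring the one underlying Theorem~\ref{Thm:DAE-funnel}) shows that, so long as $\|v(t)\|$ stays strictly below $1$, this equation admits a unique local solution $y_{II}(t)$ depending continuously (and, thanks to $\fT_2\in {\mathbb T}^{m,q}_{h,\mathrm{DAE}}$, differentiably) on the $y_I$-history. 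The consistency assumption~\eqref{eq:consistent-IV-r>1} guarantees solvability at $t=0$, and~\eqref{ic-DAE-} ensures the initial error lies strictly inside both funnels.

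Next, substituting this resolved $y_{II}$ back into the first equation of~\eqref{eq:DAE} produces a pure functional differential equation
\[
y_I^{(r)}(t) = f_1\bigl(d_1(t),\, \tilde\fT_1(y_I,\dot y_I,\ldots,y_I^{(r-1)})(t),\, u_I(t)\bigr),
\]
where $\tilde\fT_1$ absorbs the resolved dependence on $y_{II}$ and on $\fT_2$. I would then verify that $\tilde\fT_1$ inherits causality and the local Lipschitz property from $\fT_1$ and $\fT_2$, and—crucially—that it satisfies the BIBO property, so that $(d_1,f_1,\tilde\fT_1)\in \cN^{\ell,r}$. Theorem~\ref{Thm:FunCon-Nonl} then applies directly to the feedback~\eqref{eq:FC-DAE}, yielding $\omega=\infty$, $u_I\in \cL^\infty$, $y_I\in \cW^{r,\infty}$, and strict evolution of $e_I$ inside $\cF_{\varphi_I}$.

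Finally, with $y_I,\dot y_I,\ldots,y_I^{(r-1)}$ essentially bounded and $\fT_2$ BIBO, the Theorem~\ref{Thm:DAE-funnel} argument applies to the resolved algebraic relation: a contradiction argument exploits the lower bound $\hat k\beta > \esup\|f_3\|$ to show that the gain term $\hat k\alpha(\|v\|^2)v$ would blow up unboundedly if $\|v(t)\|\to 1$, which cannot be balanced by the bounded driving terms $f_2+f_3+f_4+f_5 u_I$. This forces strict evolution of $e_{II}$ in $\cF_{\varphi_{II}}$ and boundedness of the gain. The main obstacle will be the BIBO verification for $\tilde\fT_1$: one must show that bounded $y_I$-histories produce $y_{II}$ uniformly separated from the boundary of $\cF_{\varphi_{II}}$, which requires an \emph{a~priori} estimate derived from~\eqref{eq:init-gain} that is uniform in time rather than merely local. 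Once this uniform bound is secured, both funnel constraints are maintained simultaneously and the remaining assertions follow precisely as in the proofs of Theorems~\ref{Thm:DAE-funnel} and~\ref{Thm:FunCon-Nonl}.
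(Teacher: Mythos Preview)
Your proposal is correct and matches the paper's approach: the paper gives no detailed proof but simply states that the result ``is a consequence of~[BergLe20, Thm.~4.3], with slight modification'' and is ``a straightforward combination of Theorems~\ref{Thm:FunCon-Nonl} and~\ref{Thm:DAE-funnel}, since the controller~\eqref{FcDAERd1-2} of the algebraic part does not change.'' Your elaboration --- resolving the algebraic constraint for $y_{II}$, substituting into the $y_I$-equation to obtain a system in~$\cN^{\ell,r}$, and then invoking the two cited theorems --- is precisely how that combination is carried out, and you have correctly flagged the uniform BIBO bound for the induced operator $\tilde\fT_1$ as the one nontrivial verification (handled in~[BergLe20]).
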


\subsubsection{Antecedent approaches}\label{Ssec:non-backst}
\noindent
\paragraph{A relative degree two funnel controller}
In the case of single-input, single-output,  nonlinear systems with relative degree two and asymptotically stable zero dynamics,
a funnel controller has been proposed by \textit{Hackl, Hopfe, Ilchmann,  Mueller,  Trenn} (2013)~\cite{HackHopf13} (see also the modification in~\cite{Hack11}). The aim in the control design was to avoid the backstepping procedure from~\cite{IlchRyan07} (see Section~\ref{Ssec:backstep}) by using a linear combination of the output and its derivative instead.

The systems which are considered in~\cite{HackHopf13} are of the form~\eqref{eq:nonlSys} with $m=1$, $r=2$, $g=0$ and $f(\delta,\eta,u) = f_1(\delta,\eta) + f_2(\delta,\eta) u$ for suitable functions~$f_1$ and~$f_2$. It is assumed that $f_2(\delta,\eta) > 0$ everywhere. The work~\cite{HackHopf13} introduces a~funnel controller which feeds back the error~$e$ and its derivative. Compared to Theorem~\ref{Thm:FunCon-Nonl},
 it is possible to directly prescribe the evolution of the error derivative. The controller reads
\begin{equation}\label{eq:PDfunnel}
\boxed{
\begin{array}{l}
u(t)=-k_0^2(t)e(t)-k_1(t)\dot{e}(t),\\[2mm]
k_0(t)=\frac{\varphi_0(t)}{1-\varphi_0(t)|e(t)|},\quad
k_1(t)=\frac{\varphi_1(t)}{1-\varphi_1(t)|\dot{e}(t)|}.
\end{array}
}
\end{equation}
The funnel functions $\varphi_0$ for the error and $\varphi_1$ for the derivative of the error have to satisfy $(\varphi_0,\varphi_1)\in \Phi^2$; the latter class is defined by
\[
    \Phi^2:=\setd{\!\!(\varphi_0,\varphi_1)\in\Phi\times \Phi}{\!\!\!\begin{array}{l} \exists\, \delta> 0\ \text{for a.a.}\ t> 0:\\      (1/\varphi_1)(t) + \ddt (1/\varphi_0)(t) \geq \delta\end{array}\!\!\!},
\]
where $\Phi$ is as in~\eqref{Phi}. The motivation for the definition of~$\Phi^2$ is that the derivative funnel $\mathcal{F}_{\varphi_1}$ must be large enough to allow the error
to follow the funnel boundaries; for more details see~\cite{HackHopf13}. Feasibility of the control~\eqref{eq:PDfunnel} is shown in~\cite[Thm.~3.1]{HackHopf13}.

As shown in~\cite{Hack11,Hack12}, see also~\cite[Sec.~9.4.4]{Hack17}, the equation for~$u(t)$ in the controller~\eqref{eq:PDfunnel} can be modified such that
\begin{equation}\label{eq:PDfunnel-mod}
     u(t) = -k_0(t)^2 e(t) - k_0(t) k_1(t) \dot e(t)
\end{equation}
and feasibility of the control is still guaranteed;
 in~\cite{Hack11,Hack12} this is shown for a certain class of linear systems, but the extension to nonlinear systems~\eqref{eq:nonlSys} as discussed above is straightforward.

The modification~\eqref{eq:PDfunnel-mod} is advantageous compared to~\eqref{eq:PDfunnel}, since the latter yields a badly damped closed-loop system
response and may lead to admissibility problems in applications since speed measurement is usually very noisy. The controller~\eqref{eq:PDfunnel} (and its modification~\eqref{eq:PDfunnel-mod}) is simple and its practicability has been verified experimentally. Its advantage is that the performance of both~$e$ and~$\dot e$ may be prescribed. However, there is no straightforward extension to systems with relative degree larger than two.

\paragraph{Non-backstepping feedback for higher relative degree} A funnel controller for systems with arbitrary relative degree $r\in\N$ was introduced
by \textit{Berger, Hoang, and Reis} (2018)~\cite{BergLe18} for systems of the form~\eqref{eq:nonlSys} with~$g=0$ and $f(\delta,\eta,u) = f_1(\delta,\eta) + f_2(\delta,\eta) u$ for suitable functions~$f_1$ and~$f_2$ such that $f_2(\delta,\eta) + f_2(\delta,\eta)^\top \succ 0$ everywhere.
This controller, which does not involve any backstepping procedure, is an output error feedback of the form
$u(t) = F(t,e(t),\dot{e}(t),\ldots,e^{(r-1)}(t))$, where $e(t)= y(t) - y_{\rm \scriptsize ref}(t)$
evolves within the performance funnel~$\mathcal{F}_{\varphi}$ which is determined by a function~$\varphi$ belonging to
\begin{equation}
\Phi_r :=
\left\{
\varphi\in  \cC^r(\R_{\ge 0},\R)
\left|\!\!\!
\begin{array}{l}
\text{ $\varphi, \dot \varphi,\ldots,\varphi^{(r)}$ are bounded,}\\
\text{ $\varphi (\tau)> 0$ for all $\tau> 0$,}\\
 \text{ and }  \liminf_{\tau\rightarrow \infty} \varphi(\tau) > 0
\end{array}\!\!\!
\right.
\right\}.
\label{eq:Phir}\end{equation}
The controller is of the form
\begin{equation}\label{eq:fun-con}
\boxed{\begin{aligned}
e_0(t)&=e(t) = y(t) - y_{\rm \scriptsize ref}(t),\\
e_1(t)&=\dot{e}_0(t)+k_0(t)\,e_0(t),\\
&\ \vdots \\
e_{r-1}(t)&=\dot{e}_{r-2}(t)+k_{r-2}(t)\,e_{r-2}(t),\\
k_i(t)&=\frac{1}{1-\varphi_i(t)^2\|e_i(t)\|^2},\quad i=0,\dots,r-1, \\
 u(t) &= -k_{r-1}(t)\,e_{r-1}(t),
\end{aligned}
}
\end{equation}
where the reference signal and funnel functions satisfy:
\begin{multline}\label{eq:fun-con-ass}
y_{\rm \scriptsize ref}\in\, \mathcal{W}^{r,\infty}(\R_{\ge 0},\R^m),\\
\varphi_0\in\, \Phi_r,\;\;
\varphi_1\in \Phi_{r-1},\;\ldots,\;\;
\varphi_{r-1}\in \Phi_{1}.
\end{multline}

\noindent
We stress that $\dot e_0,\ldots,\dot e_{r-2}$ in~\eqref{eq:fun-con} merely serve as short-hand notations and may be resolved in terms of~$e^{(i)}, k_i$ and $\varphi_i$, $i=0,\ldots,r-1$, where $e^{(i)}$ is assumed to be available to the controller. Therefore, the control law may be reformulated accordingly; in the following we determine the funnel controllers explicitly for the cases $r=1$ and $r=2$.
\begin{enumerate}[\hspace{2pt}$r=1$:]
\item[$r=1$:] The control law~\eqref{eq:fun-con} reduces to the ``classical'' funnel controller~\eqref{eq:FC}.
\item[$r=2$:] We obtain the controller
\[
\begin{aligned}
u(t) &=-k_1(t)(\dot{e}(t)+k_0(t)e(t)),\\
k_0(t)&=\frac{1}{1-\varphi_0^2(t)\|e(t)\|^2},\\
k_1(t)&=\frac{1}{1-\varphi_1^2(t)\|\dot{e}(t)+k_0(t)e(t)\|^2}.
\end{aligned}
\]
We stress that this controller is different from both the relative degree two funnel controller~\eqref{eq:PDfunnel} and its modification~\eqref{eq:PDfunnel-mod}.
\end{enumerate}

Feasibility of the control~\eqref{eq:fun-con} is shown in~\cite[Thm.~3.1]{BergLe18}.
We emphasize that, compared to the bang-bang funnel controller (which is  another antecedent approach discussed in detail in  Section~\ref{Ssec:bang-bang}) and the relative degree two funnel controller~\eqref{eq:PDfunnel}, the funnel functions $\varphi_0,\ldots,\varphi_{r-1}$ in the controller~\eqref{eq:fun-con} do not have to satisfy any compatibility condition. However, the control design~\eqref{eq:fun-con} involves successive derivatives of the auxiliary error variables~$e_i$, which exhibit an increasing complexity for higher relative degree, which is also illustrated by the explicit control law for the cases~$r=2$ and~$r=3$ presented above. The simple funnel control design~\eqref{eq:FC-} helps to resolve these issues.
\subsubsection{Prescribed performance control}\label{Sssec:PPC}
\noindent
An alternative approach to funnel control has been developed by \textit{Bechlioulis and Rovithakis (2008)}~\cite{BechRovi08}, which is called \textit{prescribed performance control}. In the first contributions,
feedback linearizable systems~\cite{BechRovi08}, strict feedback systems~\cite{BechRovi09} and general multi-input, multi-output systems which are affine in the control~\cite{BechRovi10} have been considered. An extension to systems with dead-zone input and time-delays is presented by \textit{Na (2013)} in~\cite{Na13} and further explored by \textit{Theodorakopoulos and Rovithakis (2015)} in~\cite{TheoRovi15}. Using so called \textit{performance functions}, which are special funnel boundaries, and a transformation that incorporates these performance functions, the original controlled system is transformed into a new one for which boundedness of the states, via the prescribed performance control input, can be proved. Therefore, the tracking error evolves in the funnel defined by the performance functions.

However, strictly speaking the controllers presented in~\cite{BechRovi08,BechRovi09,BechRovi10} are no funnel controllers since they are not of high-gain type. They have in common that neural networks are used to approximate the unknown nonlinearities of the system, which contrasts the classical funnel control methodology where parameter estimators
are not used. Problems of the approximation may be that disturbances or small errors in the approximation cause the tracking error to leave the performance funnel. Although a certain level of robustness is ensured, the controllers are not inherently robust since they are not of high-gain type. Furthermore, the controllers are prone to common challenges for approximation-based control schemes, both with the design and implementation, in particular the selection of the size of the neural network and the number of network parameters as well as the high order of the dynamics of the resulting controller because of the neural weight adaptive laws. Moreover, some parameters of the neural network must be chosen large enough, but it is not known a priori how large and suitable values must be identified by several simulations.

These drawbacks have been resolved by \textit{Bechlioulis and Rovithakis (2011)}~\cite{BechRovi11}, where the neural networks are avoided in the control design for single-input, single-output strict feedback systems. However, the controller is dynamic and incorporates~$r$ differential equations, where~$r$ is the relative degree of the system; this is due to the compensation of possibly unknown control directions and the controller is static in case of known directions. The dynamic component can be viewed as a filter, and it is needed in addition to the derivatives of the output. Finally, this filter is avoided in \textit{Bechlioulis and Rovithakis (2014)}~\cite{BechRovi14}, as the control directions are assumed to be known, and the complexity of the controller is further reduced; also, a feature of this controller is that no derivatives of the reference signal are needed. The class of systems considered in~\cite{BechRovi14} are so called pure feedback systems, which are of the form
\begin{equation}\label{eq:nonl-NF}
\begin{aligned}
  \dot x_k(t) &= f_k\big(x_1(t),\ldots, x_{k+1}(t)\big),~~k=1,\ldots,r-1,\\[1ex]
  \dot x_r(t) &= f_r\big(d(t), x_1(t),\ldots, x_r(t), \eta(t),u(t)\big),\\[1ex]
  \dot \eta(t) &= g\big(d(t), x_1(t),\ldots, x_r(t),\eta (t)\big),\\[1ex]
  y(t) &= x_1(t)
  \end{aligned}
\end{equation}
and initial data
\begin{multline}\label{eq:nonl-NFIc}
\big(x_1(0),\cdots,x_r(0),\eta(0)\big)=(x_1^0,\cdots,x_r^0,\eta^0\big)\\
\in\R^m\times \cdots\times \R^m\times\R^q.
\end{multline}
The considerations in~\cite{BechRovi14} are restricted to the case of no disturbances ($d=0$) and trivial internal dynamics ($q=0$); further, the partial derivatives $\frac{\partial f_i}{\partial x_{i+1}}$ and $\frac{\partial f_r}{\partial u}$ are assumed to be uniformly positive definite. We stress that in this system class no internal dynamics and no uncertainties or disturbances are allowed; the influence of the latter is discussed in~\cite{TheoRovi16}. Compared to~\cite{BechRovi14}, in the system class considered in~\cite{BechRovi11} internal dynamics of a certain hierarchical structure are allowed; these dynamics are called ``dynamics uncertainty'' there.

The \textit{prescribed performance controller} for the above described system class as introduced in~\cite{BechRovi14} is of the following form: First, a performance function~$\rho$ is chosen, which is usually of the form
\[
    \rho(t) = (\rho_0-\rho_\infty) e^{-\ell t} + \rho_\infty,\quad t\ge 0,
\]
where $\rho_0>\rho_\infty>0$, $\ell > 0$.
Clearly, $\varphi(t):=\rho(t)^{-1}$ defines a finite performance funnel with $\varphi\in\Phi$ for $\Phi$ as in~\eqref{Phi}. For $i=1,\ldots,r$ choose performance functions~$\rho_i(t) = \varphi_i(t)^{-1}$ and constants~$k_i>0$ and let
\begin{multline*}
    T_f:(-1,1)^m\to\R^m,\ (s_1,\ldots,s_m) \\
    \mapsto \left(\ln\left(\frac{1+s_1}{1-s_1}\right),\ldots, \ln\left(\frac{1+s_m}{1-s_m}\right)\right);
\end{multline*}
other choices for $T_f$ are possible (as long as it is continuously differentiable and bijective), but the above function is the standard choice in the literature. The prescribed performance controller is then given by
\begin{equation}\label{eq:PPC}
\boxed{\begin{aligned}
a_1(t)&=-k_1 T_f\Big(\varphi_1(t) \big(x_1(t) - y_{\rm ref}(t)\big)\Big),\\
a_2(t)&=-k_2 T_f\Big(\varphi_2(t) \big(x_2(t) - a_1(t)\big)\Big),\\
&\ \vdots \\
a_r(t)&=-k_r T_f\Big(\varphi_r(t) \big(x_r(t) - a_{r-1}(t)\big)\Big),\\
 u(t) &= a_r(t),
\end{aligned}
}
\end{equation}
where the performance functions must be such that for all $j=1,\ldots,m$ and $i=2,\ldots,r$ we have $\varphi_1(0)|x_{1,j}(0) - y_{{\rm ref},j}(0)|<1$ and $\varphi_i(0)|x_{i,j}(0) - a_{i-1,j}(0)|<1$.

It is shown in~\cite[Thm.~2]{BechRovi14} that the controller~\eqref{eq:PPC} applied to a system~\eqref{eq:nonl-NF} satisfying the conditions mentioned above, leads to a closed-loop system which has a solution and every maximal solution is global and bounded. Furthermore, each component $e_i(t) = x_{1,i}(t) - y_{{\rm ref},i}(t)$ of the tracking error evolves in the performance funnel $\cF_{\varphi_1}$, with  $\varphi_1(t)=\rho_1(t)^{-1}$.

Although funnel control and prescribed performance control achieve the same control objective and look similar in their controller structure, the two system classes~\eqref{eq:nonlSys} (amenable to funnel control) and~\eqref{eq:nonl-NF} (amenable to prescribed performance control) are different and a thorough comparison of the two approaches is still missing.

\subsubsection{Other approaches}\label{Sssec:FC-other}
\noindent
Further specialized approaches to funnel control include, but are not limited to:\footnote{We thank the anonymous reviewers for pointing out most of these references to us.} adaptive fuzzy funnel control~\cite{LiuWang21,LiuWang17}, fault-tolerant funnel control~\cite{Berg21,ZhanDing25}, neural-network-based funnel control~\cite{HanLee14,WangYu20}, observer-based funnel control~\cite{ChengRen22,ChowKhal19}, PI funnel control~\cite{Hack13,ZhanChai24}, and transition process-based funnel control~\cite{ZhanRen24}.

\subsection{Non-derivative feedback  via two methodologies: filtering and pre-compensation}\label{Ssec:non-deriv-fb}
\noindent
\noindent
Now we turn attention to the second scenario wherein derivative information on the output and reference signal are not available to the
controller, e.g.\ due to issues of accuracy and sensitivity to ``noise". In this scenario, a dynamic component (which we will label either a filter or a pre-compensator\footnote{We use these
terms loosely: they are intended to indicate a rationale that seeks
 to compensate for
the unavailability of output derivatives through (dynamic) operations on available input and output signals.  The terms ``filter"  and
 ``pre-compensator" are adopted solely to distinguish the two distinct methodologies.}),
  operating on available system input and output error data,
 is incorporated in the  control design in order to generate a vector of ``surrogate" variables $\boldsymbol{\xi}$ which deputises for the
 (unavailable) output derivatives in some appropriate sense, and which is used in a feedback
 $u(t)=U(t,e(t),\boldsymbol{\xi}(t))$ based only the available instantaneous information $(t,e(t),\boldsymbol{\xi}(t))$.
 We illustrate the  main features by means of a simple example.
\begin{figure}[h!]
  \centering
  \resizebox{\columnwidth}{!}{
  \begin{tikzpicture}[very thick,node distance = 12ex, box/.style={fill=white,rectangle, draw=black}, blackdot/.style={inner sep = 0, minimum size=3pt,shape=circle,fill,draw=black},plus/.style={fill=white,circle,inner sep = 0, minimum size=5pt,thick,draw},metabox/.style={inner sep = 3ex,rectangle,draw,dotted,fill=gray!20!white}]

\node (box0)		[box,left of=box1,minimum size=8ex,xshift=-24ex]{$u(t)=U(t,e(t),\boldsymbol{\xi}(t))$};
  \node (box1)		[box,minimum size=8ex]{$\begin{array}{c}\text{System:}\\\text{input $u$, output $y$,}\\\text{output error $e=y-y_{\text{ref}}$}\end{array}$};
    \node (box3)		[box,below of = box1,minimum size=8ex,yshift=-4ex]{$\begin{array}{c}\text{Dynamic control}\\\text{component}\end{array}$};
  \node (l1)   [above of = box0, minimum size=0pt, inner sep = 0pt, xshift=0ex, yshift=-4ex] {};

  \node (r3)   [below of = box0, minimum size=0pt, inner sep = 0pt, xshift=0ex, yshift=-4ex] {};
  \node (l2) [right of =  box1, minimum size=0pt, inner sep = 0pt, xshift=12ex, yshift=-0ex] {};

\draw(box3.west) -- (r3.west) node[midway,above]{$\text{surrogate}$} node[midway,below]{$\boldsymbol{\xi}(t)$};
\draw[->](r3.south) -- (box0.south);
\draw[->](box1) -- (box3) node[midway,right]{$(u(t),e(t))$};
 \draw (box1) |-(l1.east) node[pos=0.75,above]{$e(t)$};
  \draw[->](l1.north) -- (box0);
\draw[->](box1) -- (l2) node[right]{$y(t)$};
  \draw[->](box0) -- (box1) node[midway,above]{$u(t)$};
  \end{tikzpicture}
}
\caption{General structure.}
\vspace{-2mm}
\label{Fig:intercon-1}
\end{figure}
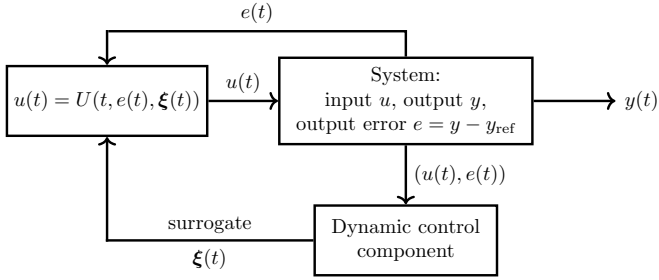

\subsubsection{Motivating example: the double integrator}\label{Sssec:LinSISO-RD2}
\noindent
For purposes of illustration, consider the simplest scalar system of relative degree two:
\begin{equation}\label{dint}
\ddot y(t)=g \, u(t),~~g > 0,~~ (y(0),\dot y(0))=(y^0,v^0)\in\R^2
\end{equation}
and, for  ease of exposition, assume that~$y_{\mathrm{ref}}=0$.   Assume furthermore that the funnel parameter~$\varphi$ is of
class $\Phi\cap\cW^{1,\infty}(\R_{\ge 0},\R)$ with~$\varphi(0)=0$.   As before, let $\alpha\in C^1 ([0,1),[1,\infty))$ be a bijection and define
$\gamma$ as in~\eqref{eq:fcts-FC2}. By Theorem~\ref{Thm:FunCon-Nonl} and Remark~\ref{Rem:N(s)=-s},  we know that the feedback control
\[
u(t)=-\gamma\big(\varphi (t)\dot y(t)+\gamma (\varphi(t)y(t))\big)
\]
ensures that the maximal solution (unique by standard arguments) of \eqref{dint} is global,  bounded and~$y$ evolves in the prescribed performance
funnel~$\cF_\varphi$.   However, this result assumes availability of the ``velocity''~$\dot y(t)$ for feedback.
But what if the velocity is inaccessible?  We highlight two approaches which address this question.

\paragraph{{\bf I. Filtering}}
Augment the double integrator with a ``filter'' driven by $u$:
\[
 \dot\xi(t)=-\xi(t)+u(t),~~
 \xi(0)=0.
\]
Solely for simplicity of exposition, we have adopted the filter initial condition $\xi(0)=0$.
Introducing the variable $z(t):=\dot y(t)-y(t)-g\,\xi(t)$,  the augmented system takes the form
\begin{equation}\label{augsys1}
\left.
\begin{array}{ll}
\dot y(t)=y(t)+z(t)+g\,\xi(t),&y(0)=y^0
\\[1ex]
\dot z(t)=-z(t)-y(t),&z(0)=z^0:=v^0-y^0
\\[1ex]
\dot\xi(t)=-\xi(t)+u(t),&\xi(0)=0.
\end{array}
\right\}
\end{equation}
Viewing the first two of the above equations as an independent system -- with input~$\xi$, output~$y$ and initial data $(y(0),z(0))=(y^0,z^0)$ --  we have
\begin{align*}
&\begin{pmatrix}\dot y(t)\\\dot z(t)\end{pmatrix}=A \begin{pmatrix}y(t)\\z(t)\end{pmatrix}+b\,\xi(t),~~
y(t)
=c \begin{pmatrix}y(t)\\z(t)\end{pmatrix},\\
&A :=\begin{bmatrix}~1&~1\\-1&-1\end{bmatrix},~~
b:=\begin{pmatrix}g\\0\end{pmatrix},~~
c:=\begin{bmatrix}1~~0\end{bmatrix}.
\end{align*}
Observe that $ \Gamma = cb  =g \neq 0$ and
\[
\forall\,\lambda\in {\mathbb{C}}_{\ge 0}:\ \det \begin{bmatrix} A -\lambda I & b\\c & 0\end{bmatrix}=  (1+\lambda)g
 \neq 0.
\]
Thus,  this (independently viewed) system is of relative degree~$r=1$ has asymptotically stable zero dynamics ${\mathcal Z}{\mathcal D}( A,b,c)$ and satisfies (SA1)--(SA3). Therefore,
$( A,b,c) \in {\mathcal L}^{1,1}$. In this illustrative context, the operator~$\fT$ given by~\eqref{eq:opT-lin} has the form
\begin{align*}
\fT&\colon \cC (\R_{\ge 0},\R)\to \cC(\R_{\ge 0},\R),~y\mapsto  y-L(y),\\
 L&\colon y\mapsto \left(t\mapsto \textstyle{\int_0^t} e^{-(t-s)}y(s)\dd s\right).
\end{align*}
Defining $d_0\in\cL^\infty (\R_{\ge 0},\R)$ by $d_0(t):= e^{-t}z^0$ writing  $f\colon (\delta,\zeta,v)\mapsto \delta +\zeta+gv$, we have
\begin{equation}\label{eq:ydot-filter}
\dot y(t)= f(d_0(t),(\fT y)(t),\xi(t)),~~y(0)=y^0.
\end{equation}
By Lemma~\ref{Lem:lincl-Nmr}, $(d_0,f,\fT)\in{\mathcal N}^{1,1}$ and so (in view of by Theorem~\ref{Thm:FunCon-Nonl-Rd1}, Remark~\ref{Rem:N(s)=-s}, and setting
$\gamma \colon v\mapsto -\alpha(v^2)v$ with the special choice $\alpha(s) = 1/(1-s)$) the strategy $\xi(t):= \gamma\big(\varphi(t)y(t)\big)$
ensures that the global solution of~\eqref{eq:ydot-filter} is bounded and~$y$ evolves in the performance
funnel~$\cF_\varphi$. However, this observation is predicated on the premise that~$\xi$ is a variable open to choice.  But this is not
the case:~$\xi$ must lie in the solution set of the filter
\[
\mathcal S
:=
 \setd{\xi\in\cA\cC(\R_{\ge 0},\R) }{ \xi=L(u), ~u\in \cL_{\loc}^\infty (\R_{\ge 0},\R)}.
\]
Writing $\theta\colon t\mapsto  (Lu)(t)-\gamma\big(\varphi (t)y(t)\big)$ and $d\colon t\mapsto d_0(t)+g\theta (t)$, system~\eqref{eq:ydot-filter} may be expressed as
\begin{equation}\label{eq:ydot-filter-2}
\dot y(t)=
f(d(t),(\fT y)(t),\gamma(\varphi(t)y(t))),\quad y(0)=y^0.
\end{equation}
Therefore, if $u\in \cL_{\loc}^\infty(\R_{\ge 0},\R)$  can be chosen such that~$\theta$ (and so, $d$) is bounded, then $(d,f,\fT)\in{\mathcal N}^{1,1}$ and, again invoking Theorem~\ref{Thm:FunCon-Nonl} and Remark~\ref{Rem:N(s)=-s},
it follows that every maximal solution is bounded (and so has domain~$\R_{\ge 0}$)
and~$y$ evolves in the performance funnel~$\cF_\varphi$. Consequently, the issue to be addressed is the design of a feedback strategy,
based only on the available instantaneous information triple~$(t,y(t),\xi(t))$, which ensures boundedness of~$\theta$.
This is precisely the issue resolved in the general setting of~\cite{IlchRyan07}  and summarized in Theorem~\ref{method1} below, which, when
applied to the current  illustrative ``double-integrator" setting, establishes the strategy
\[
u(t)=\gamma_2\big(k(t),\varphi(t)y(t),\xi(t)\big),\quad k(t)=\alpha\big(\varphi^2(t)y^2(t)\big)
\]
where
\begin{align*}
&\gamma_2\colon [1,\infty)\times (-1,1)\times\R\to\R,\\
&(\kappa,\eta,\zeta)\mapsto -\kappa\eta -(\eta^2+\kappa^2)\big(\kappa^2(1+|\zeta|)\big)^2\big(\zeta-\gamma_1(\kappa,\eta)\big),
\end{align*}
achieves the performance objective.  In the context of Figure \ref{Fig:intercon-1}, the surrogate ${\boldsymbol{\xi}}(t)$ is simply the filter state $\xi(t)$.

\paragraph{{\bf II. Pre-compensation}}
Augment the double integrator with a ``pre-compensator" driven by the input~$u$ and output~$y$:
\begin{equation}\label{eq:ddoty-FPC}
\left.
\begin{aligned}
 & \dot \xi_1(t) = \xi_2(t) + (q_1 + p_1 k(t)) (y(t) - \xi_1(t)),\\
 & \dot \xi_2(t) = \tilde g\, u(t) + (q_2 + p_2 k(t)) (y(t) - \xi_1(t)),\\
 & (\xi_1(0),\xi_2(0))=(0,0),
  \\
  & k(t) = \frac{1}{1- \big(\varphi_1 (t)(y(t)-\xi_1(t))\big)^2}
\end{aligned}\right\}
\end{equation}
with $\tilde g, q_i, p_i>0$ (design parameters open to choice) and~$\varphi_1:=2\varphi$.  Analogous to the filtering case, solely for simplicity of exposition, we have adopted the
pre-compensator initial condition $(\xi_1(0),\xi_2(0))=(0,0)$.
The above structure resembles a high-gain observer~\cite{EsfaKhal87,KhalSabe87} with time-varying gain function, however they serve a different purpose. In contrast to high-gain observer theory, the variable~$\xi_2$ is not used to approximate the derivative~$\dot y$ of the output.  Instead,~$\xi_1$ serves as a ``surrogate output'' which is close to the true output~$y$ in the sense that the difference~$y(\cdot )-\xi_1(\cdot )$
evolves within a prescribed performance funnel.
The derivative~$\dot \xi_1$ of the surrogate output is known and so is available for control purposes.  Viewed as a system with input~$u$ and output~$\xi_1$
(with $\dot\xi_1$ also available for feedback),
we seek to apply the funnel controller~\eqref{eq:FC-} in the context of the pre-compensated double integrator given by the conjunction of \eqref{dint} and \eqref{eq:ddoty-FPC}.

To ensure feasibility of the above approach, we need to show that the augmented system~\eqref{dint}-\eqref{eq:ddoty-FPC} satisfies the assumptions of
Theorem~\ref{Thm:FunCon-Nonl}.  To this end, we first proceed to show that the augmented system may be expressed in the form \eqref{eq:nonlSys}.
For simplicity of exposition only, choose $\tilde g=q_1=q_2=p_1=1$
(leaving the design parameter~$p_2 >0$ to be determined). Introducing the variables $z_1 := y-\xi_1$, $z_2 := \dot y - g \xi_2$,
we arrive at a representation of the augmented system with input $u$ and output $\xi_1$:
\begin{equation}\label{aug-sys}
\left.
\begin{array}{ll}
  \ddot \xi_1(t) = (1+ p_2 k(t))z_1(t) + \ddt \big((1 + k(t)) z_1(t)\big) +  u(t),\\[1ex]
  \dot z_1(t) =   z_2(t) - g(1 + k(t)) z_1(t) + \left(g-1\right) \dot \xi_1(t),\\[1ex]
  \dot z_2(t) = - g(1 + p_2 k(t)) z_1(t),~~ k(t) = \frac{1}{1- (\varphi_1(t) z_1(t))^2},
  \\[2mm]
  (\xi_1(0),\dot\xi_1(0),z_1(0),z_2(0))=(0,(1+p_2)y^0,y^0,v^0),
\end{array}
\right\}
\end{equation}
Temporarily replacing~$\dot \xi_1$ by arbitrary $\zeta\in\cC(\R_{\ge 0},\R)$, consider the second and third subsystems of \eqref{aug-sys}
as an initial-value problem with input $\zeta$ and underlying domain $\cD := \{(t,\theta)=(t,\theta_1,\theta_2)\in\R_{\ge 0}\times\R^2|~\varphi_1 (t)|\theta_1| <1\}$.
\begin{equation}\label{eq:pre-comp-ivp}
\left.
\begin{array}{l}
\dot z(t)=Qz(t) -\left(\frac{g\ z_1(t)}{1-(\varphi_1 (t)z_1(t))^2}\right){\mathbf p} + (1-g)\begin{pmatrix}1\\1\end{pmatrix}z_1(t)\\
\qquad\quad + (g-1)\begin{pmatrix}1\\0\end{pmatrix}\zeta(t),\\[3ex]
z(t)=\begin{pmatrix}z_1(t)\\z_2(t)\end{pmatrix},~z(0)=z^0=\begin{pmatrix}z_1^0\\z_2^0\end{pmatrix},\\
Q=\begin{bmatrix} -1 & 1\\-1 & 0\end{bmatrix},
~~{\mathbf p}=\begin{pmatrix}1\\p_2\end{pmatrix}
\end{array}
\right\}
\end{equation}
By the standard theory of differential equations this initial-value problem
has, for all  $(z^0,\zeta)\in\R^2\times \cC(\R_{\ge 0},\R)$,  a~unique maximal solution
$z\colon [0,\omega)\to\R^2$ and $\text{graph}(z)\subset \cD$: we write $z(\cdot )=\varrho (\cdot,z^0,\zeta)$.   Moreover, noting that $Q$ is Hurwitz, a straightforward, if tedious, calculation establishes that $\omega=\infty$ (and so $|\varphi_1(t)z_1(t)|<1$ for all $t\ge 0$).    Therefore, we may define the following causal
operator (more precisely, the generic member of a family $\{{\mathbf T}_{z^0}| ~z^0\in\R^2\}$ of operators parameterized by $z^0$: for notational simplicity we
suppress the dependence on $z^0$)
\begin{equation}\label{causal-op}
\left.
\begin{array}{ll}
   {\mathbf T}\colon &\cC (\R_{\ge 0},\R)\to \cL^\infty_{\text{loc}}(\R_{\ge 0},\R^4),\ \zeta\mapsto (z,k,{\zeta}),
    \\[1ex]
   &z(\cdot )=\varrho(\cdot,z^0,\zeta)=(z_1(\cdot),z_2(\cdot)),\\[1ex]
    &k\colon t\mapsto 1/(1-(\varphi_1(t)z_1(t))^2).
\end{array}\right\}
\end{equation}
Defining $f\in \cC(\R^2\times \R^4\times \R,\R)$ by
\begin{align*}
f\colon &(d,\eta,u)=\big((d_1,d_2),(\eta_1,\ldots,\eta_4),u\big)\\
&\mapsto (1+p_2 \eta_3) \eta_1 + 2 \eta_3^2 d_1 \eta_1^2 \big(d_2 \eta_1 + \\
&\qquad d_1 (\eta_2 - g(1+\eta_3)\eta_1+(g-1)\eta_4)\big)\\
    & \qquad + (1+\eta_3) (\eta_2 - g(1+\eta_3)\eta_1+(g-1)\eta_4) + u,
\end{align*}
it is readily verified that \eqref{aug-sys} may be expressed in the form of the functional differential equation
\begin{multline*}
\ddot \xi_1(t)=f(d(t), {\mathbf T}(\dot\xi_1)(t),u(t)),~~ d(t)=(\varphi_1(t),\dot\varphi_1(t)),\\
(\xi(0),\dot \xi(0))=\big(0,\tfrac 43 y^0\big),
\end{multline*}
where ${\mathbf T}$ is the operator, associated with the initial data $z^0=(y^0,v^0)$, given by~\eqref{causal-op}.
Moreover, both $\xi_1(t)$ and its derivative $\dot\xi_1(t)$ are available for feedback.  If it can be shown that the triple~$(d,f,\fT)$ is of class~$\cN^{1,2}$
(and so is amenable to funnel control), then, applying Theorem \ref{Thm:FunCon-Nonl}  in this context and adopting the performance funnel ${\mathcal F}_{\varphi_1}$ with $\varphi_1:=2\varphi$ (recall that ${\mathcal F}_\varphi$
is the performance funnel stipulated {\em ab initio} for the double integrator plant), the control
\[
u(t)=-\gamma\big(\varphi_1 (t)\dot \xi_1(t)+\gamma (\varphi_1(t)\xi_1(t))\big)
\]
ensures that, for some $\varepsilon_1 \in (0,1)$, ~$\varphi_1(t)|\xi_1(t)| \le \eps_1$ for all~$t\geq 0$.   We also know that $\varphi_1(t) |y(t)-\xi_1(t)|=\varphi_1|z_1(t)| < 1$ for all
$t\geq 0$, and so, setting $\varepsilon := \half (1+\varepsilon_1)$, we have
\begin{multline*}
    \varphi(t) |y(t)| =\half\varphi_1|y(t)|\\
    \le \half \left( \varphi_1(t) |y(t)-\xi_1(t)| + \varphi_1(t)|\xi_1(t)|\right)
    <  \eps < 1.
    \end{multline*}
Therefore, the performance objective is achieved by the dynamic component
\begin{align*}
  \dot \xi_1(t) &= \xi_2(t) + (1 +  k(t)) (y(t) - \xi_1(t)),\\
  \dot \xi_2(t) &= u(t) + (1 +{\third} k(t)) (y(t) - \xi_1(t)), \\
  k(t) &= \frac{1}{1- \varphi_1(t)^2 (y(t)-\xi_1(t))^2},\\
 & (\xi_1(0),\xi_2(0)=(0,0),
  \end{align*}
  in conjunction with the feedback
  \begin{multline*}
  u(t)= -\gamma\big(\varphi_1 (t)\big(\xi_2(t) + (1+ k(t)) (y(t) - \xi_1(t))\big)\\
  +\gamma (\varphi_1(t)\xi_1(t))\big)
\end{multline*}
which requires only the available instantaneous information quadruple $(t,y(t),\xi_1(t),\xi_2(t))$.  In the context of Fig.\,\ref{Fig:intercon-1},
we have $\boldsymbol{\xi}(\cdot)=(\xi_1(\cdot),\xi_2(\cdot))$.

What remains at issue is the question: does the triple~$(d,f,\fT)$ belong to the class~$\cN^{1,2}$?   This question is answered in the affirmative if it can be
shown that the operator ${\mathbf T}$ is of class ${\mathbb T}_0^{1,4}$.
This is essentially the issue resolved in the general setting of \cite{Lanz22}  and summarized in Theorem \ref{Thm:LinSys-MP} below.

\subsubsection{Funnel control with filtering}\label{Ssec:backstep}
\noindent
Having highlighted their main ingredients via the simplest of relative-degree-two systems, we now describe the above two methodologies in the broad context of systems~\eqref{eq:Sysmeth}--\eqref{eq:Sysmethic}. First, we consider funnel control with filtering.
Let $N\in{\cC}^r(\R_{\ge 0},\R)$ be surjective (for example, $N\colon \kappa\mapsto \kappa\sin \kappa$ suffices) and
let $\alpha\colon [0,1)\to [1,\infty)$ be a $r$-times continuously differentiable bijection such that $\alpha^\prime = a\circ\alpha$ for some function $a\colon [1,\infty)\to \R_{\ge 0}$
(for example, $\alpha\colon s\mapsto (1-s)^{-\beta}$, $\beta >0$, suffices).   Again, let $\cB$ denote the open unit ball centred at $0$ in $\R^m$.  Define
\begin{align*}
\gamma&\colon \cB\to\R^m,~v\mapsto (N\circ\alpha )\big(\|v\|^2\big)v,\\
 \gamma_1&\colon [1,\infty)\times\cB\to\R^m~, (\kappa,v)\mapsto N(\kappa)v,
\end{align*}
and projections
\[
\pi_i\colon \R^{(r-1)m}\to\R^{im},~~\xi =(\xi_1,\ldots,\xi_{r-1})\mapsto (\xi_1,\ldots,\xi_i)
\]
for $i=1,\ldots,r-1$. Fix $\mu >0$ (a design parameter) and
define $\gamma_i\colon[1,\infty)\times  \cB\times\R^{(i-1)m}\to\R^m$, $i=2,\ldots,r$, by the recursion
\begin{multline}\label{meth1-gammai}
\gamma_i(\kappa,v,\pi_{i-1}\xi):= \gamma_{i-1}(\kappa,v,\pi_{i-2}\xi)
\\
-\Big(a(\kappa)(1+\|\pi_{i-1}\xi\|)\|(D\gamma_{i-1})(\kappa,v,\pi_{i-2}\xi)\|\Big)^2
\\\times (\mu^{2-i}\xi_{i-1}-\gamma_{i-1}(\kappa,v,\pi_{i-2}\xi))
\end{multline}
wherein $D$ denotes the differentiation operator, $D\gamma_{i-1}$ being the Jacobian of $\gamma_{i-1}$ with
\begin{multline*}
\|D\gamma_{i-1}(\cdot,\cdot,\cdot)\|^2=\|\partial_1 \gamma_{i-1}(\cdot,\cdot,\cdot)\|^2\\
+\|\partial_2\gamma_{i-1}(\cdot,\cdot,\cdot)\|^2+\|\partial_3\gamma_{i-1}(\cdot,\cdot,\cdot)\|^2,
\end{multline*}
where $\partial_j$  denotes differentiation with respect to the $j$-th argument.
We adopt the convention $(\kappa,v,\pi_0\xi)\equiv (\kappa,v)$, in other words, the symbol $\pi_0$ is vacuous.  In particular, we record that
$\|D\gamma_1(\kappa,v,\pi_0\xi)\|^2 = N^\prime (\kappa)^2\|v\|^2 + N(\kappa)^2$.

Augment the system~\eqref{eq:Sysmeth} by a linear input ``filter'' of the form
\begin{equation}\label{meth1-filter}
\begin{aligned}
\dot{\xi}_i(t)&=-\mu\xi_i(t)+\xi_{i+1}(t),\quad i=1,\dots,r-2,\\
\dot{\xi}_{r-1}(t)&=-\mu\xi_{r-1}(t)+u(t),
\end{aligned}
\end{equation}
with $\xi_i(t)\in \R^m$ and arbitrary initial data $\xi_i(0)=\xi_i^0\in\R^m$, $i=1,\ldots,r-1$. The augmented system takes the form
\begin{multline}\label{meth1-sys}
\begin{pmatrix}\dot {\mathbf y}(t)\\\dot\xi(t)\end{pmatrix}=\begin{bmatrix}A&0\\0&F\end{bmatrix}\begin{pmatrix}{\mathbf y}(t)\\\xi(t)\end{pmatrix} +\begin{bmatrix} B\\0\end{bmatrix}
\Gamma^{-1}f(d(t),
{\mathbf T}({\mathbf y})(t))\\
+\begin{bmatrix}B\\G\end{bmatrix}u(t),
\end{multline}
with output $\begin{pmatrix}C{\mathbf y}(t)\\\xi(t)\end{pmatrix}$, where
\[
    C\!=\!\big[I, 0,\cdots, 0\big],\ \ {\mathbf y}(t)\!=\!\begin{pmatrix}y(t)\\\dot y(t)\\\vdots\\y^{(r-1)} (t)\end{pmatrix},\ \ \xi(t)\!=\!\begin{pmatrix}\xi_1(t)\\\xi_2(t)\\\vdots\\\xi_{r-1}(t)\end{pmatrix},
\]
\begin{multline*}
A=\begin{bmatrix}0&I&\cdots&0\\\vdots&\vdots&\ddots&\vdots\\0&0&\cdots&I\\R_1&R_2&\cdots&R_r\end{bmatrix},\quad B=\begin{bmatrix}0\\\vdots\\0\\\Gamma\end{bmatrix},\\
F=\begin{bmatrix}-\mu I&I&\cdots &0\\\vdots&\vdots&\ddots&\vdots\\
0&0&\cdots&I\\0&0&\cdots &-\mu I\end{bmatrix},~~{\text{and}}~~G=\begin{bmatrix}0\\\vdots\\0\\I\end{bmatrix}.
\end{multline*}
Let $y_{\mathrm{ref}}\in \cW^{r,\infty}(\R_{\ge 0},\R^m)$ be arbitrary and write $e(\cdot)=y(\cdot)-y_{\mathrm{ref}}(\cdot)$.
We introduce the control
\begin{equation}\label{meth1-con}
u(t)=\gamma_r(k(t),\varphi(t)e(t),\xi(t)),~~
k(t)=\alpha\big(\varphi^2(t)\|e(t)\|^2\big),
\end{equation}
which will ensure  attainment of the performance objectives of boundedness of all signals and evolution of the tracking error in the performance funnel.

 Note that,  if we set  $r=1$ in \eqref{meth1-con}, then
\begin{multline*}
u(t)=\gamma_1(k(t),\varphi(t)e(t))\\
=\gamma (\varphi(t)e(t))=(N\circ\alpha)(\varphi(t)^2\|e(t)\|^2)\varphi(t)e(t)
\end{multline*}
and so, as is to be expected, we recover the (non-dynamic) controller \eqref{FcRd1}.   In the case of relative degree $r=2$
and $\mu=1$, we have the dynamic controller
\begin{align*}
&\dot \xi(t) = - \xi(t) + u(t),\\
&u(t)=\gamma(\varphi(t)e(t))-\big(a(k(t))\,(1+\|\xi(t)\|)\big)^2 \\
&\cdot\Big(\big(N^\prime (k(t))\varphi(t)\|e(t)\|\big)^2\! +\!N(k(t))^2\Big)\big(\xi(t)\!-\!\gamma(\varphi(t)e(t))\big),
\end{align*}
with $k(t):=\alpha(\varphi^2(t)\|e(t)\|^2)$.

In the general case $r\geq 2$, the efficacy of the control~\eqref{meth1-con} was established in~\cite{IlchRyan07}. We restate this result here, tailored to the present framework.

\begin{thm}\label{method1}
Consider the initial-value problem~\eqref{eq:Sysmeth}--\eqref{eq:Sysmethic}. Choose $(\alpha,N,\varphi)$ such that $\varphi\in\Phi$, $N\in\cC^r(\R_{\ge 0},\R)$ is surjective,   and
$\alpha\in \cC^r([0,1),[1,\infty))$ is bijective with $\alpha^\prime = a\circ \alpha$ for some function $a\colon [1,\infty)\to \R_{\ge 0}$. Let $y_{\mathrm{ref}}\in \cW^{r,\infty}(\R_{\ge 0},\R^m)$ be such that $\varphi(0)\|y(0)-y_{\rm ref}(0)\| <1$
(trivially satisfied if $\varphi(0)=0$).
Then the control~\eqref{meth1-con} applied to the augmented system~\eqref{meth1-sys}, with initial data given by~\eqref{eq:Sysmethic} and
the initial condition $\xi(s)=\xi^0\in\R^{(r-1)m}$ for all $s\in [-h,0]$,  yields an initial-value problem which has a solution (in the sense of Carath\'{e}odory), every solution can be maximally extended and every maximal solution
$({\mathbf y},\xi)\colon \left[-h,\omega\right)\rightarrow \R^{rm} \times \R^{(r-1)m}$ has the properties:
\begin{enumerate}[(i)]
\item
$\omega=\infty$ (global existence);
\item
$u\in\cL^\infty (\R_{\ge0},\R^m)$, $\xi\in\cL^\infty (\R_{\ge0},\R^{(r-1)m})$,  $y\in\cW^{r,\infty}([-h,\infty),\R^m)$ where $y=C{\mathbf y}$;
\item
the tracking error $e=y-y_{\mathrm{ref}}\colon \R_{\ge 0}\to \R^m$ evolves in the funnel~$\mathcal{F}_{\varphi}$  and there exists $\varepsilon \in (0,1)$ such that
$\varphi(t)\|e(t)\|\leq \varepsilon$ for all $t\geq 0$.
\end{enumerate}
\end{thm}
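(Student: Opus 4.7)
I would regard Theorem~\ref{method1} as a backstepping extension of Theorem~\ref{Thm:FunCon-Nonl-Rd1}: the input filter \eqref{meth1-filter} is a cascade of $r-1$ stable first-order blocks whose states $\xi_1,\ldots,\xi_{r-1}$ are to be driven so that $\xi_1$ ``tracks'' the ideal relative-degree-one funnel input $\gamma_1(k,\varphi e)=N(k)\varphi e$, while each subsequent $\xi_{i+1}$ tracks the virtual control $\gamma_{i+1}$ constructed for the preceding layer; the actual control law $u=\gamma_r$ closes the cascade at the top. The proof then has three movements: local existence on the funnel, a backstepping Lyapunov estimate, and a contradiction argument against finite escape/funnel contact.

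\emph{Step 1 (Local existence on the funnel).} Formulate the closed loop with initial data \eqref{eq:Sysmethic} as a functional differential initial-value problem on the relatively open set
\[
\mathcal D\,:=\,\setd{(t,{\mathbf y},\xi)\in\R_{\ge 0}\times\R^{rm}\times\R^{(r-1)m}}{\varphi(t)\|C{\mathbf y}-y_{\rm ref}(t)\|<1}.
\]
Property~(TP2) of $\mathbf{T}\in{\mathbb T}^{rm,q}_h$, continuity of $f$, and the smoothness hypotheses on $\alpha$, $N$, $\varphi$ imply continuity of $\gamma_r$ on its domain, so standard Carathéodory theory for retarded functional differential equations (cf. \cite{Walt98}) supplies a maximal solution $({\mathbf y},\xi)\colon[-h,\omega)\to\R^{rm}\times\R^{(r-1)m}$ whose graph lies in $\mathcal D$; the admissibility condition $\varphi(0)\|e(0)\|<1$ places the initial data inside $\mathcal D$.

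\emph{Step 2 (Backstepping Lyapunov estimate).} Introduce error coordinates
\[
e_0(t):=\varphi(t)e(t),\qquad e_i(t):=\mu^{1-i}\xi_i(t)-\gamma_i\big(k(t),e_0(t),\pi_{i-1}\xi(t)\big),\ \ i=1,\ldots,r-1,
\]
with $k(t):=\alpha(\|e_0(t)\|^2)$, and form the candidate $V(t):=\tfrac12\|e_0(t)\|^2+\tfrac12\sum_{i=1}^{r-1}\|e_i(t)\|^2$. A chain-rule computation of $\dot V$ along solutions generates cross terms driven by $\dot k$, $\dot e_0$ and $\dot\xi_j$, all collected in the Jacobian $D\gamma_{i-1}$. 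The recursion \eqref{meth1-gammai}---specifically the squared coefficient $(a(k)(1+\|\pi_{i-1}\xi\|)\|D\gamma_{i-1}\|)^2$---is designed so that every such cross term is pointwise dominated, yielding, after summation, an inequality of the form
\[
\dot V(t)\,\le\, \kappa(t)\,\big\langle e_0(t),\Gamma N(k(t))e_0(t)\big\rangle - c\sum_{i=1}^{r-1}\|e_i(t)\|^2 + \Xi(t),
\]
with $\kappa(t)>0$ bounded away from zero and $\Xi$ essentially bounded. The boundedness of $\Xi$ follows from boundedness of $d$, of $y_{\rm ref},\dots,y_{\rm ref}^{(r)}$, and of $\mathbf{T}(y,\dot y,\ldots,y^{(r-1)})$---the last via (TP3$'$), which applies as soon as $y$ is forced by the funnel into a bounded tube around $y_{\rm ref}$.

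\emph{Step 3 (Boundedness, global existence, and main obstacle).} Sign-definiteness of $\Gamma$ together with surjectivity of $N$ allows the standard Nussbaum-style dichotomy: if $k$ were unbounded on $[0,\omega)$, then the polarity-switching of $N(k(t))$ would force $V$ upward on subintervals of unboundedly large $k$, contradicting the structure of the inequality above. Therefore $k$ is bounded on $[0,\omega)$, whence there exists $\varepsilon\in(0,1)$ with $\varphi(t)\|e(t)\|\le\varepsilon$ for all $t\in[0,\omega)$; the same inequality then forces each $e_i$ to be bounded, and hence $\xi$ and $u=\gamma_r(k,e_0,\xi)$ are bounded. The closure of $\mathrm{graph}\,({\mathbf y},\xi)|_{[0,\omega)}$ is then a compact subset of $\mathcal D$, contradicting maximality unless $\omega=\infty$. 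Items (i)--(iii) follow. The technical heart---and main obstacle---is the inductive algebra of Step 2: verifying that the cross terms produced by $\ddt\gamma_{i-1}(k,e_0,\pi_{i-2}\xi)$ are precisely those absorbed by the quadratic correction added in \eqref{meth1-gammai}, and coupling this to the final layer where the unknown sign of $\Gamma$ is compensated by the surjective function $N$ without breaking the Lyapunov decrement earned at the earlier layers.
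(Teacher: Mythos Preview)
The paper does not supply a proof of Theorem~\ref{method1}; it simply restates the result of \cite{IlchRyan07} in the present framework. Your backstepping-plus-contradiction plan is in the right spirit and, with one missing ingredient, would reproduce that argument.

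The gap lies at the interface between your Step~2 and the rest: you claim the differential inequality contains a term $\kappa(t)\langle e_0,\Gamma N(k)e_0\rangle$, but this term has no obvious provenance. Differentiating $\tfrac12\|e_0\|^2$ gives $\langle e_0,\dot\varphi e+\varphi(\dot y-\dot y_{\rm ref})\rangle$, and $\dot y$ is merely the second component of $\mathbf y$---neither $u$ nor any filter state appears. What bridges this gap in \cite{IlchRyan07}, and what the paper spells out for the double integrator in Section~\ref{Sssec:LinSISO-RD2}\,I, is a preliminary state-space transformation: one introduces auxiliary variables (of the flavour $z=\dot y-y-g\xi$ in that example) so that the augmented plant-plus-filter system becomes a relative-degree-one system in which $\xi_1$ plays the role of a virtual input and the new $z$-variables form Hurwitz internal dynamics. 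Only after this change of coordinates does one get $\dot y=(\text{stable terms})+\Gamma\,\mu^{r-1}\xi_1$, whence $\langle e_0,\dot e_0\rangle$ contains $\varphi\,\mu^{r-1}\langle e_0,\Gamma\xi_1\rangle=\varphi\,\mu^{r-1}N(k)\langle e_0,\Gamma e_0\rangle+\varphi\,\mu^{r-1}\langle e_0,\Gamma e_1\rangle$ upon substituting $\xi_1=\gamma_1+e_1$. Your error coordinates $e_i=\mu^{1-i}\xi_i-\gamma_i$ and the quadratic corrections in \eqref{meth1-gammai} then absorb the successive cross terms exactly as you describe; but the transformation must come first, and the invocation of (TP3$'$) must be coupled with an a~priori bound on the $z$-variables coming from the Hurwitz structure of the transformed internal dynamics, not solely from the funnel bound on $y$. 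With that piece inserted, your Steps~2--3 go through, and the contradiction in Step~3 is driven by surjectivity of $N$ and sign-definiteness of $\Gamma$ as you indicate (though the correct mechanism is that on suitable subintervals $N(k)\langle e_0,\Gamma e_0\rangle$ becomes arbitrarily negative, forcing $\|e_0\|$ \emph{down}---not $V$ up---contradicting $\|e_0\|\to 1$).
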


\begin{rem}\label{rem:disc-filter}
The recursive procedure in \eqref{meth1-gammai}~-- generating the feedback function $\gamma_r$ in the control \eqref{meth1-con}~-- is a form of backward induction
structurally reminiscent of the “back-stepping” procedure developed in the 1990s by Kotokovic and others~\cite{Koko92,LozaBrog92}
in a different context of feedback stabilization of nonlinear systems.
Such procedures risk falling victim to the ``curse of dimensionality", a phrase coined by Bellman \cite[Preface]{Bell57} in the development of Dynamic Programming,
and indeed \eqref{meth1-gammai} is not exempt from this risk.  The
``curse'' refers to adverse features  that arise with increasing dimension.   In the present setting, dimension equates to relative degree $r$.
For example, set $\alpha\colon s\mapsto (1-s)^{-1}$ and consider the case wherein~$\Gamma$ is known to be positive definite (and so $N\colon \kappa\mapsto -\kappa$ may be chosen).
As before, write $k(\cdot)=\alpha(\varphi^2(\cdot )\|e(\cdot)\|^2)$, which, if $r=1$,  enters as a simple multiplier or gain in the feedback control, viz.
$u(t)=-k(t)\varphi(t)e(t)$.   However, for $r\ge 2$, the recursive procedure in \eqref{meth1-gammai} generates multipliers (embedded in the feedback control) of the form $k(t)^p$, the exponent $p$ of which may become impractically
large even for moderately low values of~$r$.   Funnel control
with pre-compensation (detailed below) seeks to circumvent this drawback, but not without paying a cost:
as shall be seen, the dynamic order of the pre-compensator is $r(r-1)$, whereas the dynamic order of the filter is $r-1$.
\end{rem}
\subsubsection{Funnel control with pre-compensation}\label{Ssec:precomp}
\noindent
In this section we describe  a recent approach to funnel control with non-derivative feedback which avoids the  backstepping procedure. A straightforward idea to do this was the use of a high-gain observer; see the classical works~\cite{EsfaKhal87,KhalSabe87,SabeSann90,Torn88} and the survey~\cite{KhalPral14}. One advantage of high-gain observers is that they can be used to estimate the system states without knowing the exact parameters (in contrast to observer synthesis, see e.g.~\cite{ChoRaja97,EmelKoro04} and the references therein); only some structural assumptions, such as a known relative degree, are necessary. Furthermore, they are robust with respect to input noise. The drawback is that in most cases it is not known a priori how large the high-gain parameter~$k$ in the observer must be chosen and appropriate values must be identified by offline simulations. If~$k$ is chosen unnecessarily large, the sensitivity to measurement noise increases dramatically. High-gain observers with time-varying gain functions~$k(\cdot)$ and corresponding adaptation laws are proposed in~\cite{BullIlch98,SanfPral11}. However, they are not able to influence the transient behaviour of the observation error.

The combination of the adaptive high-gain observer from~\cite{BullIlch98} with a $\lambda$-tracker has been successfully developed by
\textit{Bullinger and Allg\"ower} (2005)~\cite{BullAllg05}. In the recent  paper by \textit{Chowdhury and Khalil} (2019)~\cite{ChowKhal19} the funnel controller from~\cite{IlchRyan02b} is combined with a high-gain observer (for a similar result on prescribed performance control, discussed in Section~\ref{Sssec:PPC}, see~\cite{DimaBech20}). For
single-input, single-output  systems with higher relative degree a virtual (weighted) output is defined such that the system has relative degree one with respect to this virtual output. Then funnel control is feasible and it is shown that (ignoring the additional use of a high-gain observer) for sufficiently small weighting parameter in the virtual output, the original tracking error evolves in a prescribed performance funnel. However, tuning of the weighting parameter has to be done a posteriori and hence depends on the system parameters and the chosen reference trajectory. Therefore, this approach is not model-free like standard funnel control approaches and the controller is not robust, since small perturbations of the reference signal may cause the tracking error to leave the performance funnel.

\noindent
\textit{Berger and Reis} (2018)~\cite{BergReis18a} presented a controller which uses only dynamic output feedback (and no derivatives of the output), avoids the backstepping procedure, and guarantees evolution of the tracking error within a prescribed performance funnel for the class of linear systems with relative degree two.
 This controller is based on the combination of the relative degree two funnel controller~\eqref{eq:PDfunnel} with a funnel pre-compensator~\eqref{eq:ddoty-FPC}. The funnel pre-compensator for systems with arbitrary degree was developed in~\cite{BergReis18b}. Combinations of the funnel pre-compensator with the funnel controller~\eqref{eq:fun-con} are discussed in~\cite{BergOtto19} with applications to underactuated multibody systems.
The general funnel pre-compensator, with $\R^{rm}$-valued state $(\xi_1(\cdot),\cdots,\xi_r(\cdot))$, is defined as follows:
\begin{equation}\label{eq:fun-precomp}
\boxed{
\begin{array}{l}
      \dot \xi_i(t) = \xi_{i+1}(t) + \big(q_i + p_i k(t)\big)(y(t) - \xi_1(t)),\\[1ex]
      \qquad\qquad\qquad\qquad\qquad\qquad i=1,\ldots,r-1,\\[1ex]
      \dot \xi_{r-1}(t) = \xi_r(t) +\big(q_{r-1} + p_{r-1} k(t)\big) (y(t) - \xi_1(t)),\hspace*{-3ex}
      \\[1ex]
      \dot \xi_r(t) =  \big(q_r + p_r k(t)\big)(y(t) - \xi_1(t)) +\;\widetilde \Gamma u(t),
      \\[1ex]
      (\xi_1(0),\ldots,\xi_r(0))=(\xi_1^0,\ldots,\xi_r^0)\in\R^{m}\times\cdots\times\R^m,\hspace*{-3ex}
      \\[3mm]
      k(t) = {\displaystyle\frac{1}{1-\varphi(t)^2 \|y(t)-\xi_1(t)\|^2}},
    \end{array}
    \hspace*{-1ex}
}
\end{equation}
with design parameters $p_i> 0$, $q_i> 0$,  $\widetilde \Gamma\in\Gl_m(\R)$ and funnel function $\varphi\in\Phi$.
We write
\[
\mathbf p = \begin{pmatrix} p_1\\\vdots\\ p_r\end{pmatrix}\quad\text{and}\quad \mathbf q = \begin{pmatrix} q_1\\\vdots\\ q_r\end{pmatrix}.
\]
The adaptation scheme for $k(t)$ in~\eqref{eq:fun-precomp} is non-dynamic and non-monotone, and it guarantees
prescribed transient behaviour of the difference $y(\cdot)-\xi_1(\cdot)$, which we refer to as the {\it compensator error}. Another advantage of the funnel pre-compensator~\eqref{eq:fun-precomp} is that no higher powers of the gain function~$k$ are involved in~\eqref{eq:fun-precomp} (cf.\ the discussion in Remark~\ref{rem:disc-filter}).
 Moreover, the pre-compensator obviates the need for estimates of the underlying model as required in the context of high-gain observers, see~\cite{AstoMarc15,Khal16}.

In contrast to other approaches, the signals~$u$ and~$y$ given to the funnel pre-compensator~\eqref{eq:fun-precomp} are not necessarily the input and output corresponding to some system or plant. We only assume that they are signals belonging to the following set parameterized by $r\in\N$:
\[
\cP_r\!:=\!
\left\{\!\!\!\begin{array}{r}
(u,y)\!\in\!  \cL^\infty_{\loc}(\R_{\ge 0},\R^m)\\
\times \cW^{r,\infty}_{\loc}(\R_{\ge 0},\R^m)\end{array}\!\!\!
\left|
\!\! \begin{array}{l}
y^{(r-1)}\in \cL^\infty(\R_{\ge 0},\R^m),
\\
 y^{(r)}\!-\!\Gamma u\!\in\!\cL^\infty(\R_{\ge 0},\R^m),\\
\Gamma\in \Gl_m(\R)
\end{array}
\right.\!\!\!\!
\right\}
\]
The vector
$\mathbf q =(q_1, \ldots,q_r)^\top$
 is chosen such that the matrix
\begin{equation}\label{eq:Q}
    Q = \begin{bmatrix} -q_1 & 1 &\ldots& 0\\ \vdots &\vdots& \ddots & \vdots\\ -q_{r-1} &0&\ldots& 1\\ -q_r &0&\ldots& 0\end{bmatrix}\in\R^{r\times r}
\end{equation}
(with characteristic polynomial
$s^r + q_r s^{r-1}+\cdots+q_1$) is Hurwitz, i.e., $\sigma(Q)\subset\mathbb{C}_{< 0}$.  Let $R=R^\top\succ0$ and
\[
    P = \begin{bmatrix} P_1 & P_2\\ P_2^\top & P_4\end{bmatrix},\ \ P_1\!\in\!\R,\ P_2\!\in\!\R^{1\times (r-1)},\ P_4\!\in\!\R^{(r-1)\times (r-1)}
\]
be such that
\begin{equation}\label{eq:lyap}
    Q^\top P + PQ + R = 0,\quad P=P^\top\succ0.
\end{equation}
The vector $\mathbf p$ is uniquely determined by $\mathbf q$ and $R$ via the following construction:
\begin{equation}\label{eq:rel-piqi}
   \mathbf p= \begin{pmatrix} p_1\\ \vdots\\ p_r\end{pmatrix} := P^{-1} \begin{pmatrix} P_1 - P_2 P_4^{-1} P_2^\top \\ 0\\ \vdots\\ 0\end{pmatrix} = \begin{pmatrix} 1\\ - P_4^{-1} P_2^\top \end{pmatrix}.
\end{equation}

The pre-compensator~\eqref{eq:fun-precomp} is a nonlinear and time-varying system, yet it is simple in its structure and its dimension depends only on the ``relative degree''~$r$ given by~$\cP_r$. The set~$\cP_r$ of signals~$(u,y)$ ensures error evolution within the funnel.
For a schematic of the construction of the funnel pre-compensator~\eqref{eq:fun-precomp} see also Fig.~\ref{Fig:construction}.

\captionsetup[subfloat]{labelformat=empty}
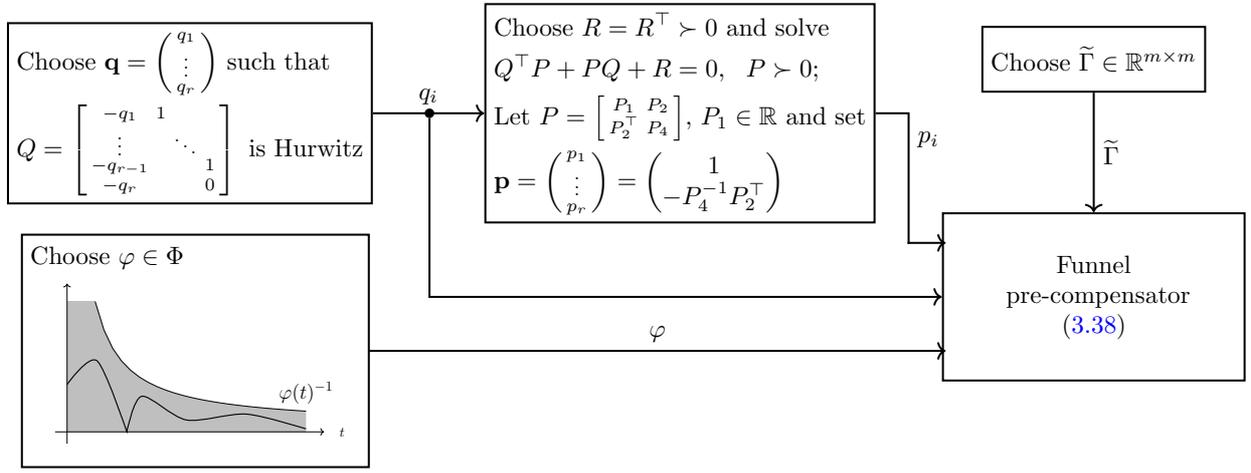
\begin{figure*}[h!t]
  \centering
  \resizebox{0.9\textwidth}{!}{
\begin{tikzpicture}[thick,node distance = 12ex, box/.style={fill=white,rectangle, draw=black}, blackdot/.style={inner sep = 0, minimum size=3pt,shape=circle,fill,draw=black},plus/.style={fill=white,circle,inner sep = 0, minimum size=5pt,thick,draw},metabox/.style={inner sep = 3ex,rectangle,draw,dotted,fill=gray!20!white}]

  \node (box1)	[box, minimum size=6ex]{$\begin{aligned}
    & \text{Choose $\mathbf q =\begin{smallpmatrix}q_1\\\vdots\\q_r\end{smallpmatrix}$ such that}\\ & Q = \begin{smallbmatrix} -q_1 & 1 && \\ \vdots && \ddots & \\ -q_{r-1} &&& 1\\ -q_r &&& 0\end{smallbmatrix}\ \text{is Hurwitz} \end{aligned}$};	

  \node (box2) [box, right of = box1, minimum size=6ex, xshift=33ex]{$\begin{aligned} & \text{Choose $R=R^\top\succ0$ and solve}\\ & Q^\top P + PQ + R = 0,\ \  P\succ0;\\ & \text{Let $P = \begin{smallbmatrix} P_1 & P_2\\ P_2^\top & P_4\end{smallbmatrix}$, $P_1\in\R$ and set}\\ & \mathbf p =\begin{smallpmatrix} p_1\\ \vdots\\ p_r\end{smallpmatrix} =  \begin{pmatrix} 1\\ - P_4^{-1} P_2^\top \end{pmatrix}\end{aligned}$};

 \node (box3) [box, below of = box1, minimum size=6ex, yshift=-10ex, xshift=0.5ex]{$\begin{aligned} &\text{Choose $\varphi\in\Phi$}\\[-5mm] &  \text{\resizebox{0.25\textwidth}{!}{%
	\begin{tikzpicture}[domain=0.001:4,scale=1.5]
						\fill[color=black!25,domain=0.47:4] (0,0)-- plot (\x,{min(2.2,1/\x+2*exp(-3))})--(4,0)-- (0,0);
            \fill[color=black!25,] (0,0) -- (0,2.2) -- (0.47,2.2) -- (0.47,0) -- (0,0);
			\draw[->] (-0.2,0) -- (4.3,0) node[right] {$t$};
			\draw[->] (0,-0.2) -- (0,2.5) node[above] {};
			\draw[color=black,domain=0.47:4] plot (\x,{min(2.2,1/\x+2*exp(-3))}) node[above] {\Large$\varphi(t)^{-1}$};
			\draw[smooth,color=black,style=thick] (0,0.8) node[left] {}
			plot coordinates{(0,0.8)(0.5,1.2)(1,0)}--
			plot coordinates{(1,0)(1.25,0.6)(2,0.2)(3,0.3)(4,0.05)} ;
	\end{tikzpicture}}}\end{aligned}$};

  \node (box4) [box, below of = box2, minimum size=6ex, yshift=-5ex, xshift=38ex]{
  $\begin{array}{c}~\\\text{Funnel}\\\quad~~\text{pre-compensator}\quad~\\\eqref{eq:fun-precomp}\\~\end{array}$
  };

  \node (box5) [box, above of = box4, minimum size=6ex, yshift=10ex]{Choose $\widetilde \Gamma\in\R^{m\times m}$};

  \node (fork)  [blackdot,right of = box1, xshift = 10ex] {};
  \node (p1)  [left of = box4, minimum size=0pt, inner sep = 0pt, yshift = 5ex, xshift=-5ex] {};
  \node (p2)  [left of = box4, minimum size=0pt, inner sep = 0pt, yshift = 5ex, xshift=-1.5ex] {};
  \node (p3)  [left of = box4, minimum size=0pt, inner sep = 0pt, yshift = -5ex, xshift=-1.5ex] {};

  \draw[->](box1) -- (box2) node[midway,above]{$q_i$};
  \draw[->](fork) |- (box4) node[midway,above]{};
  \draw[->](box3) -- (p3) node[midway,above]{$\varphi$};

  \draw[-](box2) -| (p1.south) node[pos=0.6,right]{$p_i$};
  \draw[->](p1.west) -- (p2) node[midway,right]{};

  \draw[->](box5) -- (box4) node[midway,right]{$\widetilde \Gamma$};

  \end{tikzpicture}
    }
\caption{Construction of the funnel pre-compensator~\eqref{eq:fun-precomp} depending on its design parameters; taken from~\cite{BergReis18b}.}
\label{Fig:construction}
\end{figure*}

It is shown in~\cite{BergReis18b} that for signals $(u,y)\in\cP_r$ with $r\ge 2$, the funnel pre-compensator~\eqref{eq:fun-precomp} has
a unique maximal solution $(\xi_1,\ldots,\xi_r)$: moreover, the (absolutely continuous) solution is bounded (and so has interval of existence $\R_{\ge 0}$) and
\begin{equation*}\label{eq:Intro-fun}
    \exists\,\varepsilon>0\ \forall\, t>0:\ \|y(t) - \xi_1(t)\| < \varphi(t)^{-1} - \varepsilon.
\end{equation*}
 Thus, with each admissible quadruple $(\mathbf p,\mathbf q,\widetilde\Gamma,\varphi)$, we may associate a funnel pre-compensator operator
$\text{FP} (\mathbf p,\mathbf q,\widetilde\Gamma,\varphi)\colon\cP_r\to\cL^\infty (\R_{\ge 0},\R^{m})$,
$(u,y)\mapsto \xi_1$
(or, more precisely, a family of such operators parameterized by the initial data: for notational simplicity, we suppress the dependency on this
arbitrary data.)

While the funnel pre-compensator is able to achieve prescribed transient behaviour of the compensator error $e_1=y-\xi_1$, we like to stress that no transient behaviour can be prescribed for the errors $e_i =y^{(i-1)}-\xi_i$ for $i=2,\ldots,r-1$ and $e_r=\widetilde \Gamma  \Gamma^{-1} y^{(r-1)}- \xi_r$, since $\dot y,\ldots, y^{(r-1)}$ are not known. Therefore, the variables $\xi_2,\ldots, \xi_r$ from the funnel pre-compensator cannot be viewed as estimates for the derivatives $\dot y,\ldots, y^{(r-1)}$.
The following construction seeks to circumvent this shortfall. Choose admissible $(\mathbf p^i,\mathbf q^i,\widetilde\Gamma,\varphi_i)$ with $\widetilde\Gamma\in\Gl_m(\R) $ and $\mathbf{p}^i,\mathbf{q}^i\in\R^r$, $\varphi_i\in \Phi_{r}$
(defined as in \eqref{eq:Phir}), $i=1,\ldots,r-1$.
   Consider the  \textit{cascade of $(r-1)$ funnel pre-compensators}
\begin{multline*}
 \text{FP}_{r-1} \circ \text{FP}_{r-2} \circ \ldots \circ \text{FP}_{1}\colon \cP_r\to\cL^\infty(\R_{\ge 0},\R^{m}),\\
 (u,y)\mapsto \xi_{r-1,1}=: z,
\end{multline*}
where $\text{FP}_i:=\text{FP}(\mathbf p^i,\mathbf q^i,\widetilde\Gamma,\varphi_i)$, with implicitly-associated initial data $\bxi_i^0:=(\xi_{i,1}^0,\ldots,\xi_{i,r}^0)\in\R^{m}\times\cdots\times\R^m$.
Thus, for $(u,y)\in\cP_r$ and notationally identifying $\xi_{0,1}$ with $y$, the $\R^{rm}$-valued function
 $\bxi_i:=(\xi_{i,1},\ldots,\xi_{i,r})$, where $\xi_{i,1}=\text{FP}_i(u,\xi_{i-1,1})$ and $i=1,\ldots,r-1$,
 is given by
 \noindent
\begin{equation}\label{eq:casc-fun-obs-i}
\begin{array}{l}
\dot\bxi_i(t)=\tilde A\bxi_i(t)+\big(\big(\mathbf q^i+k_i(t)\mathbf p^i\big)\otimes I_m\big) \big(\xi_{i-1,1}(t)-\xi_{i,1}(t)\big)\\[1ex]
\qquad\quad+\tilde B u(t),
\\[1ex]
\bxi_i(0)=\bxi_i^0,
\\[2ex]
k_i(t)= {\displaystyle\frac{1}{1-\varphi_i(t)^2 \|\xi_{i-1,1}(t)-\xi_{i,1}(t)\|^2}},
\end{array}
\end{equation}
where $\otimes$ is the Kronecker product, with
\[
      \tilde A :=\begin{bmatrix} 0&I_m&0 &\cdots&0\\0&0&I_m&\cdots&0\\
      \vdots&\vdots&&\ddots&\vdots
      \\0&0&0&\cdots&I_m
      \\0&0&0&\cdots&0\end{bmatrix},\quad \tilde B:=\begin{bmatrix} 0\\0\\\vdots\\0\\\widetilde\Gamma\end{bmatrix},
\]
and the cascade output is given by $z(t)=\xi_{r-1,1}(t)$.
The situation is illustrated in Fig.~\ref{Fig:cascade}.  The dynamic order of the cascade is $r(r-1)$.

\captionsetup[subfloat]{labelformat=empty}
\begin{figure}[h!t]
  \centering
  \resizebox{\columnwidth}{!}{
\begin{tikzpicture}[thick,node distance = 12ex, box/.style={fill=white,rectangle, draw=black}, blackdot/.style={inner sep = 0, minimum size=3pt,shape=circle,fill,draw=black},plus/.style={fill=white,circle,inner sep = 0, minimum size=5pt,thick,draw},metabox/.style={inner sep = 3ex,rectangle,draw,dotted,fill=gray!20!white}]

  \node (box1)		[box, minimum size=6ex]{\ $(u,y)\in\cP_r$\ \ };
  \node (box2) [box, right of = box1, minimum size=6ex, xshift=11ex]{$\begin{array}{c} \text{FP}_1\end{array}$};
  \node (box3) [box, right of = box2, minimum size=6ex, xshift=7ex]{$\begin{array}{c} \text{FP}_2\end{array}$};
  \node (P1)  [right of = box3, xshift = 3ex, minimum size=0pt, inner sep = 0pt] {};
  \node (P2)  [right of = P1, xshift = -6ex, minimum size=0pt, inner sep = 0pt] {};
  \node (L1)  [right of = P2, xshift = -2ex, minimum size=0pt, inner sep = 0pt] {};
  \node (L2)  [right of = P2, xshift = -2ex, yshift = -6ex, minimum size=0pt, inner sep = 0pt] {};
  \node (box4) [box, right of = L2, yshift = 3ex, minimum size=9ex, xshift=-6.8ex]{$\begin{array}{c} \text{FP}_{r-1}\end{array}$};
  \node (R1)  [right of = box4, xshift = 5ex, minimum size=0pt, inner sep = 0pt] {};
  \node (P3)  [left of = L2, xshift = -4ex, minimum size=0pt, inner sep = 0pt] {};
  \node (P4)  [right of = P3, xshift = -6ex, minimum size=0pt, inner sep = 0pt] {};
  \node (P5)  [blackdot, below of = box2, xshift = 0ex, yshift=6ex] {};
  \node (P6)  [blackdot, below of = box3, xshift = 0ex, yshift=6ex] {};

  \draw (box1) |- (P5) node[pos=0.8,above]{$u$};
  \draw[->] (P5) -- (P3);
  \draw[dashed] (P3) -- (P4);
  \draw[->] (P4) -- (L2);

  \draw[->] (P5) -- (box2);
  \draw[->] (P6) -- (box3);

  \draw[->] (box1) -- (box2) node[midway,above]{$y$};
  \draw[->] (box2) -- (box3) node[midway,above]{$\xi_{1,1}$};
  \draw[->] (box3) -- (P1) node[midway,above]{$\xi_{2,1}$};
  \draw[dashed] (P1) -- (P2);
  \draw[->] (P2) -- (L1) node[midway,above]{$\xi_{r-2,1}$};
  \draw[->] (box4) -- (R1.east) node[midway,above]{$z=\xi_{r-1,1}$};

  \end{tikzpicture}
  }
\caption{Cascade of funnel pre-compensators~\eqref{eq:casc-fun-obs-i}
applied to signals~$(u,y)\in\cP_r$; taken from~\cite{BergReis18b}.}
\label{Fig:cascade}
\end{figure}
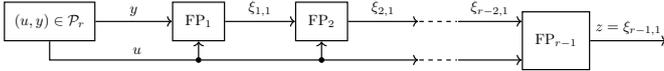

It is shown in~\cite{BergReis18b} that for signals $(u,y)\in\cP_r$ with~$r\ge 2$ such that $y,\dot y, \ldots, y^{(r-1)}$ are bounded, the funnel pre-compensator cascade~\eqref{eq:casc-fun-obs-i}
has bounded (absolutely continuous) solutions
$\bxi_i=(\xi_{i,1},\ldots,\xi_{i,r})$ with bounded gain functions~$k_i$, $i=1,\ldots, r-1$, and
\begin{multline*}\label{eq:casc-funnel-obserr}
   \forall\, i\in\{1,\ldots,{r-1}\}\ \exists\, \varepsilon_i>0\ \forall\, t>0:\\  \| \xi_{i-1,1}(t) - \xi_{i,1}(t)\| < \varphi_i(t)^{-1} - \varepsilon_i,
\end{multline*}
where $\xi_{0,1}\equiv y$. Furthermore,
\begin{equation}\label{eq:obserr-y-z}
    \forall\, t>0:\ \| y(t) -  z(t))\| < \sum_{i=1}^{r-1} \big( \varphi_i(t)^{-1} - \varepsilon_i\big).
\end{equation}

\begin{rem}\label{rem:deriv-z}
The output $z$ of the pre-compensator cascade is $(r-1)$-times continuously differentiable with explicitly-computable
(in terms of available signals) derivatives.  In particular, recursively defining functions~$\Xi_i$, $i=1,\ldots,r-1$, by
\begin{align*}
\Xi_1(t) &:= \big(q^{r-1}_1+p^{r-1}_1 k_{r-1}(t)\big)\big(\xi_{r-2,1}(t)-\xi_{r-1,1}(t)\big),
\\
\Xi_i(t) &:= \big(q^{r-1}_i+p^{r-1}_i k_{r-1}(t)\big)\big(\xi_{r-2,1}(t)-\xi_{r-1,1}(t)\big)\\
&\quad\ +\Xi_{i-1}^\prime (t),
\end{align*}
we have
\[
z^{(i)}(t) = \xi_{r-1, i+1} (t)+\Xi_i (t),\quad i=1,\ldots r-1.
\]
The essence of the pre-compensation approach to funnel control is to feedback the known variables
$z,\dot z,\ldots,z^{(r-1)}$ as surrogates for the output variable $y$ and its unknown derivatives
$\dot y,\ldots,y^{(r-1)}$.  Detailed characterizations of the surrogate variables and their dependencies on available signals
are contained in~\cite{BergReis18b}.
\end{rem}

\paragraph{Application to systems with stable internal dynamics}
We may now turn to the application of the funnel pre-compensator cascade
 in the control of system~\eqref{eq:Sysmeth}--\eqref{eq:Sysmethic}.  In particular, the input-output pair $(u,y)$, associated with the
latter system, is used to drive the cascade, generating the variable $z$.
The resulting augmented system, viewed with input $u$ and output $z$,  is amenable to funnel control as in the context of Theorem~\ref{Thm:FunCon-Nonl}.
The output~$z$  satisfies the relation~\eqref{eq:obserr-y-z}, and its derivatives (up to order $r-1$) are known explicitly as shown in Remark~\ref{rem:deriv-z}.
Thus, the funnel controller~\eqref{eq:FC-} may be applied in order to achieve the tracking objective of prescribed transient behaviour (of the primal
system output $y$) in the absence of knowledge of the derivatives~$y^{(i)}$, $i=1,\ldots,r-1$, cf.\ Fig.~\ref{Fig:intercon-1}.

Since the funnel controller~\eqref{eq:FC-} requires a bounded-input, bounded-output property of the internal dynamics of the system (cf.\ Theorem~\ref{Thm:FunCon-Nonl}; we speak of ``stable internal dynamics'' for brevity) we need to ensure that this property is preserved under interconnection with the funnel pre-compensator cascade. This can be achieved for the generic system~\eqref{eq:Sysmeth}--\eqref{eq:Sysmethic}, as shown in~\cite{BergReis18b} for relative degree two or three and, for arbitrary relative degree, in the recent work~\cite{Lanz22}. In essence, what needs to be established is that the augmented system (the conjunction of~\eqref{eq:Sysmeth} and~\eqref{eq:casc-fun-obs-i} with input~$u$ and output~$z := \xi_{{r-1},1}$) can be
equivalently written as
\begin{equation}\label{eq:Sys-Conj}
    z^{(r)}(t) = F\big( \tilde d(t), \widetilde{\mathbf{T}}(z,\dot z, \ldots, z^{(r-1)})(t)\big) + \widetilde \Gamma u(t),
    \end{equation}
 with initial data
 \begin{equation}\label{eq:Sys-Conjic}
 \left.
  \begin{array}{ll}
   \left.z\right|_{[-h,0]} = z^0\in \cC^{r-1}([-h,0],\R^m),&\text{if}~h >0,
   \\[1ex]
   (z(0),\dot z(0),\ldots,z^{r-1}(0))=(z_1^0,z_2^0,\ldots,z_{r-1}^0),\!\!&\text{if}~h=0,\end{array}\!\!\right\}
   \end{equation}
for some $\tilde d\in\cL^\infty(\R_{\ge 0},\R^r)$, $F\in\cC(\R^r\times\R^{\tilde q},\R^m)$ and an operator $\widetilde{\mathbf{T}}\in {\mathbb T}_h^{rm,\tilde q}$.
The initial data is determined by the initial data on the primal system in conjunction with the initial data on the pre-compensator cascade, the latter being open to choice
and the former being such that $y(0)$ is known. The following result is taken from~\cite{Lanz22}.

\begin{thm}\label{Thm:LinSys-MP}
Consider a system~\eqref{eq:Sysmeth}--\eqref{eq:Sysmethic} and assume that $\Gamma = \Gamma^\top \succ 0$.
Further consider the cascade of funnel pre-compensators
$FP_{r-1} \circ \ldots \circ FP_{1}$ defined by~\eqref{eq:casc-fun-obs-i}
with $\varphi_1\in\Phi_{r}$ and $\varphi_2 = \ldots = \varphi_{r-1} := \rho\, \varphi_1$ for some $\rho>1$.  Choose
pre-compensator initial data such that
\[
     \varphi_1(0) \, \|y^0(0)- \xi_{1,1}^0\| < 1,\quad
       \rho\varphi(0) \, \|\xi_{i-1,1}^0- \xi_{i,1}^0\| < 1
\]
for $i=2,\ldots,r-1$.\footnote{For example, $\xi_{i,1}^0=y^0(0)$, $i=1,\dots,r-1$ suffices.}
Furthermore, let ${\mathbf p}$ and ${\mathbf q}$ be such that \eqref{eq:Q}, \eqref{eq:lyap}, \eqref{eq:rel-piqi} hold and set
$({\mathbf p}^i,{\mathbf q}^i)=({\mathbf p},{\mathbf q})$, $i=1,\ldots,r-1$.
Moreover, assume that $\widetilde\Gamma_i = \widetilde \Gamma\in\R^{m\times m}$, $i=1,\ldots,r-1$, such that $\widetilde \Gamma = \widetilde\Gamma^\top \succ 0$ and $\Gamma \widetilde \Gamma^{-1} = \big(\Gamma \widetilde \Gamma^{-1}\big)^\top \succ 0$. Finally, assume that, $r\ge 3$, then
\begin{equation}\label{eq:ass-gam}
  \|I_m - \Gamma \widetilde \Gamma^{-1}\| < \min\left\{ \frac{\rho-1}{r-2}, \frac{\rho}{4\rho^2 (\rho+1)^{r-2} - 1}\right\}.
\end{equation}
Then the conjunction of~\eqref{eq:Sysmeth} and~\eqref{eq:casc-fun-obs-i} can be equivalently written in the form of a system~\eqref{eq:Sys-Conj} with input~$u$, output~$z := \xi _{{r-1},1}$
 and initial data~\eqref{eq:Sys-Conjic}.  Moreover, for any $u\in\cL^{\infty}_{\loc}(\R_{\ge 0},\R^m)$ it holds that
\[
 \exists\, \eps\in (0,1)\ \forall\, t>0:\
\rho_1\varphi_1(t)\|y(t)-z(t)\|\leq\eps,
\]
where $ \rho_1:=\rho/(\rho+r-2)$.
\end{thm}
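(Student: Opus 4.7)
The plan is to combine the single-stage funnel pre-compensator estimate recalled just before the theorem with an induction along the $(r-1)$ cascade levels, and then to recast the whole coupled system in the desired functional form. I would first invoke property~(TP2) of $\mathbf{T}$ together with the local Lipschitz regularity of the cascade vector fields away from their funnel boundaries to obtain a unique maximal solution $(y,\bxi_1,\ldots,\bxi_{r-1})$ of the coupled initial-value problem~\eqref{eq:Sysmeth}--\eqref{eq:casc-fun-obs-i} on some interval $[0,\omega)$. The hypotheses on $\xi_{i,1}^0$ guarantee $\varphi_1(0)\|y(0)-\xi_{1,1}^0\|<1$ and $\rho\varphi_1(0)\|\xi_{i-1,1}^0-\xi_{i,1}^0\|<1$, $i=2,\ldots,r-1$, so each cascade gain $k_i(0)$ is finite.

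The heart of the argument is an induction on $i\in\{1,\ldots,r-1\}$ establishing $(u,\xi_{i-1,1})\in\cP_r$ with an effective high-frequency gain $\Gamma_i\in\Gl_m(\R)$, where $\xi_{0,1}\equiv y$ and $\Gamma_0=\Gamma$. The base case is immediate from~\eqref{eq:Sysmeth} via~(TP3$'$). For the step $i-1\mapsto i$, successive differentiation of the cascade relations~\eqref{eq:casc-fun-obs-i}, using $\dot\xi_{i,r}=(q_r+p_r k_i)(\xi_{i-1,1}-\xi_{i,1})+\widetilde\Gamma u$, shows that $\xi_{i,1}^{(r)}-\widetilde\Gamma u$ is a bounded functional of $k_i$ and of $\xi_{i-1,1}$ and its derivatives; the pre-compensator property then certifies that $\xi_{i-1,1}-\xi_{i,1}$ evolves strictly inside $\cF_{\rho\varphi_1}$, keeping $k_i$ bounded. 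The consequent membership $(u,\xi_{i,1})\in\cP_r$ with effective gain $\widetilde\Gamma$ then feeds the next stage.

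With $\cP_r$-membership secured level by level, the single-stage estimate yields constants $\varepsilon_i>0$ such that
\[
\varphi_1(t)\|y(t)-\xi_{1,1}(t)\|<1-\varphi_1(t)\varepsilon_1,\quad \rho\varphi_1(t)\|\xi_{i-1,1}(t)-\xi_{i,1}(t)\|<1-\rho\varphi_1(t)\varepsilon_i,\ i\ge 2.
\]
The triangle inequality together with the identity
\[
\frac{1}{\varphi_1(t)}+\frac{r-2}{\rho\varphi_1(t)}=\frac{\rho+r-2}{\rho\,\varphi_1(t)}=\frac{1}{\rho_1\varphi_1(t)}
\]
gives $\rho_1\varphi_1(t)\|y(t)-z(t)\|\le 1-\rho_1\varphi_1(t)\sum_i\varepsilon_i\le\varepsilon$ for a uniform $\varepsilon\in(0,1)$, invoking boundedness of $\varphi_1\in\Phi_r$. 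Since no solution component escapes in finite time, $\omega=\infty$. The representation~\eqref{eq:Sys-Conj} is then obtained by expressing $y^{(k)}$, $k=0,\ldots,r-1$, and all internal cascade variables as causal functionals of $(z,\dot z,\ldots,z^{(r-1)})$ — inverting the cascade starting from $z=\xi_{r-1,1}$ with the aid of the differentiation formulae in Remark~\ref{rem:deriv-z} — and noting that $z^{(r)}$ contains $u$ only through the sign-definite gain $\widetilde\Gamma$. All memory is absorbed into a single causal operator $\widetilde{\mathbf{T}}\in{\mathbb T}_h^{rm,\tilde q}$ whose Lipschitz and BIBO properties inherit from $\mathbf{T}$ together with the Hurwitz structure of $Q$ in~\eqref{eq:Q}; the disturbance $\tilde d$ collects exponentially decaying contributions of the cascade initial data.

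The principal obstacle, I expect, is the quantitative inductive propagation in the second paragraph above: one must verify that the mismatch $I_m-\Gamma\widetilde\Gamma^{-1}$ does not accumulate over $r-1$ stages in a way that destroys sign-definiteness of the effective gain or allows the gains $k_i$ to blow up. This is exactly where the sharp bound~\eqref{eq:ass-gam} enters. The first term $(\rho-1)/(r-2)$ arises from requiring that the telescoped drift through the cascade stays within the funnel margin afforded by the inflation factor $\rho$, while the second term $\rho/(4\rho^2(\rho+1)^{r-2}-1)$ stems from a Lyapunov-type estimate (based on $P$ solving~\eqref{eq:lyap}) controlling how the pre-compensator error amplifies through each of the $r-1$ stages. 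Tracking these constants explicitly, and ensuring that both conditions can be met simultaneously, is the main technical labour of the proof.
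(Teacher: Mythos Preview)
The paper does not actually prove this theorem: it states the result and attributes it to \cite{Lanz22} (``The following result is taken from~\cite{Lanz22}''), so there is no in-paper proof to compare against. That said, your outline is consistent with the strategy the paper sketches in the surrounding text (the double-integrator example in Section~\ref{Sssec:LinSISO-RD2} and the discussion preceding the theorem): namely, propagate the single-stage pre-compensator estimate through the cascade to obtain~\eqref{eq:obserr-y-z}, then rewrite the coupled system in the form~\eqref{eq:Sys-Conj} by absorbing the internal cascade dynamics into a causal operator~$\widetilde{\mathbf T}$ whose BIBO property is the crux of the matter. Your identification of where~\eqref{eq:ass-gam} enters --- controlling the accumulated gain mismatch $I_m-\Gamma\widetilde\Gamma^{-1}$ over the $r-1$ stages so that neither sign-definiteness nor the $k_i$-bounds are lost --- matches the paper's own commentary in the remark following the corollary. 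For the detailed Lyapunov bookkeeping that yields the specific constants in~\eqref{eq:ass-gam} you would have to consult~\cite{Lanz22}.
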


By virtue of the above result, the funnel controller~\eqref{eq:FC-} may be applied to the conjunction of~\eqref{eq:Sysmeth} and~\eqref{eq:casc-fun-obs-i} with input~$u$ and output~$z := \xi_{{r-1},1}$, i.e., to system~\eqref{eq:Sys-Conj}. For the case of relative degree $r=2$ the resulting controller structure was already discussed in Section~\ref{Sssec:LinSISO-RD2}. In the following we consider the general case.  The additional combination of this controller structure (for the cases $r=2$ and $r=3$) with an open-loop control strategy is discussed in~\cite{BergOtto19} with some applications to underactuated multibody systems.

\begin{cor}
Consider system~\eqref{eq:Sysmeth}--\eqref{eq:Sysmethic} with the notation and assumptions of Theorem~\ref{Thm:LinSys-MP} in force.  Choose a triple $(\alpha,N,\varphi)$ of funnel control design parameters as in~\eqref{eq:fcts-FC} and let $y_{\mathrm{ref}}\in \cW^{r,\infty}(\R_{\ge 0},\R^m)$ be arbitrary.  Assume that, for some $\hat r\in \{1,\ldots,r\}$,
the instantaneous values $y_{\text{\rm ref}}(t),\ldots,y_{\text{\rm ref}}^{(\hat r-1)}(t)$ are known and so, setting $e^{(0)}(t)\equiv e(t):=z(t)-y_{\text{\rm ref}}(t)$,
the vector
\[
{\mathbf{e}}(t)=(e^{(0)}(t),\ldots,e^{(\hat r -1)}(t),z^{(\hat r)}(t),\ldots,z^{(r-1)}(t))
\]
(that is, ~\eqref{eq:fback-quantities} with $y(t)$ replaced by~$z(t)$) is available for feedback. Choose pre-compensator initial data such that
$\varphi(0){\mathbf{e}}(0)\in \cD_r$.
Then the funnel control
\[
u(t)=\big(N\circ \alpha\big)(\|w(t)\|^2) \, w(t),\qquad w(t)=\rho_r\big(2\varphi (t)\mathbf{e}(t)\big)
\]
(corresponding to  \eqref{eq:FC-} with $\varphi$ replaced by $2\varphi$) applied to the augmented system~\eqref{eq:Sys-Conj} yields an initial-value problem which has a solution (in the sense of Carath\'{e}odory), every solution can be maximally extended and every maximal solution
$z:\left[-h,\omega\right)\rightarrow \R^m$ has the properties:
\begin{enumerate}[(i)]
\item
$\omega=\infty$ (global existence);
\item
$u\in\cL^\infty (\R_{\ge0},\R^m)$, $z\in\cW^{r,\infty}([-h,\infty),\R^m)$;
\item
 there exists $\varepsilon_1 \in (0,1)$ such that
$2\varphi(t)\|z(t) - y_{\mathrm{ref}}(t)\|\leq \varepsilon_1$ for all $t\geq 0$.
\end{enumerate}
Moreover, setting $\varphi_1 := 2\rho^{-1}(\rho+r-2)\varphi$ in the pre-compensator, then, by Theorem \ref{Thm:LinSys-MP}, there exists $\eps_2\in (0,1)$
such that
\[
2\varphi(t)\|y(t)-z(t)\|\leq \eps_2,\quad\text{for all}~t\geq 0.
\]
Writing $\eps := \half (\eps_1+\eps_2)$, gives
\begin{enumerate}[(iv)]
\item
$\varphi(t)\|y(t)-y_{\text{\rm ref}}(t)\|\leq \eps\quad\text{for all}~t\geq 0$,
\end{enumerate}
and so the performance objective is achieved.
\end{cor}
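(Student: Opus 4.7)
The plan is to combine Theorems~\ref{Thm:LinSys-MP} and~\ref{Thm:FunCon-Nonl} in a modular way, then close with a triangle inequality. First, I would invoke Theorem~\ref{Thm:LinSys-MP} to re-cast the closed loop formed by the plant~\eqref{eq:Sysmeth} and the pre-compensator cascade~\eqref{eq:casc-fun-obs-i} into the form~\eqref{eq:Sys-Conj}, viewed as a functional differential equation in the surrogate output $z$ with input $u$. It then needs to be verified that the resulting triple $(\tilde d, F, \widetilde{\mathbf{T}})$ lies in the class $\cN^{m,r}$: boundedness of $\tilde d$ and the inclusion $\widetilde{\mathbf{T}}\in \mathbb{T}_h^{rm,\tilde q}$ are implicit in the conclusion of Theorem~\ref{Thm:LinSys-MP}, while property~(NP1) for the affine-in-input map $(\tilde d,\eta,u)\mapsto F(\tilde d,\eta)+\widetilde\Gamma u$ follows from sign-definiteness of $\widetilde\Gamma$ via Proposition~\ref{np1-equiv}.

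Second, I would apply Theorem~\ref{Thm:FunCon-Nonl} to~\eqref{eq:Sys-Conj} with funnel function $2\varphi\in\Phi$, error vector $\mathbf{e}$ built from $z$ and $y_{\mathrm{ref}}$, and the controller~\eqref{eq:FC-} with $\varphi$ replaced by $2\varphi$. The initial-data compatibility $2\varphi(0)\mathbf{e}(0)\in \cD_r$ is automatic whenever $\varphi(0)=0$, and is built into the hypothesis on the pre-compensator initial values otherwise. Theorem~\ref{Thm:FunCon-Nonl} then delivers~(i)--(iii), producing some $\varepsilon_1\in(0,1)$ with $2\varphi(t)\|z(t)-y_{\mathrm{ref}}(t)\|\le\varepsilon_1$ for all $t\ge 0$, together with $u\in\cL^\infty$ and $z\in\cW^{r,\infty}$.

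Third, a direct computation gives $\rho_1\varphi_1 = 2\varphi$ for $\varphi_1 := 2\rho^{-1}(\rho+r-2)\varphi$ and $\rho_1 := \rho/(\rho+r-2)$, so Theorem~\ref{Thm:LinSys-MP} (applied with this $\varphi_1$) yields $\varepsilon_2\in(0,1)$ with $2\varphi(t)\|y(t)-z(t)\|\le \varepsilon_2$ for all $t\ge 0$. Setting $\varepsilon := \half(\varepsilon_1+\varepsilon_2)\in(0,1)$ and invoking the triangle inequality,
\[
\varphi(t)\|y(t)-y_{\mathrm{ref}}(t)\| \le \varphi(t)\|y(t)-z(t)\| + \varphi(t)\|z(t)-y_{\mathrm{ref}}(t)\| \le \tfrac{\varepsilon_1+\varepsilon_2}{2} = \varepsilon
\]
for all $t\ge 0$, which gives~(iv).

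The main obstacle does not lie in this corollary itself, which is essentially a clean composition of previously-established results, but rather upstream in Theorem~\ref{Thm:LinSys-MP}: one must ensure that the pre-compensator cascade, when driven by $(u,y)$ arising from feedback closed around the surrogate output $z$, yields an augmented system whose operator and disturbance data are of the structural form required by class $\cN^{m,r}$, and that the technical smallness condition~\eqref{eq:ass-gam} on $\widetilde\Gamma$ propagates through the cascade. Granted those preparations (already encapsulated in Theorem~\ref{Thm:LinSys-MP}), the remaining work reduces to the bookkeeping of funnel scalings and the one-line triangle estimate displayed above.
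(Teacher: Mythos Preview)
Your proposal is correct and follows exactly the approach the paper takes: the corollary is essentially self-proving, with the argument embedded in its statement via the explicit invocations of Theorem~\ref{Thm:LinSys-MP} (to obtain the $\cN^{m,r}$ representation~\eqref{eq:Sys-Conj} and the bound on $\|y-z\|$), Theorem~\ref{Thm:FunCon-Nonl} (applied with funnel function $2\varphi$ to give (i)--(iii)), and the triangle inequality for (iv). Your verification that $\rho_1\varphi_1 = 2\varphi$ and your remark that membership in $\cN^{m,r}$ hinges on Proposition~\ref{np1-equiv} for the affine-in-$u$ map are the only details the paper leaves implicit.
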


\begin{rem}
The funnel pre-compensator successfully circumvents ``the curse of dimensionality" associated with the filtering approach (as discussed in  Remark~\ref{rem:disc-filter}).
However, the adage ``there ain't no such thing as a free lunch" applies\footnote{In an optimization context, \textit{Wolpert and Macready}~\cite{WolpMacr97,WolpMacr05} paraphrase their concept  of a no-free-lunch theorem  as ``any two algorithms are equivalent when their performance is averaged across all possible problems".}: circumvention of the curse via pre-compensation comes with a price.  First, the system
matrix $\Gamma$ in \eqref{eq:Sysmeth} is required to be symmetric and positive definite (only sign definiteness is required for filtering).  More restrictive is
assumption~\eqref{eq:ass-gam} in Theorem~\ref{Thm:LinSys-MP} which essentially means that the controller matrix~$\widetilde \Gamma$ in the funnel pre-compensators
needs to be ``sufficiently close'' to the (unknown) system matrix~$\Gamma$. How close is specified by the bound on the right-hand side, which becomes tighter as the
relative degree~$r$ increases.  For example, for $r=3$ and~$r=5$, maximizing this bound with respect to the choice of design parameter~$\rho>1$ gives
\[
\|I_m - \Gamma \tilde\Gamma^{-1}\|
\ < \
\begin{cases}
0.117, & \text{if}~r=3\\
0.027,& \text{if}~r=5.
\end{cases}
\]
This indicates that~$\Gamma$ must be known to a high degree of accuracy.
For more comments on the role of assumption~\eqref{eq:ass-gam}  see~\cite[Rem.~3.10]{Lanz22}.
\end{rem}

\section{Input constraints}\label{Sec:input-constraints}\setcounter{equation}{0}
%
\noindent
Up to this point, all exposition and discussion of funnel control has been predicated on an implicit assumption that the input variables
are unconstrained in magnitude.
From a practical point of view, this may be deemed unrealistic.  In most physically-based applications, control inputs are subject to constraints.
Can funnel control accommodate such constraints?  Given that the idea underlying the methodology is that inputs can take
remedial control action of sufficiently large magnitude so as to avoid contact with the funnel boundary, it is clear that some additional feasibility conditions
are mandatory if the inputs are constrained.  Not unexpectedly, such feasibility conditions translate into restrictions on the
initial data, disturbances and reference signals associated with the process to be controlled, and on the underlying
performance funnel.

\subsection{Funnel control with saturation}\label{Ssec:FC-sat}
\noindent
If the vector of control inputs is restricted to take its values in the closed ball ${\mathbb B}_{\widehat u}^m =\{w\in\R^m\,|~\|w\|\leq\widehat u\}$ for
some $\widehat u >0$,  then it is natural to accommodate this input constraint by adopting the saturation function:
\begin{equation}
        \label{eq:sat_mult}
        \satu \colon \R^m \to {\mathbb B}_{\widehat u}^m, \
        v \mapsto \left\{ \begin{array}{ll}
                                                {\widehat  u}\,{\|v\|^{-1}} v, & \|v\| > \widehat  u \\
                                                v, & \text{otherwise.}
                                            \end{array} \right.
\end{equation}
For the purpose of motivation,  consider again the scalar linear prototype~\eqref{abc} with $cb>0$,  but now with input values
constrained to the interval $[-\widehat u,\widehat u]$.
The unconstrained funnel controller~\eqref{eq:FC} is  replaced by  the saturated strategy
\begin{equation}\label{eq:FC-constr}
u(t)  = -\satu (k(t)e(t)),~~\\
k(t)  = \varphi(t)\big(1- (\varphi(t)e(t))^2\big)^{-1}.
\end{equation}
We compare the unconstrained closed-loop system~\eqref{abc},~\eqref{eq:FC}, i.e.,
\begin{equation}
        \label{eq:e-sc-unconst}
\begin{aligned}
\dot e(t) &=  \big(a-cb \ k(t)\big) \, e(t)
+  ay_{\text{\rm ref}}(t)-\dot y_{\text{\rm ref}}(t),~\\
&e(0) = cx^0 -y_{\text{\rm ref}}(0)
\end{aligned}
\end{equation}
with the constrained closed-loop system~\eqref{abc},~\eqref{eq:FC-constr}, i.e.,
\begin{equation}
        \label{eq:e-sc-constr}
\begin{aligned}
\dot e(t) &=  a e(t) - cb \  \satu (k(t)e(t)) +  ay_{\text{\rm ref}}(t)-\dot y_{\text{\rm ref}}(t),\\
e(0) &= cx^0 -y_{\text{\rm ref}}(0).
\end{aligned}
\end{equation}

In either case, the initial data condition
 $\varphi(0)|e(0)| < 1$ (trivially satisfied if $\varphi(0)=0$) is clearly necessary  for attainment of the funnel control objective. However,
 whilst this condition is also sufficient in the unconstrained case, it fails to be so in the constrained case.
    Feasibility of the tracking objective in the presence of input saturation inevitably involves   an interplay between the plant data $(a,b,c,x^0)$, the reference signal~$y_{\rm ref}$, the function $\varphi\in\Phi$ and the saturation level $\widehat u$. For instance, if~$a>0$, then it is readily verified that $a|cx^0|/(cb)\leq \widehat u$ is a necessary condition for feasibility; furthermore, the saturation level~$\widehat u$ should also, loosely speaking, be commensurate with the
 $\cW^{1,\infty}$ norm of the reference signal~$y_{\rm ref}$. To illustrate the interplay between $\widehat u$ and the  funnel function~$\varphi$,
consider the case wherein $a=0$, $y_{\rm ref}(\cdot)=0$ and~$\varphi$ is such that its reciprocal $\psi =1/\varphi$ is a monotonically decreasing, globally Lipschitz function with
Lipschitz constant~$\Lambda$. Assume feasibility of the tracking objective. Then,
\begin{multline*}
\Lambda t\leq\psi(0)-\psi(t)<\psi(0)-y(t)\\
=\psi(0)-y(0)-\int_0^t \dot y(s)\,\dd s< cb\,\widehat u\,t
\end{multline*}
for all $t\ge 0$, and so $cb \, \widehat u\geq \Lambda$ is a necessary condition for feasibility.  This case serves to illustrate that the saturation level must be large enough so that the control
can accommodate local ``steepness" of  the funnel boundary.

For multi-input, multi-output linear systems~\eqref{eq:ABC} of relative degree one,
with $CB + (CB)^\top \succ 0 $ and asymptotically stable zero dynamics, the efficacy of the funnel control \eqref{eq:FC-constr}
 is established in
\textit{Hopfe, Ilchmann,  Ryan}~(2010)~\cite[Thm.\,4.1]{HopfIlch10a}, assuming that  a feasibility inequality holds.
The latter means that~$\widehat u$ must be sufficiently large in terms
of the system data, the initial data, $\varphi$, $y_{\rm \scriptsize ref}$,
$\dot y_{\rm \scriptsize ref}$, and  $\dot\varphi$. This inequality is a conservative bound, but it ensures
feasibility of funnel control.  For the case of componentwise saturation constraints, which requires a componentwise funnel control strategy, see~\cite[Thm.\,4.3]{HopfIlch10a}.

In the highly specialized context of the scalar system~\eqref{abc}, the result of~\cite[Thm.\,4.1]{HopfIlch10a} translates into the following: if
        \begin{multline}\label{eq:feas_scalar}
                \varphi(0)|cx^0-y_{\rm ref}(0)| < 1 \quad \text{and}
                \\
                cb\, \widehat u \geq |a|  \big(\|\psi\|_\infty + \|y_{\rm ref}\|_\infty\big) + \|\dot y_{\rm ref}\|_\infty + \|\dot \psi\|_\infty,
        \end{multline}
         then the simple control strategy~\eqref{eq:FC-constr}  ensures attainment of the tracking objective (and, moreover, the gain function~$k$ is bounded).  Furthermore, if the first inequality in~\eqref{eq:feas_scalar} is replaced by $\varphi(0)|cx^0-y_{\rm ref}(0)| <  \widehat u (1+\widehat u)^{-1}$, then input saturation does not occur and so the control strategy coincides with~\eqref{eq:FC}.

A generalization of the above  to single-input, single-output nonlinear systems is presented in
\textit{Hopfe, Ilchmann,  Ryan} (2010)~\cite{HopfIlch10b}. For single-input, single-output systems of relative degree two,  a variant of funnel control with
(scalar) input saturation is given in
\textit{Hackl,  Hopfe,   Ilchmann,  Mueller,   Trenn} (2013)~\cite[Thm.~3.3]{HackHopf13}.  A treatment of a class of nonlinear systems arising in chemical reactor
models is contained in \textit{Ilchmann and Trenn} (2004)~\cite{IlchTren04}.

\subsection{Bang-bang funnel control}\label{Ssec:bang-bang}
%
\noindent
To treat  constrained scalar-input systems with arbitrary (but known) relative degree, a bang-bang funnel control strategy has also been developed.
This approach avoids the backstepping procedure (cf.\ Section~\ref{Ssec:backstep}) and uses derivative feedback, similar to the funnel control methods discussed in Section~\ref{Ssec:dervi-fb}. However, the
scalar control input switches only between two values and is hence able to respect input constraints.
As is to be expected, the approach requires satisfaction of feasibility assumptions.

The bang-bang funnel controller was first introduced by \textit{Liberzon and Trenn} (2010)~\cite{LibeTren10} for nonlinear systems with relative degree $r\leq 2$  and later generalized to arbitrary relative degree in~\cite{LibeTren13b}. The case of time delays is discussed in~\cite{LibeTren13a} for relative degree two systems.
The systems considered in~\cite{LibeTren13b} are $n$-dimensional, time-invariant, control-affine, disturbance-free,  and are expressible in the form
\begin{align*}
 y^{(r)}(t)&=f(y(t),\dot y(t),\ldots,y^{(r-1)}(t),\eta(t))\\
 &\quad +g(y(t),\dot y(t),\ldots,y^{(r-1)}(t),\eta(t))u(t),
\\
\dot\eta (t)&=h(y(t),\dot y(t),\ldots,y^{(r-1)}(t),\eta(t)),
\end{align*}
where $f$, $g$, $h$ are locally Lipschitz and $g$ is positive valued.
 Temporarily regarding the second of the above subsystems
as an independent $(n-r)$-dimensional system with $\R^r$-valued input $v$, that is, the system $\dot\eta =h(v,\eta)$ with associated flow $\varrho$,  it is assumed that this
system has the {\it bounded-input,bounded-state} property and so, with initial data $\eta (0)=\eta^0$ and continuous input $v\in\cC (\R_{\ge 0},\R^r)$, the
unique solution $\eta (\cdot )=\varrho (\cdot,\eta^0,v)$ is globally defined.  Introducing the operator (more precisely, a family of
operators parameterized by $\eta^0$, but again we suppress this dependency)
\[
{\mathbf T}\colon \cC(\R_{\ge 0},\R^r)\to\cL^\infty_{\text{loc}}(\R_{\ge 0},\R^{n-r}),\ v\mapsto \big(v(\cdot),\varrho (\cdot,\eta^0,v)\big),
\]
the generic system takes the form
\begin{multline}\label{sys:bb}
y^{(r)}(t)=f({\mathbf T}(y,\dot y,\ldots,y^{(r-1)})(t))\\
+ g({\mathbf T}(y,\dot y,\ldots,y^{(r-1)})(t))\,u(t)
\end{multline}
which, in the absence of input constraints,  is a system of class $\cN^{1,r}$ amenable to funnel control.  In the presence of constraints,
the bang-bang funnel controller switches between two values and the control law is given by
\begin{equation}\label{eq:BBFC}
    u(t) = \begin{cases} U^-, & \text{if $q(t)=$ true},\\ U^+, & \text{if $q(t)=$ false}, \end{cases}
\end{equation}
where $U^- < U^+$ and $q:\R_{\ge 0}\to \{\text{true, false}\}$ is the switching signal determined by the switching logic~$\cS$ depending on the error signal. The situation is illustrated in Fig.~\ref{Fig:bangbang}, wherein $\varphi_0,\ldots,\varphi_{r-1}$ are functions defining individual performance funnels for the tracking error and its first $(r-1)$ derivatives.
\begin{figure}[h!tb]
\centering
\resizebox{\columnwidth}{!}{
    \begin{tikzpicture}[draw=black,thick,node distance = 6ex, box/.style={rectangle, draw=black}, blackdot/.style={inner sep = 0, minimum size=3pt,shape=circle,fill=black,draw=black},whitedot/.style={inner sep = 0, minimum size=3pt,shape=circle,draw=black}]
  \node (system) [box] {System~\eqref{sys:bb}};
   \node (yfork) [blackdot,right of = system,xshift=18ex] {};
   \node (y) [right of = yfork] {$y$};
   \node (logic) [box,below of = system,yshift=-2ex] {$\begin{array}{c}\text{switching}\\\text{logic}\end{array}$};
   \node (plus) [circle,inner sep = 0,very thick,draw,below of = yfork,yshift=-2ex] {$+$};
   \node (yref) [right of = plus,xshift=2ex] {$-y_{\rm ref}$};
   \node (funnel) [below of = logic,yshift=-4ex] {$\varphi_0,\ldots,\varphi_{r-1}$};
   \node (Uplus) [box, below of = logic,xshift=-22ex,yshift=-3ex] {$U^+$};
   \node (Uminus) [box, left of = Uplus] {$U^-$};
   \node (Uminusconnect) [whitedot, above of = Uminus] {};
   \node (Uplusconnect) [whitedot, above of = Uplus] {};
   \node (switch) [blackdot,above of=Uplusconnect,xshift=-3ex] {};
   \node (invisible) [left of=logic,xshift=-16ex] {};
   \draw (system) -- (yfork);
   \draw[->] (yfork) -- (y);
   \draw[->] (yfork) -- (plus);
   \draw[->] (yref) -- (plus);
   \draw[->] (plus) -- (logic) node[midway,above] {{$e$, $\dot{e}$, ..., $e^{(r-1)}$}};
   \draw[->] (logic) -- (invisible) node[midway,above] {$q$};
   \draw (Uplus) -- (Uplusconnect);
   \draw (Uminus) -- (Uminusconnect);
   \draw (Uplusconnect) -- (switch);
   \draw[gray!50] (Uminusconnect) -- (switch);
   \draw[->] (switch) |- (system) node[near end,above] {$u$};
   \draw[->] (funnel) -- (logic);
\end{tikzpicture}
}
\caption{Closed-loop system consisting of the bang-bang funnel controller applied to system~\eqref{sys:bb}; taken from~\cite{LibeTren13b}.}
\label{Fig:bangbang}
\end{figure}
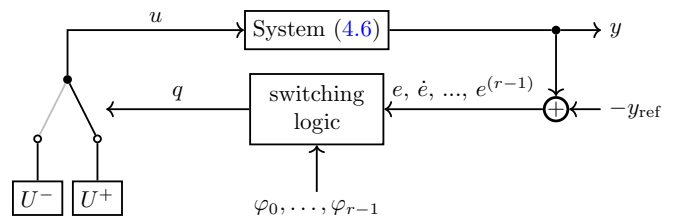
We refer to~\cite{LibeTren13b} for a precise description of the switching logic $\cS: (e, \dot e, \ldots, e^{(r-1)}) \mapsto q$.   Subject to inevitable feasibility conditions,
the closed-loop system has a global solution.  Moreover, the switching signal~$q$ has locally finitely many switches
(and so ``chattering" behaviour does not occur) and the tracking error~$e$ and its derivatives~$\dot e, \ldots, e^{(r-1)}$ evolve within their respective performance
funnels.
\subsection{Funnel control under arbitrary input constraints}\label{Ssec:ICFC}
%
\noindent
In the above described approaches, the saturation level~$\widehat u$ must be sufficiently large in order to ensure feasibility of funnel control under input saturation. The reason for this is the inflexibility of the output constraints, given by the performance funnel for the tracking error. In the recent work~\cite{Berg22pp} a different viewpoint is taken. There, the input constraints are considered to be \textit{hard constraints}, being imposed by the physical limitations of the system. On the other hand, the output constraints are considered to be \textit{soft constraints}, which can be weakened whenever this is inevitable in order to meet the input constraints. To achieve this, a modified control design was proposed, where the funnel boundary (determined by the reciprocal~$\psi(\cdot) = 1/\varphi(\cdot)$) is no longer prescribed for all $t\ge 0$, but it is dynamically generated and becomes part of the controller design. The generating mechanism for $\psi$ is such that the funnel has a prescribed shape (chosen {\it a priori by} the designer) whenever the saturation is not active, that is,
when  $u(t) = -k(t)e(t)$ in the context of~\eqref{eq:e-sc-constr}.
When the saturation is active, the funnel is dynamically ``widened" with the aim of de-activating saturation.  On achieving this aim, the funnel is adjusted to recover
its designed shape exponentially fast.

The idea to readjust the funnel boundary when the input saturation becomes active was already formulated in~\cite{HackJi07b} for relative degree one systems, however the saturation level must still be sufficiently large. The same control design as in~\cite{Berg22pp} was independently developed in~\cite{TrakBech22} for relative degree one systems in the context of prescribed performance control. Higher relative degree systems with input amplitude and rate constraints are considered in~\cite{TrakBech23}. Again, both works~\cite{TrakBech22,TrakBech23} still require sufficiently large saturation levels. In the more recent work~\cite{TrakBech24} this requirement is replaced by an input-to-state stability assumption on the system, which however is often not satisfied in practical applications. While it is further shown in~\cite{TrakBech24} that this assumption can be relaxed to a bounded-input-bounded-state assumption on the internal dynamics (equivalent to property~(TP3)), boundedness of closed-loop signals can then only be guaranteed for those evolving in a certain (unknown) compact set. Neither of those assumptions is required for the approach presented in~\cite{Berg22pp}.

To illustrate the idea of~\cite{Berg22pp}, consider the case  of the scalar system~\eqref{abc} with $cb>0$. Then the \textit{input-constrained funnel controller} is given by
\begin{equation}\label{eq:ICFC}
\hspace*{-1ex}
\boxed{
\begin{aligned}
    e(t) &= y(t) - y_{\rm ref}(t),\ k(t) = \left(1- \frac{\|e(t)\|^2}{\psi(t)^2}\right)^{-1}\\
    \dot \psi(t) &= -\alpha \psi(t) + \beta + \psi(t) \frac{\kappa(v(t))}{\|e(t)\|},\
    \psi(0) = \psi^0\\
    \kappa(v(t)) &= \|v(t)-\satu(v(t))\|,\\
     v(t) &= -k(t) e(t),\quad
    u(t) = \satu(v(t))
\end{aligned}
}
\end{equation}
with the controller design parameters
$\alpha >0$, $  \beta>0$,  $\psi^0>\beta/\alpha$.
The controller essentially consists of a standard funnel strategy appended by the dynamics for the funnel boundary. The idea is that, if the saturation is not active and hence $\kappa(v(t))=0$ on an interval~$[t_0,t_1]$, then the funnel boundary is of the form $\psi(t) = \psi(t_0) e^{-\alpha (t-t_0)} + \frac{\beta}{\alpha} \left(1-e^{-\alpha (t-t_0)}\right)$; if the saturation is active and hence~$\kappa(v(t)) > 0$, then the funnel boundary is widened according to the dynamics of the controller in order to guarantee the input constraints. After a period of saturation, the boundary reverts to its prescribed shape exponentially fast. However, in the absence of precise knowledge of the system to be controlled, a priori bounds on the widening of the funnel cannot be computed. Indeed, in some pathological situations, the funnel widening may be unbounded.

In general,~\cite{Berg22pp} covers nonlinear functional differential equations of relative degree $r\in\N$, satisfying a sector bound property. Additionally, as mentioned above, the high-gain property~(NP1) is not needed and for the internal dynamics only a ``local'' bounded-input bounded-output property is required. The controller~\eqref{eq:ICFC} for the case of~$r>1$ again consists of a version of the relative degree $r$ funnel controller, appended by the dynamics for the funnel boundaries, where the widening effect due to an active saturation propagates from the $r$-th funnel boundary to the first through the dynamic equations.
\section{Applications}\label{Sec:Appls}\setcounter{equation}{0}
%
\noindent
Funnel control has proved a useful tool in various applications in several fields of engineering. Straightforward applications can be found in mechanical engineering, robotics and mechatronics, but also in voltage and current control of electrical circuits or synchronous machines tracking problems are frequently encountered for which funnel control proved to be an appropriate choice. Moreover, funnel control has permeated areas in which its application is less obvious, such as control of chemical reactor processes,
artificial ventilation units and therapy.

In the following subsections we consider the applications for relative degree one systems and systems with higher relative degree separately. In each case we provide an overview of the available applications to the best of our knowledge. Additionally, for illustration purposes we pick one of the applications and discuss in detail that it fits into the respective system class and hence funnel control is feasible.

We note that applications for prescribed performance control~-- the relative of funnel control discussed in Subsection~\ref{Sssec:PPC}~-- can be found in the recent comprehensive survey~\cite{Bu23}.

\subsection{Relative degree one systems}\label{Ssec:RD1}
\noindent
The following applications are available for systems with relative degree one:

\begin{supertabular}{p{160pt}|p{70pt}}
    application & discussed in\\
      \hline\hline\\[-3mm]
      speed control of industrial servo-systems; & \cite{Hack13,HackHofm11,IlchSchu09,SchuHack05} and~\cite[Ch.~11]{Hack17} \\[2mm]
      speed control of wind turbine systems; & \cite{Hack14,Hack15b} and~\cite[Ch.~12]{Hack17} \\[2mm]
      current control of electric synchronous machines; & \cite{Hack15a} and~\cite[Ch.~14]{Hack17} \\[2mm]
      voltage and current control of electrical circuits; & \cite{BergReis14a} \\[2mm]
      power flow control in intermediate DC bus of electrical drives; & \cite{SenfPaug14} \\[2mm]
      temperature control of chemical reactor models; & \cite{IlchTren04} \\[2mm]
      control of peak inspiratory pressure of artificial ventilation units; & \cite{PompWeye15} \\[2mm]
      oxygenation control during artificial ventilation therapy; & \cite{PompAlfo14}\\[2mm]
      adaptive cruise control with guaranteed safety; & \cite{BergRaue18,BergRaue20} \\[2mm]
      synchronization of multi-agent systems; & \cite{LeeBerg21pp} \\[2mm]
       control of the containment of epidemics. & \cite{Berg22} \\
\end{supertabular}

As an example we consider a discretized transmission line~\cite{FlieMart99} (described by a differential-algebraic equation) and show that it is amenable to funnel control; this example is taken from~\cite{BergReis14a}. The discretized transmission line is depicted in Fig.~\ref{Fig:TML}, where $n$ is the number of spacial discretization points.
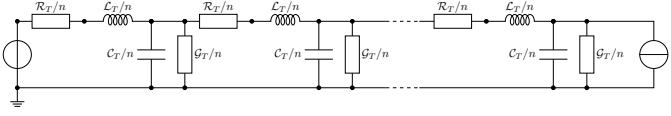
\begin{figure}[hbt]
\centering
\resizebox{\columnwidth}{!}{
\begin{circuitikz}
\draw
    (0,0) to[V,*-*] (0,2)
    to[R=$\cR_T/n$] (2,2)
    to[L=$\cL_T/n$,*-*] (4,2) -- (5,2)
    to[R=$\cR_T/n$,*-*] (7,2)
    to[L=$\cL_T/n$] (9,2) -- (10,2)
    to[*-] (11,2)
    (12,2) to[R=$\cR_T/n$] (14,2)
    to[L=$\cL_T/n$,*-*] (16,2) to[*-] (17,2) -- (19,2)
    to[I] (19,0) -- (17,0)
    to[R, l_=$\cG_T/n$,*-*] (17,2)
    (17,0) -- (16,0) to[C=$\cC_T/n$,*-*] (16,2)
    (16,0) -- (12,0)
    (11,0) -- (10,0) to[R, l_=$\cG_T/n$,*-*] (10,2)
    (10,0) -- (9,0) to[C=$\cC_T/n$,*-*] (9,2)
    (9,0) -- (5,0) to[R, l_=$\cG_T/n$,*-*] (5,2)
    (5,0) -- (4,0) to[C=$\cC_T/n$,*-*] (4,2)
    (4,0) -- (0,0)
    (0,0) node[ground] {} -- (0,0);
    \draw[dashed] (11,0) -- (12,0);
    \draw[dashed] (11,2) -- (12,2);
\end{circuitikz}
}
\caption{Discretized transmission line; taken from~\cite{BergReis14a}.}
\label{Fig:TML}
\end{figure}
Using modified nodal analysis (MNA), see~\cite{HoRueh75} and the survey~\cite{Reis14}, we may obtain a model of the circuit which is described by a linear differential-algebraic equation of the form~\eqref{eq:EABC}, where
\begin{multline*}
sE-A
=\begin{bmatrix}s\mA_{\cC}\cC \mA_{\cC}^\top +\mA_{\cR}\cG \mA_{\cR}^\top &\mA_{\cL}&\mA_{\cV}\\-\mA_{\cL}^\top &s\cL&0\\-\mA_{\cV}^\top &0&0\end{bmatrix},\\
B=C^\top =\begin{bmatrix}-\mA_{\cI}&0\\0&0\\0&-I_{n_{\cV}}\end{bmatrix},\label{eq:circpen}
\end{multline*}
\begin{equation*}\label{eq:circvar}
    x=(\eta^\top , i_{\cL}^\top , i_{\cV}^\top )^\top ,\quad u=(i_{\cI}^\top , v_{\cV}^\top )^\top ,\quad y=(-v_{\cI}^\top , -i_{\cV}^\top )^\top ,
\end{equation*}
and
\begin{equation*}\label{eq:circmatr}
\left.
\begin{array}{l}
 \cC\in\R^{n_{\cC}\times n_{\cC}}, \cG\in\R^{n_{\cG}\times n_{\cG}}, \cL\in\R^{n_{\cL}\times n_{\cL}},\\[2mm]
 \mA_{\cC}\in\R^{n_e\times n_{\cC}}, \mA_{\cR}\in\R^{n_e\times n_{\cG}}, \mA_{\cL}\in\R^{n_e\times n_{\cL}},\\[2mm]
  \mA_{\cV}\in\R^{n_e\times n_{\cV}}, \mA_{\cI}\in\R^{n_e\times n_{\cI}},\\[2mm]
 n=n_e+n_{\cL} + n_{\cV},\quad m=n_{\cI}+n_{\cV}.
\end{array}
\ \right\}
\end{equation*}
Here $\mA_{\cC}$, $\mA_{\cR}$, $\mA_{\cL}$, $\mA_{\cV}$ and $\mA_{\cI}$ denote the element-related incidence matrices, $\eta(t)$ is the vector of node potentials, $i_{\cL}(t)$, $i_{\cV}(t)$, $i_{\cI}(t)$ are the vectors of currents through inductances, voltage and current sources, $v_{\cV}(t)$, $v_{\cI}(t)$ are the voltages of voltage and current sources,
and $\cC$, $\cG$ and $\cL$ are the matrices expressing the constitutive relations of capacitances, resistances and inductances.
In \cite[Prop.~7.4]{BergReis14a}, it is shown that this system has asymptotically stable zero dynamics and so Theorem~\ref{Thm:DAE-funnel} is applicable to
conclude feasibility of  funnel control. For simulations of various scenarios,  we refer to~\cite{BergReis14a}.

\subsection{Higher relative degree systems}\label{Ssec:HRD}
\noindent
The following applications are available for systems with higher relative degree:\\

\begin{supertabular}{p{160pt}|p{70pt}}
    application & discussed in\\
      \hline\hline\\[-3mm]
      position control of industrial servo-systems; & \cite{Hack11,HackHofm11,HackHopf13} and~\cite[Ch.~11]{Hack17} \\[2mm]
      joint position control of rigid-link revolute-joint robotic manipulators; & \cite{HackKenn12,BergLanz21,BergLe18,BergOtto19} and~\cite[Ch.~13]{Hack17} \\[2mm]
      position control for a robotic manipulator with kinematic loop; & \cite{BergDrue21}\\[2mm]
      force control for a mass on car system; & \cite{BergLe18,BergOtto19} \\[2mm]
permanent magnet synchronous motor service system; & \cite{ChenTang17} \\[2mm]
      oxygenation control in artificial ventilation therapy .& \cite{PompAlfo14} \\
\end{supertabular}
\vspace{2mm}

As an example we consider a robotic manipulator from~\cite{HackKenn12}, see also~\cite[Ch.~13]{Hack17} and~\cite[p.~77]{KellDavi05}, as depicted in Fig.~\ref{Fig:robot-model}. The robotic manipulator is planar, rigid, with revolute joints and has two degrees of freedom.

\begin{figure}[h!tb]
\captionsetup[subfloat]{labelformat=empty}
%
%
%
\begin{center}
\begin{tikzpicture}[x=4mm, y=4mm,thick,node distance = 12ex, box/.style={fill=white,rectangle, draw=black}, blackdot/.style={inner sep = 0, minimum size=3pt,shape=circle,fill,draw=black},plus/.style={fill=white,circle,inner
sep = 0,thick,draw},metabox/.style={inner sep = 3ex,rectangle,draw,dashed,fill=gray!20!white}]
\tikzset{>=latex}
\tikzset{every path/.append style={line width=0.8pt}}

    \fill[gray] (7,0) rectangle (10,-0.5);
    \draw (7,0) rectangle (10,-0.5);

    \draw (7,0) -- (8.5,0.5);
    \draw (8.5,0.5) circle (0.2);
    \draw[->] (8.8,1.25) arc (70:270:0.8);
    \node[left] at (7.8,0.7) {$u_1$};

    \draw (8.5,0.5) -- (10,0);
    \draw (8.5,0.5) -- (15,5);
    \node[left] at (11.5,3) {$l_1$};
    \fill (15,5) circle (0.2);
    \node[right] at (15.5,5) {$m_1$};

    \draw[->] (15.3,5.7) arc (70:290:0.8);
    \node[left] at (14.1,5) {$u_2$};

    \draw[dashed] (8.5,0.5) -- (13,0.5);

    \draw[dashed,->] (11.5,0.5) arc (0:37:2.8);
    \node[right] at (9.7,1) {$y_1$};

    \draw (15,5) -- (12,8);
    \node[left] at (13.2,6.8) {$l_2$};
    \fill (12,8) circle (0.2);
    \node[left] at (11.8,8) {$m_2$};

    \draw[dashed] (15,5) -- (17.6,6.8);

    \draw[dashed,->] (16.7,6.5) arc (45:140:2);
    \node[right] at (14.6,6.4) {$y_2$};
\end{tikzpicture}
\end{center}
\vspace{-3mm}
\caption{Planar rigid revolute joint robotic manipulator; taken from~\cite{BergLe18}.}
\label{Fig:robot-model}
\end{figure}
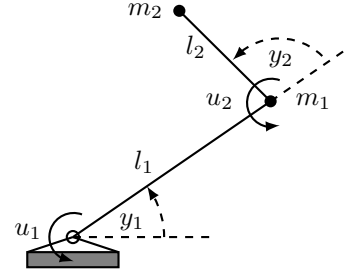

The two joints are actuated by~$u_1$ and~$u_2$. We assume that the links are massless, have lengths~$l_1$ and~$l_2$, and point masses~$m_1$ and~$m_2$
are attached to their ends. The two outputs are the joint angles~$y_1$ and~$y_2$  and the equations of motion are given by (see also~\cite[pp.~259]{SponHutc06})
\begin{equation}\label{eq:robot}
M(y(t))\ddot{y}(t)+C(y(t),\dot{y}(t))\dot{y}(t)+G(y(t))=u(t)
\end{equation}
with initial value $(y(0),\dot{y}(0))=\left(0,0\right)$, inertia matrix $M:\R^2\to \R^{2\times 2},$
\begin{align*}
&M(y_1,y_2)\\
&:=\!
\begin{bmatrix}
m_1l_1^2\!+\!m_2(l_1^2\!+\!l_2^2\!+\!2l_1l_2\cos(y_2)) & m_2(l_2^2\!+\!l_1l_2\cos(y_2))\\
m_2(l_2^2\!+\!l_1l_2\cos(y_2)) & m_2l_2^2\\
\end{bmatrix}
\end{align*}
centrifugal and Coriolis force matrix $C:\R^2\times \R^2\to \R^{2 \times 2}$,
\begin{align*}
 &C(y_1,y_2,v_1,v_2)\\
 &:=
\begin{bmatrix}
-2m_2l_1l_2\sin(y_2)v_1 & -m_2l_1l_2\sin(y_2)v_2\\
-m_2l_1l_2\sin(y_2)v_1 & 0\\
\end{bmatrix},
\end{align*}
and gravity vector $G:\R^2\to \R^2$,
\begin{align*}
&G(y_1,y_2)\\
&:=
g \begin{pmatrix}
m_1l_1\cos(y_1)+m_2(l_1\cos(y_1)+l_2\cos(y_1+y_2)) \\
m_2l_2\cos(y_1+y_2), \\
\end{pmatrix}
\end{align*}
where $g$ is the acceleration of gravity. If we right- multiply system~\eqref{eq:robot} with $M(y(t))^{-1}$, which is pointwise positive definite, we see that the resulting system belongs to the class~\eqref{eq:nonlSys} with $r=m=2$. Therefore,
Theorem~\ref{Thm:FunCon-Nonl} yields that funnel control is feasible.

In the following, we present a simulation of the controllers~\eqref{eq:FC-} and~\eqref{eq:fun-con} for this example to illustrate their performance. This simulation is taken from~\cite{BergIlch21}. We choose the parameters~$m_1=m_2=1$, $l_1=l_2=1$ and the reference signal $y_{\rm ref}\colon t\mapsto (\sin t,\,\sin 2t)$. We choose $\varphi(t) = (4e^{-2t}+0.1)^{-1} = \varphi_0((t) = \varphi_1(t)$ for $t\ge 0$, as well as $\alpha(s) = 1/(1-s)$ for~$s\in[0,1)$ and, in view of Remark~\ref{Rem:N(s)=-s}, $N(s) = -s$ for $s\ge 0$. The simulation of the controllers~\eqref{eq:FC-} and~\eqref{eq:fun-con} applied to~\eqref{eq:robot} over the time interval $[0,10]$ is MATLAB generated (solver: {\tt ode45}, rel.\ tol.: $10^{-10}$, abs.\ tol.: $10^{-10}$) and depicted in Fig.~\ref{fig:robot}. It can be seen that for this example both controllers exhibit a nearly identical performance.
\begin{figure}
  \centering
  \subfloat[Fig.~\ref{fig:robot}a: Funnel and first tracking error components]
{
\centering
\hspace{-2mm}
  \includegraphics[width=0.47\textwidth]{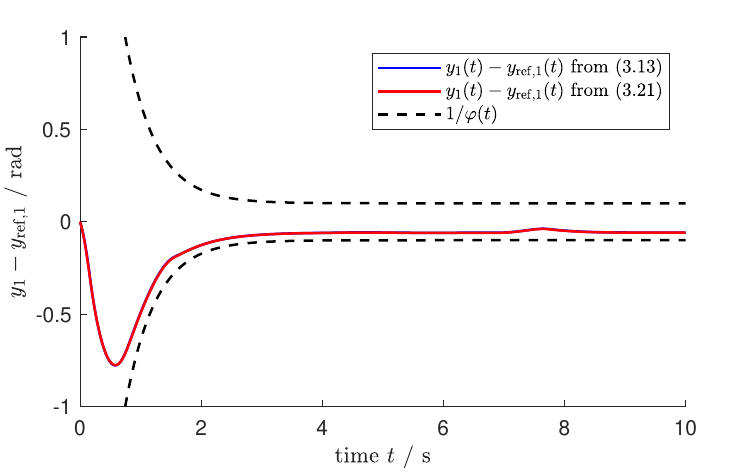}
\label{fig:robot-error1}
}\\
\subfloat[Fig.~\ref{fig:robot}b: Funnel and second tracking error com-\newline ponents]
{
\centering
\hspace{-2mm}
  \includegraphics[width=0.47\textwidth]{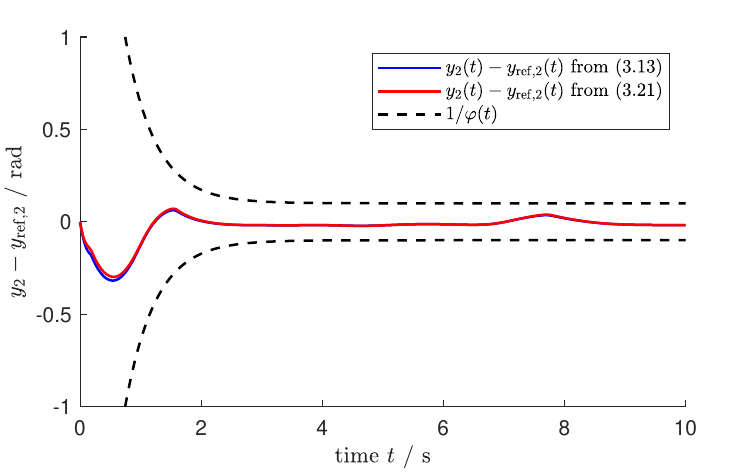}
\label{fig:robot-error2}
}\\
\subfloat[Fig.~\ref{fig:robot}c: First input components]
{
\centering
\hspace{-2mm}
  \includegraphics[width=0.47\textwidth]{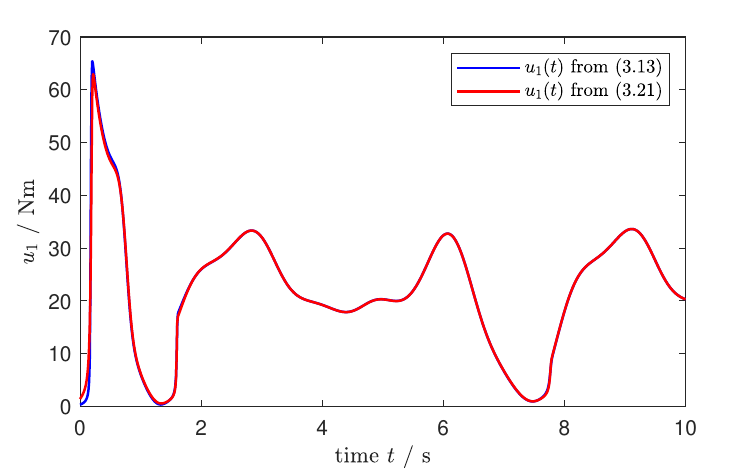}
\label{fig:robot-input1}
}\\
\subfloat[Fig.~\ref{fig:robot}d: Second input components]
{
\centering
\hspace{-2mm}
  \includegraphics[width=0.47\textwidth]{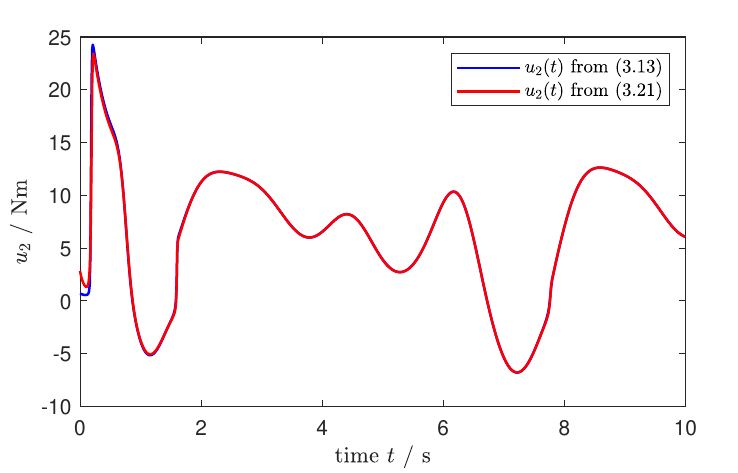}
\label{fig:robot-input2}
}
\caption{Simulation of the controllers~\eqref{eq:FC-} and~\eqref{eq:fun-con} applied to~\eqref{eq:robot}; taken from~\cite{BergIlch21} and re-generated with color.}
\label{fig:robot}
\end{figure}
For simulations of various additional scenarios and corresponding figures we refer to the
works~\cite{BergIlch21,BergLe18}.

\subsection{Systems with partial differential equations}\label{Ssec:Appl-PDE}
\noindent
The following applications are available for systems containing partial differential equations:\\

\begin{supertabular}{p{170pt}|p{60pt}}
    application & discussed in\\
      \hline\hline\\[-3mm]
      boundary control of heat propagation problems; & \cite{ReisSeli15b}\\[2mm]
      control of a lossy transmission line; & \cite{PuchReis21}\\[2mm]
      mean value control of molecular systems; & \cite{BergNues22pp} \\[2mm]
      control of defibrillation processes for the human heart; & \cite{BergBrei21}\\[2mm]
      force control for a moving water tank.& \cite{BergPuch22}\\
\end{supertabular}
%
\noindent
As an example we consider the moving water tank system from~\cite{BergPuch22}, which is depicted in Fig.~\ref{Fig:Tank}.

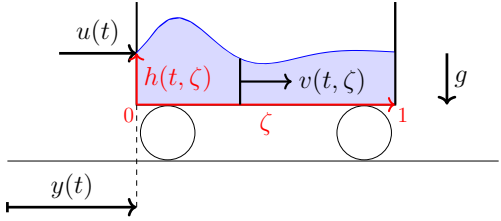
\begin{figure}[h!t]
  \centering
\resizebox{0.75\columnwidth}{!}{
\begin{tikzpicture}

\draw (0.6,-0.55) circle (15pt); \draw (4.4,-0.55) circle (15pt);
\draw[line width=1.1pt] (0,2) -- (0,0) -- (5,0) -- (5,2);

\draw[->,line width=1.5pt] (-1.5,1) -- (0,1)  node[midway, above]{\Large $u(t)$};

\draw[dashed] (0,0) -- (0,-2);
\draw[->,line width=1.5pt] (-2.5,-2) -- (0,-2)  node[midway, above]{\Large $y(t)$};
\draw[line width=1.5pt] (-2.5,-1.9) -- (-2.5,-2.1);

\draw[->,line width=1.5pt] (6,1) -- (6,0) node[midway, right]{\Large $g$};

\draw (-2.5,-1.1) -- (7,-1.1);

\draw[line width=1.1pt, domain=0:5,smooth,variable=\x,blue] plot ({\x},{1*sqrt(3*\x)*sin(100*\x)*exp(-1*\x)+1});
\fill[color=blue!15,domain=0:5] (0,0)-- plot (\x,{1*sqrt(3*\x)*sin(100*\x)*exp(-1*\x)+1})--(5,0)-- (0,0);

\draw[red,line width=1.1pt,->] (0,0) -- (0,1) node[midway, right]{\Large $h(t,\zeta)$};
\draw[red,line width=1.1pt,->] (0,0) -- (5,0) node[midway, below]{\Large $\zeta$};
\node at (-0.15,-0.2) {\large \color{red} 0};
\node at (5.15,-0.2) {\large \color{red} 1};

\draw[line width=1.2pt] (2,0) -- (2,0.9);
\draw[->,line width=1.2pt] (2,0.45) -- (3,0.45) node[pos=1, right]{\Large $v(t,\zeta)$};
\draw[line width=1.1pt] (5,0) -- (5,2);
\end{tikzpicture}
}
\caption{Horizontal movement of a water tank; taken from~\cite{BergPuch22}.}
\label{Fig:Tank}
\end{figure}

We neglect the wheels' inertia and friction between the wheels and the ground, and assume that there is an external force acting on the water tank, denoted by $u(t)$. The measurement output is the horizontal position~$y(t)$ of the water tank, and the mass of the empty tank is denoted by~$m$. The dynamics of the water
can be described by the {\it Saint-Venant equations}, cf.~\cite{Sain71}, as
\begin{equation}\label{eq:SVeq}
\begin{aligned}
  \partial_t h + \partial_\zeta(h v)= 0,\ \
  \partial_t v +  \partial_\zeta \left( \frac{v^2}{2} + gh\right)+hS\left(\frac{v}{h}\right)= - \ddot y
\end{aligned}
\end{equation}
with  boundary conditions $v(t,0) = v(t,1) = 0$ and friction term~$S:\R\rightarrow\R$. Here $h:\R_{\ge 0}\times [0,1]\to \R$ denotes the height profile and $v:\R_{\ge 0}\times [0,1]\to \R$ the (relative) horizontal velocity profile, where the length of the container is normalized to~$1$. As in~\cite{BergPuch22} we use a linearized version of these equations:
\begin{equation}\label{eq:SVlin}
    \partial_t z = Az +b \ddot y = -\begin{bmatrix}0&h_0 \partial_\zeta\\g\partial_\zeta &2\mu\end{bmatrix}z+\begin{pmatrix}0\\-1\end{pmatrix}\ddot{y},
\end{equation}
with boundary conditions $z_2(t,0) = z_2(t,1) = 0$, $b=(0,-1)^\top$ and friction coefficient
$\mu = \tfrac12 S'(0) > 0$. The state space in which $z(t)$ evolves is $X=L^2([0,1];\R^2)$
and $A: \cD(A) \subseteq  X \to X$,
\begin{equation}\label{eq:domA}
    \cD(A) = \setdef{(z_1,z_2)\in X}{ \!\!\begin{array}{l} z_1,z_2\in W^{1,2}([0,1];\R),\\ z_2(0) = z_2(1) = 0\end{array}\!\!\!}.
\end{equation}
By conservation of mass in  \eqref{eq:SVlin}, $\int_{0}^{1}z_{1}(t,\zeta)\mathrm{d}\zeta=h_{0}$ for all $t\ge0$.
The model is completed by the momentum
\begin{equation}\label{eq:momentum}
    p(t) := m \dot y(t) + \int_0^1 z_1(t,\zeta) \big(z_2(t,\zeta) + \dot y(t)\big) \ds{\zeta},\ t\ge0.
\end{equation}
Substituting the absolute velocity $x_2=z_{2}+\dot{y}$ for $z_2$, $x_{1}=z_{1}$ and using the balance law $\dot p(t) = u(t)$ and~\eqref{eq:SVlin} we obtain the nonlinear model on the state space $X$:
\begin{subequations}\label{eq:InpOut}
\begin{align}\label{eq:InpOutlin}
	  \partial_t x &= A(x+b\dot{y}) \\
 \!  m \ddot y(t) &= \tfrac{g}{2}  x_1(t,\cdot)^{2}|_{0}^{1}\!+\!{2\mu}\langle x_{1}(t),x_{2}(t)\rangle-2\mu h_0\dot{y}(t)+{u(t)} \label{eq:InpOutlin2}
\end{align}
\end{subequations}
where $\langle f,g\rangle=\int_{0}^{1}f(s)g(s)\mathrm{d}s$. This system can be written as
\begin{align}%
\ddot{y}(t)&=\fT(\dot y)(t)+\frac{u(t)}{m},\label{eq:Teq1}
\end{align}
where the operator $\fT$ is formally given by
\begin{align*}
\fT(\eta)(t)={}&\frac{g}{2m}  x_1(t,\cdot)^{2}|_{0}^{1}  + \frac{2\mu}{m} \big( \langle x_{1}(t),x_{2}(t)\rangle - h_0\eta(t)\big)\notag
\end{align*}
with $x$ being the strong solution of
\begin{equation*}
  \dot{x}(t)= A\big(x(t)+ b\eta(t)\big), \quad x(0)=x_0.\label{eq:Teq0}
\end{equation*}
It is then shown in~\cite{BergPuch22} that $\fT\in {\mathbb T}^{2,1}_0$ and hence~\eqref{eq:Teq1} belongs to the class $\cN^{1,2}$, thus
Theorem~\ref{Thm:FunCon-Nonl} yields that funnel control is feasible. For simulations and corresponding figures we refer to~\cite{BergPuch22}.

%
\section{Future research and open problems}\label{Sec:Fut-Res-Open_problems}
%
\subsection{Model predictive control~(MPC)}\label{Ssec:MPC}
\noindent
MPC is a well-established control technique which relies on the iterative solution of optimal control problems~(OCPs), see the textbooks~\cite{GrunPann17,RawlMayn20}. Recently,~\cite{BergDenn22a,BergDenn22b,BergKast19}
have introduced funnel-like ideas to overcome some limitations in~MPC. The latter means that  ``artificial'' assumptions are imposed to find  an initially feasible solution and to ensure recursive feasibility  of~MPC (i.e., solvability of the~OCP at a particular time instant automatically implies solvability of the~OCP at the successor time instant). It was shown that these assumptions are superfluous  when ``funnel-like'' stage costs are introduced so that the costs grow unbounded when the tracking error approaches the funnel boundary. More precisely, in contrast to simply adding the constraints on the tracking error to the~OCP with standard quadratic stage costs, funnel~MPC is initially and recursively feasible, without imposing state constraints or terminal conditions and independent of the length of the prediction horizon. These results hold for a large class of nonlinear multi-input multi-output systems with relative degree one, very similar to the class~${\mathcal N}^{m,1}$, as shown in~\cite{BergDenn22a}. The applicability of the method has been demonstrated for a magnetic levitation system in~\cite{OppeLanz24}. Utilizing so called feasibility constraints and extending the stage costs by additional terms (similar to the gain functions in~\eqref{eq:fun-con}), applicability of funnel~MPC to systems with arbitrary relative degree was shown in~\cite{BergDenn22b}. However, the parameters involved in the feasibility constraints are very hard to determine and usually conservative estimates must be used. But then again, initial and recursive feasibility cannot be guaranteed. Furthermore, the stage cost function used in~\cite{BergDenn22b} is rather complex and (together with the feasibility constraints in the optimal control problem) leads to an increased computational effort. These drawbacks have been resolved in the recent work~\cite{BergDenn23bpp}, where a modified and simple stage cost is used and the feasibility constraints are avoided.
In~\cite{BergDenn23pp} the combination of funnel~MPC with an additional funnel control feedback loop was investigated, and it was shown that this leads to a control scheme which achieves the tracking objective even in case of severe model-plant mismatches. This resolves another limitation of classical~MPC: it requires a sufficiently accurate model to predict the system behaviour and compute the optimal control in each step. In the approach from~\cite{BergDenn23pp},  funnel~MPC is safeguarded by the additional funnel controller, to guarantee the evolution of the tracking error within the funnel boundaries. Another extension of this approach is presented in~\cite{LanzBerg23pp}, where a framework to improve the model by learning its parameters from data is introduced, while it is still safeguarded by the funnel controller component.
A limitation of the approach is that the learning scheme must guarantee that the ``improved model'' is again a member of the considered class of models, which, so far, is only clear for simple learning algorithms restricted to linear models. Furthermore, the extension to arbitrary relative degree is an open problem.
%

\subsection{Partial differential equations}\label{Ssec:PDEs}
\noindent
In the context of systems containing partial differential equations, an important challenge is the treatment of
systems with inputs and outputs which are  \emph{not co-located},
that means the actuators and sensors are not placed at the same position.
Note that all boundary control systems considered  in Section~\ref{Ssec:Inf-hard}, as e.g.\ the heat equation~\eqref{eq:heateq}, have co-located inputs and outputs.
In the following we describe two prototypical examples of
systems  in one spatial variable  which illustrate the more realistic situation where the input and output are not co-located.

 First, consider the  wave equation
\begin{equation}\label{eq:wave}
\hspace*{-1ex}
\begin{aligned}
&\partial^2_t x(\xi,t)=c^2\, \partial^2_\xi x(\xi,t),\quad (\xi,t)\in [0,\ell]\times\R_{>0},\\
&u(t)=\partial_\xi x(0,t), \ y(t)=\partial_\xi x(\ell,t), \
0=x(\ell,t),\  t >0.
\end{aligned}
\end{equation}
This equation describes
 a~vibrating string of length~$\ell$, where the input and output consist of a~scaled force at the left and right hand side, resp.
 Furthermore, the boundary condition~$0=x(\ell,t)$ means that the right hand side of the string is clamped.
The application of an input causes a~wave which is travelling with speed $c>0$ to the right hand side, where it is reflected.
Consequently,  any input action influences the output with a delay of $\tau =\tfrac{\ell}c$. A standard funnel controller is not able to deal with this behaviour, since a ``bad choice'' of the reference signal and funnel boundary may potentially drive the tracking error outside the performance funnel within the time interval~$[0,\tau]$, without the control being able to counteract.

As a second model problem, consider the heat equation
\begin{equation}\label{eq:heat}
\begin{aligned}
\hspace*{-1ex}
&\partial_t x(\xi,t)=k\, \partial^2_\xi x(\xi,t),\quad (\xi,t)\in [0,\ell]\times\R_{>0},\\
&u(t)=\partial_\xi x(0,t),\
y(t)=x(\ell,t),\
0=\partial_\xi x(\ell,t),\  t >0
\end{aligned}
\end{equation}
with $k>0$, and boundary control formed by the temperature flux at the left hand side. Then the output is given by the temperature at the right hand side, and the condition $0=\tfrac{\partial}{\partial \xi}x(\ell,t)$ describes a~perfect isolation at the right hand side. In contrast to the wave equation, the problem of a delayed control action does not
occur here.
However, the diffusive effect implies that the output~$y:\R_{>0}\to\R$ is infinitely smooth, regardless of a possibly non-smooth~$u\in L^\infty(\R_{>0})$. In a~certain sense, this corresponds to an infinite relative degree. Inspection of the zero dynamics, that is,~\eqref{eq:heat} under the additional condition~$y=0$,  results in an  equation with Neumann and Dirichlet  boundary values  at the same
part of the boundary, which is not well-posed.  Also for this example, standard funnel control is not feasible in general.

The above issues suggest~-- for completely different reasons~--  that standard funnel control does not achieve the objective of tracking with prescribed performance of the tracking error for the systems~\eqref{eq:wave} and~\eqref{eq:heat}.
Suitable modifications of the funnel controller and, probably, additional (smoothness, quantitative) assumptions on the funnel boundary~$\varphi$ and the
reference signal~$y_{\rm ref}$ warrant further investigation.

\subsection{Control barrier functions (CBFs)}

The objective of CBFs, originally introduced in~\cite{wieland2007constructive}, is closely related to that of funnel control, i.e., to provide a means to guarantee the satisfaction of safety-critical constraints. Due to their straightforward design and modularity, CBFs have seen widespread use, particularly in robotics, see~\cite{ames2016control,ames2019control}. The control input under CBF-based schemes is typically obtained by solving a quadratic program, which enforces a desired bound on the derivative of the barrier function. This procedure ensures the positive invariance and asymptotic stability of a safe set. However, implementing CBF-based control necessitates a model capturing the nonlinear system dynamics and accurate state measurements, with the overall implementation complexity increasing alongside the model's dimension. These challenges have been partially addressed in the literature by considering robustness to bounded disturbances~\cite{jankovic2018robust,gurriet2018towards}, handling sector nonlinearities~\cite{buch2021robust}, and accounting for state estimation errors~\cite{dean2021guaranteeing}; moreover, input-to-state safety has been studied~\cite{alan2023control,kolathaya2018input}. Recent approaches have also focused on learning the necessary model information directly from data~\cite{taylor2020learning,dhiman2021control}, though this tends to further increase the complexity of implementation. To address the complexity issue, \cite{cohen2024safety} explores the use of simplified reduced-order models. Additionally, for a particular class of reduced-order kinematic models,~\cite{molnar2021model} proposes a model-free CBF-based controller that requires only minimal knowledge of the system's dynamics. Recently, funnel control and prescribed performance control have been utilized to develop model-free CBF-based feedback laws, see~\cite{namerikawa2024equivalence} for prescribed performance control and~\cite{LanzKoeh25} for funnel control. However, these works are restricted to systems with relative degree one and future work should concentrate on the extension to higher relative degrees and other system classes.

\subsection{Other open problems}\label{Ssec:div}
\noindent
\paragraph{Systems with unstable zero dynamics}
Recently, funnel control for systems which do not have asymptotically stable zero dynamics has been investigated. First results on funnel control for systems which are not minimum phase are given in~\cite{Berg20} for uncertain linear systems and in~\cite{BergLanz21} for a nonlinear robotic manipulator. Further research is necessary to extend the results to general nonlinear systems.

\paragraph{Sampled-data funnel control}
Recently, \textit{Lanza et al.}~\cite{LanzDenn23} have investigated funnel control for sampled-data systems with zero-order hold, showing that for a sampling rate below a certain threshold (depending on the system parameters, the reference signal and the funnel boundary) the tracking error evolves within the prescribed performance funnel~-- also between the sampling instances. This result even covers the general class~$\cN^{m,r}$ of nonlinear systems with arbitrary relative degree. However, the estimates for the sampling rate threshold are quite conservative. In another recent work~\cite{ChenRen23}, \textit{Cheng et al.} develop a funnel control design for discrete-time nonlinear systems. However, their approach is based on sliding-mode control methods and parameter estimations, which lead to a high controller complexity. Future research should focus on relaxing the limitations of both approaches.

\paragraph{Funnel cruise control}
 \textit{Berger and Rauert}~\cite{BergRaue20} have developed a ``funnel cruise controller'' as a universal adaptive cruise control mechanism for vehicle following which guarantees that a safety distance to the leader vehicle is never violated. A complementary approach, which also takes input constraints into account, has been developed by \textit{Trakas et al.} in the recent work~\cite{TrakModu25}. Based on the funnel cruise controller, a decentralized controller which achieves string stability of vehicle platoons has been introduced in the recent work~\cite{BergBess24}. Open problems are the treatment of acceleration constraints and the investigation of synchronization behaviour.

\paragraph{Funnel control with internal models}
There are contributions on funnel control in combination with
internal models~-- i.e., models of the exogenous signals such as reference signals or disturbances, cf.~\cite{Wonh85}.
It is shown in~\cite{IlchRyan06a} for linear systems with relative degree one that this combination achieves asymptotic tracking. In~\cite[Ch.~7~\&~10]{Hack17} it is shown that
this control is also efficient in the presence of measurement noise: the tracking error does not ``follow'' the noise and hence it does not get close to the funnel boundary and, as a consequence, the gain function does not attain unnecessary large values. In the end, the incorporation of internal models leads to an increased level of robustness of the overall controller design and, from an applications point of view, implementation of funnel control in real-world systems without internal models is challenging. {First results for asymptotic tracking by funnel control with internal models has been reported for linear systems with arbitrary relative degree in~\cite{BergHack24}. However, the treatment of measurement noise and nonlinear systems remain open problems.}

\paragraph{Robustness in the gap metric}
Robustness of adaptive controllers has been an issue since the 1980s, see e.g.~\cite{IoanKoko84,RohrVala85}. So-called universal adaptive controllers, including the funnel controller,
satisfy the desired control objective for  a whole \textit{class} of systems.
In this sense, these controllers are already robust.
However, an issue still remains as to whether  the controller continues to maintain
performance if a system of the underlying class
is subjected to perturbations,for example to unmodelled dynamics, which take it outside the class.
One established tool to quantify robustness is the {{\it gap metric}}, with which
the distance between two systems is measured as the ``gap" separating  their corresponding graphs.
Robustness in the gap metric of the classical high-gain adaptive controller~\eqref{vark}, \eqref{kdot} was studied in
\textit{French, Ilchmann, and Ryan} (2006) \cite{FrenIlch06}.
In~\cite{IlchMuel09b} it is shown that the funnel controller~\eqref{eq:FC}
 applied to a linear system~\eqref{eq:ABC} is robust in the following sense:
it may be applied to a system not satisfying any of
the classical conditions of relative degree one,  known sign of the high-frequency gain and asymptotically
stable zero dynamics
as long as the initial conditions and
the disturbances are ``small'' and the system is ``close'' (in terms of a
``small'' gap) to a system satisfying the classical conditions.
An extension of this result to systems with relative degree two is derived in~\cite{HackHopf13}. It is an open problem as to whether similar gap metric results hold for funnel control
for higher relative degree nonlinear and/or differential-algebraic systems.

\paragraph{Fault tolerant funnel control}
 \textit{Berger}~\cite{Berg21b} has recently developed a fault tolerant funnel control mechanism for nonlinearly perturbed linear systems. The method utilizes the Byrnes-Isidori form for time-varying linear systems from~\cite{IlchMuel07}. The extension to fully nonlinear systems is a topic of future research. A first approach in this direction has been presented by \textit{Zhang et al.}~\cite{ZhanDing25}.

\paragraph{Funnel control versus prescribed performance control}
Prescribed performance control (see Subsection~\ref{Sssec:PPC}) and funnel control
are closely related. A thorough comparison of the complexity of the controllers
and the assumptions on the system class is still missing.
This may lead to   new controllers with
less complexity which work for a larger class of systems.

\section*{Acknowledgments}
We are indebted to our colleagues
Christoph M.\ Hackl (Hochschule München),
Lukas Lanza (Technische Universität Ilmenau),
Timo Reis (Technische Universität Ilmenau) and
George A.\ Rovithakis (Aristotle University of Thessaloniki)
 for their constructive comments.

\small
\bibliographystyle{elsarticle-harv}

\end{document}